\documentclass[a4paper,reqno]{amsart}

\usepackage{amsmath,amsthm,amssymb,amsfonts,amsbsy}
\usepackage{mathtools}      
\usepackage{mathabx}        
\usepackage{enumitem,color,graphicx,cite}
\usepackage[all,pdf]{xy}

\usepackage{geometry}  
\usepackage[backref=page,hyperindex=true,CJKbookmarks=true,
colorlinks,linkcolor=blue,anchorcolor=red,citecolor=cyan]{hyperref}

\theoremstyle{plain}
\newtheorem{thm}{Theorem}[section]

\newtheorem{prop}[thm]{Proposition}
\newtheorem{lem}[thm]{Lemma}

\theoremstyle{definition}
\newtheorem{defn}[thm]{Definition}

\newtheorem{claim}[thm]{Claim}

\theoremstyle{remark}
\newtheorem{rmk}[thm]{Remark}

\theoremstyle{plain}
\newtheorem{maintheorem}{Theorem}

\newtheorem{main-cor}[maintheorem]{Corollary}
\newtheorem{main-prop}[maintheorem]{Proposition}
\newtheorem*{notation*}{Notation}
\numberwithin{equation}{section}

\newcommand{\mcl}{\mathcal{L}}
\newcommand{\mcf}{\mathcal{F}}

\newcommand{\mtt}{\mathbb{T}^2\to\mathbb{T}^2}
\newcommand{\mrr}{\mathbb{R}^2\to\mathbb{R}^2}

\newcommand{\mttd}{\mathbb{T}^d\to\mathbb{T}^d}

\def\NN{\mathbb{N}}
\def\RR{\mathbb{R}}

\def\TT{\mathbb{T}}
\def\ZZ{\mathbb{Z}}

\def\tildeL{\widetilde{\mathcal{L}}}

\def\e{{\varepsilon}}

\def\mathcalf{\mathcal{F}_f}

\renewcommand*{\backref}[1]{}
\renewcommand*{\backrefalt}[4]{%
	\ifcase #1 (Not cited.)%
	\or        (Cited on page~#2.)%
	\else      (Cited on pages~#2.)%
	\fi}

\address[R. Gu]{School of Mathematical Sciences, 
	Tongji University, 
	Shanghai 200092, P.R. China}
\email{rhgu@tongji.edu.cn}

\address[M. Xia]{School of Mathematical Sciences, 
	Dalian University of Technology, 
	Dalian 116024, P.R. China}
\email{xiamingyang@dlut.edu.cn}

\title[DA local diffeomorphisms]
{On the Absence of Anosov Factors for DA Local Diffeomorphisms}
\author[R. Gu and M. Xia]{Ruihao GU and Mingyang XIA}

\date{\today}
\subjclass[2020]
{Primary: 37D30;     
 Secondary: 37C05,   
            37C15.   
}
\keywords{Partial hyperbolicity, local diffeomorphism, derived from Anosov, rigidity.}

\begin{document}

\begin{abstract}
	We give a class of local diffeomorphisms 
	which are homotopic to toral hyperbolic endomorphisms,
	but which are not topologically semi‑conjugate, 
	within the homotopic class of the identity,	to any Anosov local diffeomorphism.
	In particular, we show that 
	for a non-invertible partially hyperbolic $C^1$-smooth local diffeomorphism with expanding directions on the $2$-torus, 
	if it is semi-conjugate to an Anosov local diffeomorphism, then it is also an Anosov local diffeomorphism.
\end{abstract}

\maketitle

\section{Introduction}

Let $f,g:M\to M$ be two local diffeomorphisms of a $C^{\infty}$-smooth closed Riemannian manifold $M$.
We say that $f$ is topologically \textit{semi-conjugate }to $g$ 
if there exists a continuous surjection $h:M\to M$ homotopic to the identity Id$_M$, 
such that $h\circ f=g\circ h$. 
In this case, we call $g$ a \textit{factor} of $f$. 
Moreover, if such a \textit{semi-conjugacy} $h$ is homeomorphic, 
then $h$ is called a topological \textit{conjugacy}. 

In 1967,  Smale \cite{Smale1967} conjectured, up to topological conjugacy, 
a classification of completely hyperbolic systems for Anosov diffeomorphisms and expanding local diffeomorphisms. 
The classification of expanding local diffeomorphisms achieves \cite{F70,Gro81,Shub1969}: 
an expanding local diffeomorphism is topologically conjugate to an affine expanding endomorphism of an infra-nilmanifold. 
However, the classification of Anosov diffeomorphisms is widely open. 
The facts we know so far are the following:
If an Anosov diffeomorphism of $d$-manifold $M$ is codimension-one,
then $M$ is homeomorphic to the $d$-torus $\TT^d $\cite{F70,Newhouse1970};
If restricted to a nilmanifold, 
an Anosov diffeomorphism is topologically conjugate to an affine hyperbolic automorphism \cite{Franks1969,F70,Manning1974}. 

Later, beyond the hyperbolicity, Franks \cite{F70} and Shub \cite{Shub1969} generally 
explored to classify, up to topological semi-conjugacy, the homotopic classes of the completely hyperbolic systems.
\begin{thm}[\!\cite{F70,Shub1969}\,]\label{1 thm semi-conj holds}
	Let $f:\mttd$ be a diffeomorphism (resp. local diffeomorphism) 
	and $g:\mttd$ be an Anosov diffeomorphism (resp. expanding local diffeomorphism). 
	If $f$ is homotopic to $g$, then $f$ is topologically semi-conjugate to $g$.
\end{thm}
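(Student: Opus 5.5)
Lift both maps to the universal cover $\RR^d$ and seek $h$ in the form $H=\mathrm{Id}+u$ with $u:\RR^d\to\RR^d$ bounded and $\ZZ^d$-periodic, so that $H$ descends to a map $h:\TT^d\to\TT^d$ homotopic to the identity. Since $f$ and $g$ are homotopic, their lifts $F,G$ can be chosen with the same linear part $A$ (the induced action on $\pi_1=\ZZ^d$), so $F-G$ is bounded and $\ZZ^d$-periodic. The semi-conjugacy equation $H\circ F=G\circ H$ will then be a fixed-point problem for $u$ in the Banach space $C^0_{\ZZ^d}(\RR^d,\RR^d)$ of bounded continuous periodic maps with the sup norm. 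Here I use that, for each $n\in\ZZ^d$, $F(x+n)=F(x)+An$ and likewise for $G$.

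In the expanding case I rewrite the equation as $u = G^{-1}$-type contraction. Concretely, I would take the form $u(x)=\Phi(u)(x):=G_{\rm loc}^{-1}\bigl(H(F(x))\bigr)-x$, choosing the branch of $G^{-1}$ given by the global inverse of the lift $G$. Because $g$ is expanding, $G$ is a diffeomorphism of $\RR^d$ whose inverse is a uniform contraction, so $\Phi$ is a contraction. This gives a unique $u$, hence $H$ with $G\circ H=H\circ F$. Periodicity is preserved by $\Phi$ using $F(x+n)=F(x)+An$ and $G^{-1}(y+An)=G^{-1}(y)+n$.

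In the Anosov case I would use Anosov structural theory rather than a pure linear argument. The natural route is Franks' approach: $g$ is expansive and has the shadowing property on the lift, with uniform constants. For each $x\in\RR^d$, the orbit $\{F^k(x)\}_{k\in\ZZ}$ should be a pseudo-orbit of $G$ with uniform error $\|F-G\|_\infty$ in the right sense. Strictly, a large error needs care, which I address below. Global shadowing on $\RR^d$ for the lift of an Anosov diffeomorphism of $\TT^d$ then yields a unique $G$-orbit staying at bounded distance, and I define $H(x)$ as its $0$-th point. By uniqueness (expansivity of $G$ on $\RR^d$ at all scales, coming from the global product structure of the lifted foliations), $H\circ F=G\circ H$, $H(x+n)=H(x)+n$, and $H$ is continuous.

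The main obstacle is the Anosov case with $F-G$ not small. The fix I would try first is to use the global product structure: the lifted stable and unstable foliations of $G$ are quasi-isometric and intersect in exactly one point, uniformly. This gives shadowing for pseudo-orbits of arbitrary bounded jump, with the shadowing distance bounded in terms of the jump. Having obtained $H$, surjectivity follows because $H$ is homotopic to the identity on $\TT^d$, so it has degree one. An alternative is to quote Franks' theorem that Anosov maps on tori are conjugate to their linear parts, and then solve the problem for the linear hyperbolic map $A$ explicitly by splitting $\RR^d=E^s\oplus E^u$. Along $E^u$ one solves $u^u=A^{-1}(u^u\circ F+\phi^u)$, and along $E^s$ one solves $u^s=A u^s\circ F^{-1}-\ldots$; each is a contraction. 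In the non-invertible local-diffeomorphism case only the forward (expanding) equation is needed, which is why it is stated for expanding $g$.
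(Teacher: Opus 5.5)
The paper gives no proof of this statement; it quotes it from Franks and Shub. Your argument is the classical one and is correct, provided you take your ``alternative'' as the main line in the Anosov case.

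In the expanding case, the contraction $u\mapsto G^{-1}\circ(\mathrm{Id}+u)\circ F-\mathrm{Id}$ on bounded $\ZZ^d$-periodic maps is Shub's argument. You only need $G^{-1}$ to contract in the lift of an adapted metric (or pass to an iterate of $\Phi$), and your periodicity check via $G^{-1}(y+An)=G^{-1}(y)+n$ is right.

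In the Anosov case, write $\phi=F-A$ and solve $u^u=A^{-1}(u^u\circ F+\phi^u)$ and $u^s=A\,u^s\circ F^{-1}-\phi^s\circ F^{-1}$. This is Franks' proof of the semi-conjugacy $h_A$ to the linear part. Then $h=h_0^{-1}\circ h_A$ finishes, where $h_0$ is the conjugacy from $g$ to $A$, homotopic to the identity.

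The stable equation is where invertibility enters: it needs $F^{-1}(x+n)=F^{-1}(x)+A^{-1}n$, i.e. $A\in GL(d,\ZZ)$. This is exactly what breaks for the non-invertible Anosov maps studied in the paper. There the bounded solution $H$ on the universal cover (Proposition \ref{2 prop semi-conj in R2}) still exists but need not commute with deck transformations.

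There are two caveats.

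First, the route you list first for Anosov $g$ is not justified in your sketch. That route is global shadowing of arbitrary bounded-jump pseudo-orbits of the lift $G$, deduced from a global product structure of quasi-isometric lifted foliations. For a non-linear Anosov diffeomorphism of $\TT^d$, both the global product structure and large-jump shadowing on $\RR^d$ are normally obtained from the conjugacy to the linear part. So that route is not independent of the theorem you quote in the alternative, and it should not be your main line.

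Second, the conjugacy of Anosov diffeomorphisms of $\TT^d$ to their linear parts is due to Franks (when $\Omega=\TT^d$) and Manning (in general). You should say ``Anosov diffeomorphisms'' there rather than ``Anosov maps''. In this paper's terminology an Anosov map is non-invertible, and such maps are in general not conjugate to their linearizations.
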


In 1974, Ma\~n\'e and Pugh \cite{ManePugh1975} extended the definitions of Anosov diffeomorphisms and expanding local diffeomorphisms to the following \textit{Anosov local diffeomorphisms}. 
\begin{defn}[\!\cite{ManePugh1975}\,]\label{1 def Anosov map manepugh} 
	Let $f:M\to M$ be a local diffeomorphism. We say $f$ is \textit{Anosov}, 
	if there exists a continuous $Df$-invariant subbundle $E_f^s$ of the tangent bundle $TM$ such that 
	$Df$ uniformly contracts on $E_f^s$ and uniformly expands on the quotient bundle $TM/E_f^s$.
\end{defn}

This unifies the cases of 
Anosov diffeomorphisms ($f$ is invertible) and expanding local diffeomorphisms ($E_f^s$ is trivial).  
Then, we call a local diffeomorphism $f:\mttd$ \textit{derived from Anosov} (abbr. \textit{DA}), 
if it is homotopic to a linear Anosov map $A=f_*$, the \textit{linearization} of $f$. 

Throughout this paper, 
by \emph{map} we mean a local diffeomorphism,
by \emph{Anosov map} we mean a non-invertible Anosov local diffeomorphism, 
which is neither an Anosov diffeomorphism nor an expanding map.

In the present work, we continue to study the homotopic class for toral Anosov maps, 
and show that the classification in Theorem \ref{1 thm semi-conj holds} fails for toral Anosov maps. 
Specifically, we have the following result.

\begin{maintheorem}\label{main-thm-factor}
	There exists a $C^1$-open subset $\mathcal{U}$ in the space of maps on $\TT^2$ 
	such that for every $f\in \mathcal{U}$, the map $f$ is derived from Anosov, 
	but $f$ is not topologically semi-conjugate to any Anosov map. 
	Indeed, for every linear Anosov map $A$ on $\TT^2$, there exists such a subset $\mathcal{U}$ 
	within the homotopic class of $A$, in $C^1$ topology.
\end{maintheorem}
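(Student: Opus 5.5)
The plan is to construct $\mathcal{U}$ explicitly near a partially hyperbolic DA deformation of $A$, and to show that any Anosov factor would force $f$ to have a property that is robustly violated on $\mathcal{U}$.

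Fix a linear Anosov map $A$ on $\TT^2$ that is non-invertible, so $|\det A|\ge 2$, with eigenvalues $0<|\mu|<1<|\lambda|$. Following the classical Ma\~n\'e DA construction, deform $A$ near a fixed point $p$ inside a small disc. The stable direction is changed so that $p$ becomes a repelling fixed point, i.e.\ $Df(p)$ expands in both directions. Away from the disc $f$ equals $A$, and the unstable cone field stays invariant and uniformly expanded, so $f$ is partially hyperbolic with an expanding direction $E^u$ and a center direction $E^c$. $C^1$-small perturbations keep both the cone field and the repelling fixed point, and these conditions define the open set $\mathcal{U}$. By construction every $f\in\mathcal{U}$ is homotopic to $A$, hence DA.

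Next, suppose $h\circ f=g\circ h$ with $g$ an Anosov map and $h$ homotopic to $\mathrm{Id}$. Then $g_*=f_*=A$. Lift to $\RR^2$: $H\circ F=G\circ H$ with $H-\mathrm{Id}$ bounded. The key non-invertible phenomenon (Przytycki, and Aoki--Hiraide) is that an Anosov map $g$ that is not invertible has stable direction $E^s_g$ depending on the backward orbit. If $g$ is \emph{special}, meaning $E^s_g$ is independent of the orbit, then $g$ is conjugate to $A$ by Aoki--Hiraide. Otherwise $g$ has no globally defined stable foliation.

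I would show that the semi-conjugacy forces $g$ to be special. The argument runs as follows. Since $H$ is at bounded distance from the identity, $H$ maps the set of points whose full $F$-forward orbits stay bounded apart onto $G$-stable sets. On the other hand, $f$ has an expanding fixed point $p$. The point $h(p)$ is fixed by $g$, so it is hyperbolic of saddle type. The local unstable set of $p$ under $f$ is a full neighbourhood. Hence $h$ must collapse something: the connected set $h^{-1}(W^s_g(h(p)))\ni p$ restricted near $p$. Here $W^s_g$ is taken along the fixed backward orbit, which is canonical at a fixed point.

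Using $H\circ F^n=G^n\circ H$ and the fact that $F$ expands near $p$, points near $p$ are pushed apart exponentially. Their images under $G^n$ therefore stay within the $G$-stable direction only if the points collapse. I expect this to show that $h$ is constant on a nontrivial curve through $p$ tangent to nothing. More precisely, $h$ collapses every $F$-backward-asymptotic set onto a point. Combined with surjectivity, the degree-one property of $h$ (homotopic to the identity), and a topological-entropy comparison, this yields a contradiction. The entropy comparison uses $h_{top}(g)=\log|\lambda|+\log|\det A/\lambda|$-type Lyapunov data versus the fact that the unstable leaves of $f$ through $p$ map injectively.

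The main obstacle is this last step: turning the existence of an expanding fixed point into a genuine contradiction with the factor map. My first attempt would instead compare the preimage structure. For an Anosov non-special $g$, the set of stable directions over a point is a Cantor set generically. Meanwhile $f$ restricted near $p$ has all backward orbits through $p$ converging to $p$ in a uniformly expanding way, in both directions. This should force $h$ to send a 2-dimensional neighbourhood into $W^u_g$-local plaques, i.e.\ $h$ has one-dimensional image locally. Iterating with $f$, the image $h(\TT^2)=\TT^2$ would be covered by countably many such images, contradicting Baire/dimension. This is the step where the argument is most likely to need refinement, e.g.\ via the center leaves of $f$ and the rigidity statement in the abstract.
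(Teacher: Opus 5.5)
Your construction of $\mathcal U$ matches the paper: a Ma\~n\'e-type deformation of $A$ at a fixed point turns it into a source while keeping an invariant unstable cone field, and you take a $C^1$-neighbourhood on which the cone field and the source persist (Proposition~\ref{5 prop example} plus the openness argument). That part is fine.

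\textbf{The gap.} You never show that a DA map with a source has no Anosov factor, and this is the step carrying all the weight. Your local picture can be made correct. Suppose $h\circ f=g\circ h$, set $q=h(p)$, and let $U$ be a neighbourhood of $p$ on which the inverse branch fixing $p$ contracts. Then each $x\in U$ has a backward orbit tending to $p$, so $h(U)\subset W^u(q,\bar q)$, where $\bar q$ is the constant backward orbit. Hence $h$ collapses a nontrivial center arc through $p$. It does not collapse ``backward-asymptotic sets to points'', since $h$ is injective on unstable leaves.

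A non-injective semi-conjugacy is not contradictory in itself, and none of your closing devices converts this local collapse into a contradiction:
\begin{itemize}
\item A semi-conjugacy only gives $h_{\mathrm{top}}(f)\ge h_{\mathrm{top}}(g)$, so the entropy comparison points the wrong way.
\item The Baire argument needs $\bigcup_{n\ge0}f^n(U)=\TT^2$, which is a transitivity property of $f$ you do not have. The set $K=\TT^2\setminus\bigcup_n f^n(U)$ is closed with $f^{-1}(K)\subset K$. Compactness and Baire only give $h(K)=\TT^2$, which is no contradiction.
\item The claim that $g$ must be special comes with no argument. Even granting it, you do not say what contradiction follows.
\end{itemize}
You also have the bundles reversed. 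For an Anosov map, $E^s_g$ depends only on the point and the stable foliation always exists. It is $E^u_g$ that depends on the backward orbit, and \emph{special} refers to $E^u$.

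\textbf{What is missing.} You need a global rigidity statement, which is the paper's Theorem~\ref{main-thm-conjugacy}: a $C^1$ DA map semi-conjugate to an Anosov map is itself Anosov, and $h$ is a conjugacy. Applied to maps in $\mathcal U$, which have a source, this gives the theorem at once.

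It rests on the dichotomy of Theorem~\ref{thm d dichotomy}: a DA map is either special or $u$-accessible. Its proof uses the fact that a single $\ZZ^2$-equivariant unstable leaf already yields a semi-conjugacy to $A$ (Appendix~\ref{sec-app-a}).
\begin{itemize}
\item In the special case, $f$ becomes semi-conjugate to $A$, and Theorem~\ref{2 thm rigidity special} forces $f$ to be Anosov.
\item In the $u$-accessible case, the closure $\bar\Gamma$ of the injectivity set of the lift $H$ is nonempty, $\mcf^u_F$-saturated and $\ZZ^2$-periodic. Accessibility therefore gives $\bar\Gamma=\RR^2$. Since $H|_{\mcf^c_F}$ is not locally constant at points of $\bar\Gamma$, $H$ is injective.
\end{itemize}
For your example, injectivity already suffices: a homeomorphism cannot map the open set $U$ into the curve $W^u(q,\bar q)$. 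The paper goes further (bi-H\"older conjugacy, then negative center exponents) only to obtain hyperbolicity of a general $f$.

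So the idea you lack is a global mechanism of this kind that propagates injectivity of $h$ to a neighbourhood of the source. Local analysis at $p$ cannot supply it.
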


\begin{rmk}\label{1 rmk special case of factor}
	In 2023, J\'{e}r\^{o}me Buzzi asked,
	at the conference of Beyond Uniform Hyperbolicity in Poland,
	whether Theorem \ref{1 thm semi-conj holds} holds for the case of Anosov maps.  
	Motivated by this, the authors showed 
	in \cite{GX2023} that 
	there exist local diffeomorphisms derived from Anosov on $\TT^2$ but not semi-conjugate to their linearizations. 
	This corresponds to the case of $g=f_*$ in Theorem \ref{1 thm semi-conj holds}.  
	However, it is not sufficient for a complete answer to above question, 
	nor for establishing Theorem \ref{main-thm-factor}. 
	Indeed, Ma\~n\'e-Pugh \cite{ManePugh1975} and Przytycki \cite{Pr76} proved that 
	any Anosov map is \textit{not structurally stable}, in particular, 
	there exist Anosov maps being $C^{1}$-close but not conjugate to the linear Anosov map $A$ in Theorem \ref{main-thm-factor}. 
	Note that generic Anosov maps on $\TT^2$ cannot be conjugate to its linearization \cite{AGGS23}. 
	Hence, Theorem \ref{main-thm-factor} needs to deal with much more general cases.
\end{rmk}

In this paper, we mainly consider DA maps with \textit{partially hyperbolic} splittings. 
In fact, we focus on a partially hyperbolic map $f:M\to M$ with uniformly expanding directions, 
i.e., there is a $Df$-invariant subbundle $E^c$ on $TM$ such that $Df$ uniformly expands on the quotient bundle $TM/E^c$ (see Definition \ref{2 def ph endo}).  
In the following, by \emph{DA map} we mean a non-invertible local diffeomorphism 
which is homotopic to a linear Anosov map and admits a partially hyperbolic splitting with uniformly expanding directions. 

To prove Theorem \ref{main-thm-factor}, we indeed show that 
Theorem \ref{1 thm semi-conj holds} fails to DA maps by a rigidity argument.  
The rigidity phenomenon on (partially) hyperbolic maps 
in the sense of the obstruction of a conjugacy having higher regularity has been extensively studied 
in decades \cite{deMM1986,dL92,G08,G17}. 
There are remarkable works appearing in recent years, such as 
the rigidity of Lyapunov exponents \cite{SY19}, 
the rigidity of SRB measures \cite{ALOS26}, 
the rigidity of invariant foliations \cite{GoS23,GKS23}, 
the global rigidity \cite{DG24,KSW24}, 
and the rigidity for very non-algebraic expanding maps \cite{GRH23}.  
Here, we consider the following rigidity of the existence of a semi-conjugacy 
between a DA map and an Anosov map. 
By $C^{r}\ (r>1)$, we mean that the regularity is at least $C^{1+\alpha}$ for some $0<\alpha<1$.

\begin{maintheorem}\label{main-thm-conjugacy}
	Let $f:\mtt$ be a $C^1$-smooth  DA map and $g:\mtt$ be a $C^{1}$-smooth Anosov map.  
	Assume that $f$ is topologically semi-conjugate to $g$ via  $h:\mtt$. 
	Then, $f$ is in fact an Anosov map, and $h$ is a homeomorphism and hence a topological conjugacy.
	Moreover, if we further assume that both $f$ and $g$ are $C^{r}$-smooth $(r>1)$, 
	then the homeomorphism $h$ is $C^r$-smooth along every leaf of the stable foliation.
\end{maintheorem}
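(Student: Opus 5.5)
Throughout, let $A=f_*=g_*$; since $h$ is homotopic to the identity, $f$ and $g$ have the same linearization. The plan is to pass to the universal cover and work with the lifts $F,G,H:\RR^2\to\RR^2$, where $H\circ F=G\circ H$ and $H-\mathrm{Id}$ is bounded and $\ZZ^2$-periodic. The basic tool I expect to use is the following fact about Anosov maps on $\TT^2$ (Ma\~n\'e--Pugh, Przytycki). On the universal cover $G$ is conjugate to $A$ by a unique homeomorphism $H_G$ at bounded distance from the identity, and the lifted stable foliation $\widetilde{\mathcal F}^s_g$ is well defined. The unstable directions of $g$ depend on the choice of backward orbit. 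For $f$, the assumption that $f$ is partially hyperbolic with expanding directions gives the invariant center bundle $E^c_f$ and a lifted center foliation $\widetilde{\mathcal F}^c_f$. Integrability in dimension $2$ is automatic for a continuous line field, after fixing a choice of integral curves. The expanding cone family on the lift can be used to make $\widetilde{\mathcal F}^c_f$ quasi-isometric and at bounded distance from the $A$-stable lines.

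\textbf{Step 1: $H$ maps center leaves of $f$ into stable leaves of $g$.} Two points $x,y$ lie in the same leaf of $\widetilde{\mathcal F}^s_g$ exactly when $d(G^n x,G^n y)$ stays bounded for $n\ge 0$; for a hyperbolic $A$ this bounded-forward-orbit characterization holds on the cover. If $x,y$ lie on a common $F$-center leaf, then their forward $F$-orbits stay at bounded distance, because the center leaves are shadowed by $A$-stable lines. Since $H$ is at bounded distance from the identity, the same holds for $H$. Conversely, two points on different $F$-center leaves separate exponentially in the expanding direction, and then $A$-shadowing forces their images under $H$ apart as well. Hence $H$ sends leaves of $\widetilde{\mathcal F}^c_f$ onto leaves of $\widetilde{\mathcal F}^s_g$, and the induced map on leaf spaces, both homeomorphic to $\RR$ transversally, is a bijection. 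This part should be routine.

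\textbf{Step 2: injectivity of $H$ along center leaves, and the Anosov property.} This is the crux. Suppose $H$ collapses a nontrivial center arc $J$ to a point. Then every iterate $F^n(J)$ is also collapsed, and $|F^n(J)|$ stays bounded in both time directions, forward by Step 1 and backward along all preimage branches. The aim is to contradict non-invertibility. Use the rigidity property of Anosov maps that $g$ has stable-leaf-independent unstable directions only when it is special; more robustly, count preimages. Every point has $|\det A|$ preimages under both $f$ and $g$, and $h$ is homotopic to the identity and hence of degree one. A collapsed interval on a center leaf lets one build, by pulling back along inverse branches whose unstable directions differ, a point of $\TT^2$ with more than one $h$-preimage in a way that contradicts the degree count. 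I would first try the simplest version: show $h^{-1}(p)$ is a connected center arc for every $p$, show the set of points with nontrivial fibers is $g$-invariant, closed, and saturated by stable leaves, and conclude from the minimality of $\mathcal F^s_g$ that it is empty. Once $h$ is injective, it is a homeomorphism. The center direction of $f$ then cannot contain neutral or expanding behaviour: a collapse-free conjugacy transfers the exponential contraction of stable leaves of $g$ to uniform topological contraction along center leaves, and a standard argument (Lyapunov exponents of all invariant measures negative along $E^c$) upgrades this to uniform contraction. So $f$ is Anosov.

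\textbf{Step 3: smoothness along stable leaves when $r>1$.} Now $h$ conjugates $f$ to $g$ and maps $\mathcal F^s_f$ to $\mathcal F^s_g$. The stable holonomies and the unstable directions are not canonical, so I would work only along stable leaves, which are $C^r$ one-dimensional curves. On each stable leaf, construct the $f$-invariant affine structure from the bounded-distortion series $\rho_f(x,y)=\prod_{n\ge0}\|Df|_{E^s}(f^n y)\|/\|Df|_{E^s}(f^n x)\|$, which converges by Hölder continuity, and do the same for $g$. Matching these through $h$ requires equality of stable Lyapunov exponents at periodic points. That equality should follow from non-invertibility, through the rigidity phenomenon of \cite{GX2023}: the semi-conjugacy must be compatible with all inverse branches, and this forces the periodic data along stable directions to agree. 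With equal periodic data, Livšic's theorem gives that $h$ is $C^1$ along stable leaves, and $C^r$ follows by bootstrapping through the affine structures. I expect Step 2, ruling out collapsed center arcs, to be the main obstacle.
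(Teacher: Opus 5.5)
Your Step 2 has a genuine gap, and it is exactly the step that needs the paper's new ideas. Counting preimages cannot give a contradiction. Here $h$ is monotone along center leaves with connected fibers, and degree-one maps of this type that collapse center arcs do occur (e.g.\ for Smale's DA diffeomorphism), so degree considerations cannot detect the difference. Your saturation claim is also false. $H$ restricted to $\mcf^c_F(x)$ is a monotone surjection onto $\mcf^s_G(H(x))$, so each stable leaf of $g$ contains at most countably many points with nondegenerate fibers. Hence the non-injectivity set is neither closed nor $\mcf^s_g$-saturated, and minimality of $\mcf^s_g$ gives nothing.

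The saturation that actually holds is along \emph{unstable} leaves. Since $H$ intertwines $\mcf^u_F$-holonomy between center leaves with $\mcf^u_G$-holonomy between stable leaves, the closure $\bar{\Gamma}$ of the injectivity set is closed, $\mcf^u_F$-saturated and $\ZZ^2$-periodic (Proposition \ref{2 prop foliation on R2}). However, unstable manifolds of a non-invertible DA map depend on backward orbits and are not known a priori to be dense (Remark \ref{3 rmk u is not dense}), so there is no minimality to appeal to. The missing idea is the dichotomy of Theorem \ref{thm d dichotomy}: $f$ is either special or $u$-accessible, and $g$ inherits the same alternative (Corollary \ref{2 cor both special}). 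The special branch comes from the fact that failure of accessibility yields a single $\ZZ^2$-equivariant unstable leaf, which by Appendix \ref{sec-app-a} already forces a semi-conjugacy to $A$ and hence specialness. In the accessible case, chains of projected $\mcf^u_F$-leaves spread $\bar{\Gamma}$ over all of $\RR^2$. This contradicts the fact that interior points of a collapsed arc lie outside $\bar{\Gamma}$. In the special case, you compose $h$ with the conjugacy from $g$ to $A$ and apply the rigidity Theorem \ref{2 thm rigidity special}; this is where non-invertibility genuinely enters.

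Your passage from ``conjugacy'' to ``Anosov'' is also unjustified. A topological conjugacy does not transfer exponential rates: a fixed point whose center dynamics is $t\mapsto t-t^3$ is topologically conjugate to a hyperbolic sink. So negativity of center exponents does not follow. The paper first proves that $h$ is bi-H\"older (Lemma \ref{4 lem Holder conjugacy}). The monotone map $H|_{\mcf^c_F}$ has a Lipschitz point, and H\"older continuity is transported to every point with uniform constants by H\"older unstable holonomies and deck translations along accessibility chains of bounded length. Then $d_{\mcf^c_f}(f^{km}p,f^{km}x)\le C\,d_{\mcf^s_g}(g^{km}q,g^{km}y)^{\alpha}$ gives $\lambda^c(p,f)\le\alpha\lambda_g<0$ uniformly over periodic points. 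Domination together with shadowing and transitivity upgrades this to uniform contraction.

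Similarly, in Step 3 the equality of periodic stable exponents is not a formal consequence of compatibility with inverse branches; in the accessible case these exponents need not be constant. The paper first shows that $h$ is $C^1$ along center leaves. It propagates a differentiability point of the monotone map via the uniformly $C^1$ unstable holonomies along bounded accessibility chains, obtaining two-sided derivative bounds and then continuity of the derivative. Only after that does it invoke de la Llave's argument.
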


\begin{rmk}\label{1 rmk compare to endo}
		Similar rigidity phenomena have been investigated for various settings in \cite{AGGS23,GS22,GX2023}.
		Here, we clarify the relationship between Theorem \ref{main-thm-conjugacy} and the corresponding results in these works.
		 
		Let $f$ and $g$ be local diffeomorphisms given in Theorem \ref{main-thm-conjugacy}, i.e., $f$ is DA and $g$ is Anosov.
			\begin{itemize}
				\item  In \cite{AGGS23}, the authors assume further that $f$ is also Anosov and $g=f_*$, 
				and show in $C^1$-regularity that the existence of a conjugacy $h$ implies that $f$ has constant periodic contraction. 
				In $C^{1+\alpha}$-regularity, the conjugacy $h$ is automatically  $C^{1+\alpha}$-smooth along  stable manifolds. 
				\item In \cite{GS22}, the authors consider that both $f$ and $g$ are Anosov maps, 
				and show also in $C^{1+\alpha}$-regularity that the conjugacy $h$ is automatically $C^{1+\alpha}$-smooth along  stable manifolds. 
				\item In \cite{GX2023}, the authors prove Theorem \ref{main-thm-conjugacy} 
				assuming $g = f_*$ to be the linearization of $f$. 
			\end{itemize}
		Therefore, \cite{GS22} extends \cite{AGGS23} to general Anosov case in the $C^{1+\alpha}$ setting, 
		as we have seen in Remark \ref{1 rmk special case of factor} that Anosov maps have no structural stability.  
		On the one hand, \cite{GX2023} can be regarded as a DA-version continuation of \cite{AGGS23}.  
		On the other hand, the present work can be regarded a DA-version continuation of \cite{GS22}. 
		Furthermore, it is worth pointing out that 
		there exists no such rigidity result of Theorem \ref{main-thm-conjugacy} in \cite{GS22} 
		under the $C^1$ assumption. 
		Thus, the rigidity part of Theorem \ref{main-thm-conjugacy} in the $C^1$ setting is also a novelty of this paper.
\end{rmk}

\begin{rmk}\label{1 rmk compare to T3}
	{\rm Notice that DA local diffeomorphisms on $\TT^2$ can be seen as analogues of DA diffeomorphisms on $\TT^3$ 
		(see also \cite{AGGS23,GS22} for some discussion).
		We also would like to compare Theorem \ref{main-thm-conjugacy} to rigidity results of 
		DA diffeomorphism on $\TT^3$ \cite{GS20,HS21} in which the $C^{1+\alpha}$ assumption is necessary. 
		To clarify the relationship between our work and the results in \cite{GS20,HS21}, 
		we will provide a detailed discussion in Subsection \ref{subsec DA diffeo} after introducing some preliminary notions.}
\end{rmk}

We would like to explain one of the $C^1$-techniques in the present work.
To go beyond the situation of \cite{GX2023} as discussed in Remark \ref{1 rmk compare to endo},
we need to deal with the case of $g$ being non-linear and show that $f$ is Anosov by the following process:
\begin{enumerate}[label=(\roman*)]
	\item the semi-conjugacy is a conjugacy; 
	\item the conjugacy is bi-H\"older  continuous; 
	\item the H\"older conjugacy implies the hyperbolicity of $f$. 
\end{enumerate}
It is worth pointing out that Fisher in PhD thesis  \cite{F2004} also gives a result that 
a $C^2$ diffeomorphism bi-H\"older (with large H\"older exponents) conjugate to an Anosov diffeomorphism 
is also Anosov.
In \cite{G10}, Gogolev gave a counter-example: 
a diffeomorphism is not Anosov, but is H\"older conjugate to an Anosov diffeomorphism. 
Note that the example in \cite{G10} admits no globally partially hyperbolic splitting, 
while the DA map $f$ in Theorem \ref{main-thm-conjugacy} does. 
To the best of our knowledge, 
no such argument to establish the hyperbolicity of $f$ has appeared in the literature. 
Based on the work of \cite{GS20,HS21} as mentioned in Remark \ref{1 rmk compare to T3},  
we also apply this argument to show a new rigidity result for DA diffeomorphism on $\TT^3$;
see Subsection \ref{subsec DA diffeo} and Appendix \ref{sec-app-c}.

Partially hyperbolic DA systems have been attractive subjects, 
since Ma\~{n}\'{e} \cite{M78} constructed the robustly transitive but non-hyperbolic examples on $\TT^3$. 
The partial hyperbolicity and the semi-conjugacy (Theorem \ref{1 thm semi-conj holds}) 
preserve many dynamical tools and properties of uniformly hyperbolic systems, 
such as nice foliation structures \cite{FPS2014,Potrie2015,HUY22} and good statistics properties \cite{U12,BFSV12,PTV2018}.
However, as shown by Theorem \ref{main-thm-factor}, DA maps may fail to admit Anosov factors.
Hence, one cannot expect to directly obtain dynamical information of a non-hyperbolic DA map
from its linearization — even in the setting of Anosov systems — via a non‑existent semi‑conjugacy on the ambient manifold.
Instead, one may study semi‑conjugacies on the universal covers and on the orbit spaces,
though they may not descend to the base manifold.
For instance, structural stability still holds at the level of orbit spaces; see \cite{AH94,BR13,BK17}.

It is worth mentioning that statistical properties of local diffeomorphisms have been investigated in two complementary settings. Andersson, Carrasco and Saghin \cite{ACS25} constructed $C^1$-open families of conservative, stably ergodic, 
non-uniformly hyperbolic DA local diffeomorphisms on $\TT^2$ that lack dominated splittings. 
We also refer to \cite{BCF18} for the study of ``bad'' entropy behaviors of $C^1$-smooth diffeomorphisms without dominated splittings.
In a different direction, 
Tsujii proved that for sufficiently smooth partially hyperbolic local surface diffeomorphisms with uniformly expanding directions, 
generic maps admit finitely many ergodic physical measures whose union of basins has full Lebesgue measure \cite{T05}. 
From the perspective of SRB measures, Theorem \ref{main-thm-conjugacy} yields a rigidity result for Gibbs states 
within the class of dominated DA systems.

\vspace{8pt}\noindent
\textbf{Organization of this paper.}
The rest of the paper is organized as follows.
In Section \ref{sec Pre}, we recall some basic properties of DA maps.
In Section \ref{sec: dichotomy}, we establish a dichotomy concerning accessibility for DA maps,
which allows us to split the proof of Theorem \ref{main-thm-conjugacy} into two cases.
The accessible case of Theorem \ref{main-thm-conjugacy} is proved in Section \ref{sec rigidty},
while the special case was treated mainly in our previous work \cite{GX2023}.
In Section \ref{sec DA without Anosov factors}, we prove Theorem \ref{main-thm-factor}.
We conclude with three appendices.
In Appendix \ref{sec-app-a}, 
we present a technical intermediate step for the above‑mentioned accessibility dichotomy,
which is also of independent interest.
In Appendix \ref{sec-app-b}, 
we establish some rigidity results for partially hyperbolic DA local diffeomorphisms admitting stable bundles.
In Appendix \ref{sec-app-c}, 
by applying our $C^1$-argument to DA diffeomorphisms on $\mathbb T^3$,
we obtain a new rigidity result for the diffeomorphism case.

\section{Preliminaries}\label{sec Pre}

\subsection{Partially hyperbolic local diffeomorphism}\label{sec: phe}

One can define partial hyperbolicity for local diffeomorphism by considering the quotient bundle 
as in Definition \ref{1 def Anosov map manepugh}.
Here, we give the equivalent definition on the inverse limit space. 
Then we observe the dynamics on the universal cover.  
We refer to \cite{GX2023} for the equivalence of definitions, involving the cone-fields.

\subsubsection{\bf On the inverse limit space}

Let $d(\cdot,\cdot)$ be a metric of $\TT^2$,
and $(\TT^2)^{\mathbb{Z}}:=\{(x_i) \;|\;x_i\in \TT^2, \forall i\in\, \mathbb{Z} \}$ be the product topological space of $\TT^2$
which is compact and metrizable by the metric
\begin{align*}
	\bar{d}((x_i),(y_i))= \sum_{-\infty}^{+\infty} \frac{d(x_i,y_i)}{2^{|i|}}.
\end{align*}
For any $\bar{x}=(x_i)\in (\TT^2)^{\ZZ}$, 
let $(\bar{x})_i:=x_i$ and $\pi_i:(\TT^2)^{\ZZ}\to \TT^2$ be the projection $\pi_i\big(\bar{x}\big)=x_i$. 
Let $\sigma: (\TT^2)^{\mathbb{Z}}\to (\TT^2)^{\mathbb{Z}}$ be the \textit{shift} homeomorphism 
$\big(\sigma (\bar{x})\big)_j=x_{j+1}$, for all $j\in \mathbb{Z}$.
The \textit{inverse limit space} of $f$ is defined by
\begin{align}
	\TT^2_f:=\big\{\bar{x}=(x_i) \;|\; x_i\in \TT^2 \;\; {\rm and} \;\; f(x_i)=x_{i+1},  \forall i\in \mathbb{Z} \big\}.\label{eq. 2.1 inverse limit space}
\end{align}
Denote the restriction of $\sigma$ on $\TT^2_f$ by $\sigma_f$. 
The space $(\TT^2_f,\bar{d})$ is a $\sigma_f$-invariant compact metric space.

\begin{defn}\label{2 def ph endo} 
	A local diffeomorphism $f:\mtt$ is \textit{partially hyperbolic}, 
	if there are constants $C>0,\lambda>1$ and $k\in\NN$ such that for any orbit $\bar{x}=(x_i)\in\TT^2_f$, 
	there is a splitting  $T_{x_i}\TT^2=E^1_f(x_i, \bar{x})\oplus E^2_f(x_i, \bar{x})$  such that
	\begin{itemize}
		\item (Invariance) 
		$Df(E^j_f(x_i, \bar{x}))=E^j_f(x_{i+1}, \bar{x})$, 
		for $j=1,2$ and all $i\in\ZZ$;
		
		\item (Domination) 
		$\|D_{x_i}f^kv_1\|\leq \frac{1}{2}\|D_{x_i}f^kv_2\|$, 
		for all $i\in \ZZ$ and unit vectors $v^{1/2}\in E^{1/2}_f(x_i,\bar{x})$; 
		
		\item (Partial hyperbolicity)  
		For all $i\in\ZZ$ and unit vectors $v^{1/2}\in E^{1/2}_f(x_i,\bar{x})$,
		\begin{enumerate}[label=(\alph*)]
			\item either $\|D_{x_i}f^nv_2\|\geqslant C\lambda^{n} $,  for all $n\in \NN$;
			\item or 	 $\|D_{x_i}f^nv_1\|\leqslant C\lambda^{-n}$,  for all $n\in \NN$.
		\end{enumerate}
	\end{itemize}
	In particular, when both inequalities (a) and (b) hold, $f$ is an \emph{Anosov local diffeomorphism}.
\end{defn}

It is known that both bundles $E^{1/2}_f(x,\bar{x})$ are continuous with respect to $(x,\bar{x})\in \TT^2\times \TT^2_f$ 
and indeed H\"older continuous \cite{Pr76,Pesinbook04}. 
Note that $E^1_f(x_0,\bar{x})$ depends only on $x_0$, 
so we can denote it by $E^1_f(x_0)$, while $E^2_f(x_0,\bar{x})$ generally relies on the choice of negative orbits.  

If (a) of Definition \ref{2 def ph endo} holds,  
$f$ has a uniformly expanding bundle (with respect to orbits). 
Here we focus on the case (a) in the present paper. 
Then we call $f$ a \emph{DA map}, if its linearization is a linear Anosov map, 
and it is partially hyperbolic with a uniformly expanding bundle, denoted by 	
\[T_{x_i}\TT^2=E^c_f(x_i)\oplus E^u_f(x_i, \bar{x}),\]	
where $E^c_f$ and $E^u_f$ are the center bundle and unstable bundle of $f$, respectively.  

If (b) of Definition \ref{2 def ph endo} holds, $f$ admits a uniformly contracting bundle, 
called \emph{stable bundle} and denoted by $E^s_f$ which is unique integrable to the \emph{stable foliation}. 
For completeness, this case of partial hyperbolicity is also considered in Appendix \ref{sec-app-b}.   

Let $f$ be a DA map. By \cite{Pr76}, there exists submanifold $\mcf_f^{u} (x_0,\bar{x})$ of $\TT^2$ 
called \textit{unstable manifold}, tangent to $E^{u}_f(x_0,\bar{x})$ 
such that for any $y_0\in \mcf_f^{u} (x_0,\bar{x})$, there exists $\bar{y}=(y_i)\in \TT^2_f$ such that  
if $y_0\in \mcf_f^{u}(x_0,\bar{x})$, then $d(x_i,y_i)\to 0$ as $i\to -\infty$. 
Again, the unstable manifolds generally depend on the choice of negative orbits.

\subsubsection{\bf On the universal cover space}

Let $f$ be a DA map on $\TT^2$.
Let $F:\mrr$ be a lift of $f$, and $\pi:\RR^2\to \TT^2$ be the natural projection.  
Then $F$ is a partially hyperbolic diffeomorphism \cite{ManePugh1975} with a unstable bundle, 
namely, there exist a continuous $DF$-invariant splitting $T\RR^2=E^c_F\oplus E^u_F$ 
and constants $C>0,\lambda>1$ and $k\in\NN$ such that   
for any $x\in \RR^2$, unit vectors $v_{c/u}\in E^{c/u}_F(x)$, and any $n\in\NN$,
\[ 
\|D_xF^kv_c\|\leqslant \frac{1}{2} \|D_xF^kv_u\|
\quad {\rm and}\quad  
\|D_{x}F^nv_u\|\geqslant C\lambda^{n}. 
\]
We call $E^{c/u}_F$ the center/unstable bundle of $F$. 
Recall that $E^{u}_F$ are uniquely integrable to the so-called \emph{unstable foliation} which is denoted by $\mcf^{u}_F$. 
In the case that $f$ is a DA map, the center bundle $E^c_F$ is integrable \cite{HH21,GX2023} 
(see also Proposition \ref{2 prop foliation on R2}). 
We denote the integrable foliation by $\mcf_F^c$, and call it the \emph{center foliation}. 

\begin{rmk}\label{2 rmk universal to limit}
	The projection of the orbit space $\RR^2_F$ is dense in $\TT^2_f$ \cite{MT16}, 
	i.e., for every orbit $\bar{x}=(x_i)\in \TT^2_f$, 
	there exists $y_k\in\RR^2$ with $\pi(y_k)=x_0$ such that  
	$\bar{y}^k:=\big(\pi(F^i(y_k))\big)_{i\in\ZZ} \in \TT^2_f$ satisfies 
	$\bar{d}(\bar{y}^k, \bar{x})\to 0$ as $k\to +\infty$. 
	By the continuity of bundles, we have
	$$D\pi\big( E^{c/u}_F(y_k)\big) \to E^{c/u}_f(x_0, \bar{x})\ \text{as}\ k\to +\infty.$$ 
	It is clear that for every $x\in\RR^2$ and the corresponding orbits $\bar{x}=(x_i)_{i\in\ZZ}:=\big(\pi(F^i(x))\big)_{i\in\ZZ}\in \TT^2_f$, 
	the leaf $\pi\big( \mcf^{c/u}_F(x)\big)$ coincides with $\mcf_f^{c/u}(\pi(x),\bar{x})$. 
\end{rmk}

\subsubsection{\bf Special property and accessibility}

Let $f$ be a DA map on $\TT^2$. If the bundle $E^u_f$ at  $x$ is independent of the choice of negative orbits,  
we call $x$ a \textit{special point} of $f$. 
Moreover, we say that the map $f$ is \emph{special}, if every point $x\in \TT^2$ is special.
In this case, there exists a $Df$-invariant partially hyperbolic splitting 
\begin{align}
	T\TT^2= E_f^c\oplus_< E_f^u.\label{eq. 1. special}
\end{align}
By Remark \ref{2 rmk universal to limit}, 
$f$ is special if and only if 
the unstable bundle $E^u_F$ of $F$ is \textit{invariant under deck transformations}, i.e.,
for all $x\in \RR^2$ and all $n\in\ZZ^2$, 
$$D\mathcal{T}_n(E^{u}_F(x))=E^{u}_F(\mathcal{T}_n(x)),$$  
where 
\[\mathcal{T}_n: \RR^2\to  \RR^2, \quad \mathcal{T}_n(x)=x+n.\] 

When $f$ is not special, we can consider the \textit{$u$-accessible} class of a point as follows.  
\begin{defn}[\!\cite{GS22}\,]\label{2 def u-accessible class}
		The $u$-accessible class of $x$ for $f$ is defined by
		\begin{align*}
			{\rm Acc}^u(x):=\big\{ y\in \TT^2 \; | \;  \exists\, x=y^0, y^1,\,...\,,\, y^k=y \;\;
			&{\rm and} \;\; \bar{y}^i\in \TT^2_f \;\; {\rm with}\;\; (\bar{y}^i)_0=y^i \\
			&{\rm such \;\; that} \;\; y^{i+1}\in\mathcalf^u (y^i,\bar{y}^i), \forall 0\leq i\leq k-1 \big\}.
		\end{align*}
		We say $f$ is \textit{$u$-accessible}, if ${\rm Acc}^u(x)=\TT^2$ for all $x\in \TT^2$.
\end{defn}
The $u$-accessibility means that any two points on $\TT^2$ can be connected by the unstable manifolds.

\subsection{Rigidity of DA maps}

In this subsection, 
we present a dichotomy of DA maps and give a new rigidity result for $u$-accessibility.  

\begin{maintheorem}\label{thm d dichotomy}
	Let $f:\mtt$ be a $C^1$-smooth  DA map.  Then either $f$ is special, or $f$ is $u$-accessible.
\end{maintheorem}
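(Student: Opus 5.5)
The plan is to show that each $u$-accessible class is either a single unstable leaf in a special-like way or the whole torus. Concretely, I will prove that ${\rm Acc}^u(x)$ is open for every $x$ unless $f$ is special. Since the accessible classes partition $\TT^2$ and $\TT^2$ is connected, openness of all classes forces each class to be $\TT^2$, i.e.\ $u$-accessibility.

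\textbf{Step 1: non-special points give open classes.} Suppose $x_0$ is not special. Then there are two orbits $\bar{x},\bar{x}'$ through $x_0$ with $E^u_f(x_0,\bar{x})\neq E^u_f(x_0,\bar{x}')$. By continuity of the bundles on $\TT^2\times\TT^2_f$, the same transversality persists for orbits near $\bar x,\bar x'$ at nearby base points. Take the unstable curve $\mcf^u_f(x_0,\bar{x})$, and through each point $y$ of a small arc of it take the unstable curve associated with a nearby orbit of $y$ close to $\bar x'$. These curves are transverse to the first arc and depend continuously on $y$, so their union contains a neighbourhood of $x_0$. I will make this precise on the universal cover using Remark~\ref{2 rmk universal to limit}: lifting via deck translates, one obtains two transverse unstable foliations $\mcf^u_F$ and $\mathcal{T}_n^{-1}\mcf^u_F\mathcal{T}_n$ near a lift, giving a local product neighbourhood. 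Hence ${\rm Acc}^u(x_0)$ contains a neighbourhood of $x_0$.

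\textbf{Step 2: propagation.} The set $N$ of non-special points is open, again by continuity of the bundles. It is also invariant in the sense that $f(N)\subset N$ and $f^{-1}(N)\subset N$, because $Df$ maps distinct unstable directions to distinct ones and preimages inherit non-uniqueness. Moreover, ${\rm Acc}^u$ classes are saturated by unstable manifolds and mapped into classes by $f$. I then want to show that if $N\neq\emptyset$ then every class meets $N$. For this, I use that the unstable leaves on $\RR^2$ are quasi-isometric and lie at bounded distance from the linear unstable foliation of $A=f_*$. So every unstable leaf projects densely (an irrational-slope line) and hence hits the open set $N$. Therefore every class contains an open set. Combining this with invariance, I will argue that every class is open: if $y\in{\rm Acc}^u(x)$, a neighbourhood of some point $z$ of the class lies in it, and sliding along unstable holonomies (which are local homeomorphisms between transversals) transports that open set to a neighbourhood of $y$. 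Connectedness then finishes the argument.

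\textbf{Alternative case and main obstacle.} If $N=\emptyset$, then $f$ is special by definition, which gives the dichotomy. The main obstacle is Step 1, namely producing an honest open set from two transverse families of unstable curves when $f$ is only $C^1$. The curves are merely $C^1$ with continuous dependence, so I will rely on a topological argument (invariance of domain / an intersection-number argument on the plane with cone fields) rather than on smooth transversality. A secondary subtlety in Step 2 is that unstable holonomy may be ill-defined between non-special transversals. I would try first to replace it by the density argument applied to each point $y$ directly: the leaf through $y$ for a suitable orbit meets $N$ near $y$ at arbitrarily small distance, which yields openness at $y$ by Step 1.
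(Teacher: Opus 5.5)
Your Step~1 is correct and is essentially the paper's Claims~\ref{3 claim y_0} and~\ref{3 claim int point}. It is not the real obstacle: two continuous transverse families of $C^1$ curves give a local product chart by a routine topological argument.

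The genuine gap is in Step~2. There you assert that every unstable leaf projects densely to $\TT^2$ because it is quasi-isometric and lies at bounded distance from a leaf of $\tildeL^u$. That inference is false. For a small disc $D$, the set $D+\ZZ^2$ is a discrete array of small discs. A curve staying within bounded distance of an irrational line can detour around each of them, so its projection misses $\pi(D)$. The paper flags exactly this point in Remark~\ref{3 rmk u is not dense}. Density of $\pi(\mcf^u_F(x))$ holds for Anosov maps and drives the dichotomy in \cite{GS22}, which your argument essentially reproduces. For DA maps it is not available, because the semi-conjugacy $H$ to $A$ collapses center segments. By the argument of Claim~\ref{3 claim closure}, $H$ only gives that $\bigcup_{n\in\ZZ^2}\mcf^u_F(x+n)$ accumulates on $\bar\Gamma$, the closure of the set of $H$-injection points, not on all of $\RR^2$. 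Your fallback (the leaves through $y$ come arbitrarily close to $N$) needs the same missing density. Separately, $f^{-1}(N)\subset N$ is unjustified, since the non-uniqueness at $f(y)$ may come from a different preimage of $f(y)$; you do not actually need it.

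So your case split ``$N=\emptyset$, or every class meets $N$'' skips the hard case: $N\neq\emptyset$ while every point of $\pi(\bar\Gamma)$ is special. Excluding this is the real content of the theorem, and it needs an idea absent from your proposal. The paper proceeds as follows.
\begin{itemize}
\item Failure of $u$-accessibility yields a leaf $\mcf^u_F(x_0)$ all of whose $\ZZ^2$-translates are unstable leaves (Lemma~\ref{3 lem x_0}).
\item This property propagates to the leaves through $\bar\Gamma$, in particular to one through a fixed point $z_0$ of $F$.
\item Proposition~\ref{2 prop special implies semi-conjugacy} (Appendix~\ref{sec-app-a}) shows that a single such special fixed leaf forces $H$ to commute with the deck transformations, so $f$ is semi-conjugate to $A$.
\item The rigidity Theorem~\ref{2 thm rigidity special} then gives that $f$ is special.
\end{itemize}
Your open-class framework can be completed along these lines. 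If $N$ meets $\pi(\bar\Gamma)$, the accumulation above makes every class meet $N$, and your argument finishes. Otherwise the leaves through $\bar\Gamma$ are special, and one runs the semi-conjugacy argument. Without that second branch the proof does not go through.
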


By Theorem \ref{thm d dichotomy}, there is the following result.

\begin{main-cor}\label{2 cor both special}
	Let $f:\mtt$ be a $C^1$-smooth   DA map semi-conjugate to an Anosov map $g:\mtt$. 
	Then $f$ and $g$ are either both special or both $u$-accessible.
\end{main-cor}

We will prove Theorem \ref{thm d dichotomy} and Corollary \ref{2 cor both special} in Section \ref{sec: dichotomy}.
This dichotomy in Theorem \ref{thm d dichotomy} can be viewed as a continuation of the result in \cite{GS22} 
where the authors further assume that $f$ is Anosov in order to get the same dichotomy. 
Here, it should be mentioned that the proof in \cite{GS22} is invalid for our case, since we do not a priori know  the minimality of unstable manifolds of DA maps (see Remark \ref{3 rmk u is not dense}). 

Following Corollary \ref{2 cor both special},   
Theorem \ref{main-thm-conjugacy} is indeed decomposed into two cases: 
Theorem \ref{2 thm rigidity special} and Theorem \ref{2 thm rigidity accessible}.  
For an Anosov map $f$ with one-dimensional stable bundle, given a periodic point $p$ with period $n$, 
we define the stable Lyapunov exponent of $p$ by $$\lambda^s(p,f):=\frac{1}{n}{\rm log}\|Df^n|_{E^s_f(p)}\|.$$

\begin{thm}[\!\cite{AGGS23,GX2023}\,]\label{2 thm rigidity special}
	Let $f:\mtt$ be a $C^1$-smooth  DA map with linearization $A:\mtt$.  
	Then the following are equivalent:
	\begin{enumerate}
		\item $f$ is semi-conjugate to $A$ via $h:\mtt$;
		\item $f$ is special.
	\end{enumerate}
	Moreover, each of the two items implies that $h$ is a homeomorphism, $f$ is Anosov, 
	and $\lambda^s(p,f)\equiv\lambda^s(A)$ for all $p\in {\rm Per}(f)$.
	And, if $f$ is $C^r\ (r>1)$, 
	then the homeomorphism $h$ is $C^r$-smooth along stable manifolds.
\end{thm}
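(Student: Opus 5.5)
We will prove the equivalence through the lift $F$ of $f$ and the lift $H$ of a candidate conjugacy, using Remark \ref{2 rmk universal to limit} to translate between the universal cover and the inverse limit space.

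\textbf{(1)$\Rightarrow$(2).} Let $H:\RR^2\to\RR^2$ be the lift of $h$ with $H\circ F=A\circ H$ and $H-\mathrm{Id}$ bounded and $\ZZ^2$-periodic. Then $H$ commutes with every deck transformation $\mathcal{T}_n$. The first step is to show that $H$ maps each leaf $\mcf^u_F(x)$ into the affine line $x'+E^u_A$ through $H(x)$, where $E^u_A$ is the unstable eigendirection of $A$. The argument: two points on one unstable leaf have backward $F$-orbits at exponentially shrinking distance, so their $H$-images have backward $A$-orbits at bounded distance, and a bounded backward $A$-orbit difference forces the difference to lie in $E^u_A$.

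The second step shows that $H$ is injective on unstable leaves. If $H$ collapsed a nontrivial unstable arc, the uniform expansion of $F$ along $E^u_F$ would produce arbitrarily long unstable arcs with bounded $H$-image, contradicting $\|H-\mathrm{Id}\|<\infty$. Hence $\mcf^u_F(x)=H^{-1}\big(H(x)+E^u_A\big)$ is a connected component of a set that is invariant under $\mathcal{T}_n$, because $E^u_A$ is linear and $H$ commutes with $\mathcal{T}_n$. Therefore $\mathcal{T}_n\mcf^u_F(x)=\mcf^u_F(x+n)$, so $E^u_F$ is deck-invariant, and $f$ is special by the characterization following \eqref{eq. 1. special}.

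\textbf{(2)$\Rightarrow$(1).} If $f$ is special, both $E^c_F$ and $E^u_F$ are $\ZZ^2$-invariant, so the foliations $\mcf^c_F$ and $\mcf^u_F$ descend to $\TT^2$. Using quasi-isometry of the lifted foliations, which is known for DA maps on $\RR^2$ as in Proposition \ref{2 prop foliation on R2}, we show that each center leaf stays at bounded distance from an affine line parallel to $E^s_A$. The same holds for each unstable leaf with respect to $E^u_A$, and the two foliations have global product structure. We then define $H(x)$ as the intersection of the line parallel to $E^s_A$ that shadows $\mcf^c_F(x)$ with the line parallel to $E^u_A$ that shadows $\mcf^u_F(x)$. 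This map is $\ZZ^2$-equivariant, continuous and conjugates $F$ to $A$, so it descends to the required $h$.

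\textbf{The remaining conclusions.} Injectivity of $H$ along center leaves is the main obstacle. The approach we will try first: if $H$ collapsed a center arc $J$, then $F^n(J)$ would have bounded length for all $n\in\ZZ$, giving a center arc that is neither contracted nor expanded. Via a periodic interval argument, such an arc yields a periodic center segment with uniformly bounded iterates. This contradicts the fact that the degree $|\det A|>1$ covering forces mismatched preimages under specialness, as in \cite{GX2023}.

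Once $h$ is a homeomorphism, we compare unstable Jacobians along periodic orbits. Because $h$ is a homeomorphism preserving unstable foliations, the expansion rates transfer through the degree-$|\det A|$ covering. Since the map is special, volume growth forces $\frac1n\log\|Df^n|_{E^c}\|$ to equal $\log|\det A|-\lambda^u$ at every periodic point. This gives uniform contraction on $E^c$, so $f$ is Anosov and $\lambda^s(p,f)\equiv\lambda^s(A)$.

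Finally, for $C^r$ smoothness we invoke the constant periodic data along the stable foliation, together with Livšic-type arguments on the one-dimensional stable leaves, as in \cite{AGGS23}.
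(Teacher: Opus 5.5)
Your first two steps in (1)$\Rightarrow$(2) are fine: $H$ sends unstable leaves into unstable lines, injectively (this is part of Proposition~\ref{2 prop foliation on R2}). The conclusion $\mcf^u_F(x)=H^{-1}\big(H(x)+E^u_A\big)$, however, is false whenever the fiber $H^{-1}(H(x))$ is a nontrivial center segment $[a,b]$. Since $H$ maps $\mcf^c_F$ into $\tildeL^s$ and $\mcf^u_F$ onto $\tildeL^u$, and $\mcf^c_F,\mcf^u_F$ have global product structure, one gets $H^{-1}\big(\tildeL^u(H(x))\big)=\bigcup_{w\in[a,b]}\mcf^u_F(w)$. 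This is a closed strip: it is connected and contains a continuum of unstable leaves, and injectivity of $H$ on each single leaf does not change that. So your argument proves $\mcf^u_F(x)+n=\mcf^u_F(x+n)$ only for $x$ in the set $\Gamma$ of $H$-injectivity points, hence on $\bar\Gamma$. It says nothing inside the open strips $\RR^2\setminus\bar\Gamma$, which is exactly the scenario of Remark~\ref{3 rmk u is not dense}. Once $H$ is known to be injective, your argument does give (2). But injectivity is the heart of the theorem, and your treatment of it is not a proof. It also invokes specialness, which under hypothesis (1) you only have on $\bar\Gamma$, so the reasoning is circular. Bounded two-sided iterates of a collapsed center arc are not contradictory by themselves. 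For the map of Proposition~\ref{5 prop example}, the Franks map $H$ of Proposition~\ref{2 prop semi-conj in R2} collapses a nontrivial $F$-invariant center segment containing the source, and nothing goes wrong on $\RR^2$. The contradiction has to come from the $\ZZ^2$-equivariance of $H$ together with the fact that the collapsed direction is the one contracted by $A$; compare Proposition~\ref{7 prop special rigidty}, where for $s$-DA maps only the closure of the injectivity set is controlled. Your ``periodic interval'' and ``mismatched preimages'' remarks supply no such mechanism, and nothing forces a nontrivial fiber to be periodic.

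Your (2)$\Rightarrow$(1) has a similar hole. A quasi-isometric leaf at bounded distance from a line is at bounded distance from every parallel line at bounded offset, so ``the'' shadowing line is undefined. To make it canonical you must use dynamics: forward iterates pin the $L^s$-line of a center leaf, and backward iterates pin the $L^u$-line of an unstable leaf. The resulting map is then just the Franks map $H$, whose $\ZZ^2$-equivariance \emph{is} statement (1). Specialness does not give this directly, because $F^{-1}(z+n)\neq F^{-1}(z)+A^{-1}n$ for $n\notin A\ZZ^2$. Proposition~\ref{2 prop semiconj on stable} only yields $H(x+n)-n\in\tildeL^s(H(x))$, and specialness adds merely that this $L^s$-offset is constant along unstable leaves. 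The paper imports the theorem from \cite{AGGS23,GX2023}, but reproves and generalizes this direction in Appendix~\ref{sec-app-a}:
\begin{enumerate}
\item define $\overline H(x)=H(x-n)+n$ on $\mathbf F_0=\mcf^u_F(x_0)+\ZZ^2$, which is equivariant by construction;
\item prove it is uniformly continuous via the special-leaf translation estimate \eqref{eq. A.1}$\Rightarrow$\eqref{eq. A.2};
\item extend it to the $F$-invariant set $\bar{\mathbf F}_0$ (Lemma~\ref{5 lem Finvariant}) and identify $\overline H=H$ there by expansivity of $A$;
\item pass from $\bar\Gamma\subset\bar{\mathbf F}_0$ (Claim~\ref{5 claim gamma inclusion}) to all of $\RR^2$.
\end{enumerate}
Finally, no volume-growth argument yields $\lambda^c(p,f)=\lambda^s(A)$. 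Topological conjugacies do not transfer expansion rates, and periodic Jacobians are not tied to $|\det A|$ even for special maps. For example, $\psi\circ A$, with $\psi$ a small diffeomorphism sliding points along the lines of $\mcl^u$, is special but has non-constant periodic Jacobian. That equality is a genuine rigidity statement proved separately in \cite{AGGS23,GX2023}.
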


\begin{maintheorem}\label{2 thm rigidity accessible}
	Let $f:\mtt$ be a $C^1$-smooth  $u$-accessible  DA map.
	If $f$ is  semi-conjugate to a $C^1$-smooth $u$-accessible Anosov map $g:\mtt$ via $h:\mtt$. 
	Then $f$ is Anosov and $h$ is a homeomorphism. 
	Moreover, if  $f$ and $g$ are $C^r$-smooth $(r>1)$, 
	then the homeomorphism $h$ is $C^r$-smooth along stable manifolds.
\end{maintheorem}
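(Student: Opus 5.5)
The proof will follow the three-step scheme announced in the introduction: (i) $h$ is injective, hence a conjugacy; (ii) the conjugacy is bi-H\"older; (iii) H\"older conjugacy together with the global splitting $E^c_f\oplus E^u_f$ forces $E^c_f$ to be uniformly contracting. Throughout we work with lifts $F,G,H$ to $\RR^2$. Since $h$ is homotopic to the identity, $H$ is at bounded distance from the identity, is $\ZZ^2$-equivariant and satisfies $H\circ F=G\circ H$. Both $F$ and $G$ are at bounded distance from the common linearization $A$. By the semi-conjugacy results of Franks type on the universal cover, there are equivariant maps $H_F,H_G$ to $A$ with $H_F=H_G\circ H$. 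The center leaves $\mcf^c_F$ map into stable leaves of $A$ under $H_F$ (bounded forward orbits), and likewise $\mcf^s_G$ under $H_G$.

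\textbf{Step 1: $H$ maps center leaves onto stable leaves.} Forward orbits of points on a center leaf of $F$ stay at bounded distance from each other, by domination and quasi-isometry of $\mcf^c_F$. Applying $H$, their images have bounded forward $G$-orbits, and so lie on a common $\mcf^s_G$-leaf. Next, show that $H$ maps unstable leaves of $F$ into unstable leaves of $G$. This is the step where $u$-accessibility enters. Since $g$ is Anosov and $u$-accessible, its unstable bundle depends on the negative orbit. Points on $\mcf^u_F(x)$ have backward $F$-orbits converging together, so their images have backward $G$-orbits converging, hence lie on $\mcf^u_G(H(x))$.

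\textbf{Step 2: injectivity.} Suppose $H(x)=H(y)$. By local product structure, write $y$ via a center and an unstable holonomy from $x$. The unstable displacement is then killed by Step 1, because $G$ expands it on the cover. So $x,y$ lie on one center leaf and $H$ collapses a center segment $I$. Using $u$-accessibility of $f$, transport $I$ by unstable holonomies and deck transformations, which $H$ commutes with. The aim is to produce collapsed center segments through a dense set of points, indeed through every point with uniform length, contradicting surjectivity onto stable leaves with the correct homotopy degree. I expect this to mirror the argument for the linear case, now with $G$'s stable foliation in place of the linear one. Injectivity together with equivariance and properness gives that $h$ is a homeomorphism.

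\textbf{Step 3 (main obstacle): hyperbolicity of $f$ in $C^1$.} First show that $h$ and $h^{-1}$ restricted to center/stable leaves are H\"older. The plan is to compare growth rates: $G$ contracts $\mcf^s_G$ at a uniform exponential rate, while $F$ on $\mcf^c_F$ has at most exponential expansion, with rate bounded by $\sup\|DF\|$. Since $h$ sends the $G$-orbit of a stable segment to the $F$-orbit of a center segment, and unstable holonomies are uniformly continuous, the lengths of iterated center segments must shrink. To get the uniform contraction $\|Df^n|_{E^c}\|\le C\lambda^{-n}$, I would argue by contradiction. Take a periodic point $p$ of $f$ with center exponent $\geq 0$, or, using a limiting invariant measure, an ergodic measure with nonnegative center exponent. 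Then, using $u$-accessibility, spread a non-contracted center segment around the torus. This contradicts the uniform exponential shrinking of all center segments forced by the conjugacy, via a Pliss-lemma/hyperbolic-times argument which avoids needing $C^{1+\alpha}$ distortion. Controlling lengths of center curves under $F^n$ without bounded distortion is where I expect the difficulty.

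Finally, once $f$ is Anosov and $C^r$, smoothness of $h$ along stable leaves will follow as in \cite{GS22}. Both $f$ and $g$ are $u$-accessible Anosov maps, so the standard argument via periodic data applies: $u$-accessibility forces the stable Lyapunov exponents to match at corresponding periodic points, and a Livšic/de la Llave argument then upgrades $h$ to $C^r$ along stable leaves.
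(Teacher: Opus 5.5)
Your Steps 1 and 2 are sound and essentially reproduce the paper's argument (Proposition~\ref{2 prop foliation on R2} and Lemma~\ref{4 lem conjugacy}). There, the set of points interior to collapsed center arcs is $\ZZ^2$-periodic and $\mcf^u_F$-saturated, so $u$-accessibility forces it to be empty. Note that $H(\mcf^u_F(x))=\mcf^u_G(H(x))$ holds without any accessibility hypothesis.

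The genuine gap is Step~3. What the proof needs is that $h^{-1}$ be H\"older along $\mcf^s_g$, i.e.\ that center arcs shrink at a \emph{uniform exponential} rate. Given that, $d_{\mcf^c_f}(f^{km}(p),f^{km}(x))\le C\, d_{\mcf^s_g}(g^{km}(q),g^{km}(y))^\alpha$ yields $\lambda^c(p,f)\le\alpha\lambda_g<0$ at every periodic point directly. Shadowing and transitivity, inherited from $g$ through $h$, then upgrade this to uniform contraction of $E^c_f$, with no Pliss lemma or distortion control.

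Your growth-rate comparison cannot supply this. Done correctly, by backward iteration on $\RR^2$ where $F^{-1}$ is Lipschitz on center leaves and $G^{-1}$ expands stable leaves exponentially, it proves H\"older continuity of $H|_{\mcf^c_F}$ only. That direction only yields lower bounds on center exponents, not the negative upper bound you need. Applied to $H^{-1}|_{\mcf^s_G}$, the same comparison would require exponential contraction of $F$ along $E^c_F$, which is the conclusion.

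The topological conjugacy by itself gives only rate-free shrinking of center arcs, which is compatible with neutral periodic points. In the invertible analogue, a construction as in Proposition~\ref{5 prop example} with $|\det A|=1$ and center eigenvalue $1$ at the fixed point keeps the domination $E^c\oplus E^u$ and the topological conjugacy to $A$, yet is not Anosov. So the ``uniform exponential shrinking forced by the conjugacy'' that your Pliss/hyperbolic-times contradiction rests on is exactly the unproved statement. ``Spreading a non-contracted segment by $u$-accessibility'' is not an argument as it stands, since pushing unstable holonomies forward lengthens them and loses uniform control.

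The missing idea is to transport a \emph{static} regularity property of $H$ rather than a dynamical one; this is where the quantitative, genuinely non-invertible $u$-accessibility is used.
\begin{itemize}
\item Since $H|_{\mcf^c_F(x)}$ and $H^{-1}|_{\mcf^s_G(H(x))}$ are monotone homeomorphisms, Lebesgue's theorem gives a point at which each is Lipschitz.
\item For $y\in\mcf^u_F(x,R)$ one has $H|_{\mcf^c_F(y)}={\rm Hol}^{\mcf^u_G}_{H(x),H(y)}\circ H|_{\mcf^c_F(x)}\circ{\rm Hol}^{\mcf^u_F}_{y,x}$. Combined with the uniform H\"older continuity of bounded-size unstable holonomies (Proposition~\ref{2 prop Holder foliation}) and the $\ZZ^2$-periodicity of $H$ and $\mcf^c_F$, this carries a pointwise H\"older estimate from one point to another with a controlled loss of exponent.
\item Theorem~\ref{3  thm dichotomy} joins any two points by at most $N$ unstable arcs of length at most $R$, all lifted to $\RR^2$. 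Hence the resulting constants are uniform (Lemma~\ref{4 lem Holder conjugacy}).
\end{itemize}
Your $C^r$ paragraph is acceptable as a citation of \cite{GS22} once $f$ is known to be Anosov, as the paper itself remarks, but as you structure it, it inherits the gap in Step~3. The paper's own $C^{1+\alpha}$ argument avoids this. It runs the same transport with a point of differentiability and $C^1$ holonomies, obtaining $C^{-1}<\|D_xH|_{\mcf^c_F}\|<C$ everywhere.
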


We will prove Theorem \ref{2 thm rigidity accessible} in Section  \ref{sec rigidty}. We note in advance that the proof of the $C^1$-regularity case is different from \cite{GX2023,AGGS23,GS22}. 
Assuming Corollary \ref{2 cor both special} and Theorem \ref{2 thm rigidity accessible}, 
we can prove Theorem \ref{main-thm-conjugacy} as follows.

\begin{proof}[Proof of Theorem \ref{main-thm-conjugacy}]	
	Let $f,g:\mtt$ satisfy the assumption of  Theorem \ref{main-thm-conjugacy}. 
	Namely, $f$ is a $C^{1}$-smooth  DA map and semi-conjugate to a $C^{1}$-smooth Anosov map $g$ via $h:\mtt$. 
	By Corollary \ref{2 cor both special}, we consider the following two cases.
	\begin{itemize}
		\item Let $f$ and $g$ be both special. 
		By the semi-conjugacy $h$ homotopic to ${\rm Id}_{\TT^2}$ and $(h\circ f)_*=(g\circ h)_*$, we have $f_*=g_*$. 
		We assume that $f$ and $g$ admit the same linearization $A:\mtt$.   
		Since a special toral Anosov map is topologically conjugate to its linearization, 
		we consider $h_0:\mtt$ a conjugacy between $g$ and $A$. 
		Then, $f$ is semi-conjugate to $A$ via $h_1:=h_0\circ h$.
		Applying Theorem \ref{2 thm rigidity special}, we obtain that $f$ is Anosov, 
		and $h_1$ and $h$ are homeomorphisms. 
		In particular, when $f$ and $g$ are $C^r$-smooth $(r>1)$, 
		by Theorem  \ref{2 thm rigidity special},  
		both $h_1$ and $h_0$ are smooth along each stable manifold, so is $h$.
		
		\item Let $f$ and $g$ be both $u$-accessible. 
		The conclusion follows from Theorem \ref{2 thm rigidity accessible} directly.
	\end{itemize}
	This completes the proof of Theorem \ref{main-thm-conjugacy}.
\end{proof}

\subsection{Comparison to DA diffeomorphisms on $\mathbb{R}^3$}\label{subsec DA diffeo}

In this subsection, we clarify our result of Theorem \ref{main-thm-conjugacy} 
with the rigidity of DA diffeomorphism on $\TT^3$ in \cite{GS20,HS21}. 
Moreover, following this philosophy, 
we give a new semi-conjugacy rigidity result for DA diffeomorphism on $\TT^3$ 
which will be proved in Appendix \ref{sec-app-c}.

Let $f:\mtt$ be a DA map with linearization $A:\mtt$. 
Let $g$ be an Anosov map homotopic to $A$.   
Denote their partially hyperbolic splittings by 
\[
E^c_f\oplus E^u_f, 
\quad L^s_A\oplus L^u_A,
\quad E^s_g\oplus E^u_g. 
\]
Recall that $E^u_f$ and $E^u_g$ may depend on the choice of negative orbits.

Let $\phi:\TT^3\to \TT^3$ be a diffeomorphism homotopic to toral Anosov automorphism $B:\TT^3\to \TT^3$. 
Let $\psi$ be an Anosov diffeomorphism homotopic to $B$. 
Assume that they admit invariant partially hyperbolic splittings
\[
E^s_\phi\oplus E^c_\phi\oplus E^u_\phi,
\quad L^{s}_B\oplus L^{c}_B\oplus L^u_B,
\quad  E^{s}_\psi\oplus E^{c}_\psi\oplus E^u_\psi, 
\]
where $E_\psi^{s}$ and $L^{s}_B$ are strong stable bundles, 
while $E_\psi^{c}$ and $L^{c}_B$ are weak stable bundles. 
Recall that for the diffeomorphism case, 
there exist semi-conjugacies $h_B$ and $h_\psi$ from $\phi$ to $B$ and $\psi$, respectively.

\begin{thm}[\!\cite{HS21,GS20,HU14}\,]\label{thm DA diffeo}
	Let $\phi$ be given as above and $C^{1+\alpha}$-smooth. 
	Then there is the following dichotomy:
	\begin{itemize}
		\item either  $\phi$ is $su$-integrable, 
		i.e., the bundle $E^{s}_\phi\oplus E^u_\phi$ is joint-integrable.
		
		\item or $\phi$ is $su$-accessible, 
		i.e., any two points  can be connected by paths tangent to $E^{s}_\phi$ or $E^{u}_\phi$.
	\end{itemize}
Moreover, if $\phi$ is $su$-integrable, 
then $\phi$  is Anosov and  $h_B$ is a conjugacy smooth along  center manifolds.
\end{thm}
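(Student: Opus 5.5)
We would follow the strategy of \cite{HU14,HS21,GS20}. First we establish the dichotomy by classifying accessibility classes; then, in the $su$-integrable case, we use the semi-conjugacy $h_B$ together with the $su$-holonomies to show that $h_B$ is injective and that the center direction is uniformly contracting. The hardest part is the last step, where $C^{1+\alpha}$ regularity is needed to make center holonomies $C^1$.

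\textbf{The dichotomy.} Since $\dim E^c_\phi=1$, the theory of accessibility classes for partially hyperbolic diffeomorphisms on $3$-manifolds (as in \cite{HU14}) applies. Either $\phi$ is $su$-accessible, or the union $\Gamma(\phi)$ of non-open accessibility classes is a nonempty, compact, $\phi$-invariant lamination whose leaves are tangent to $E^s_\phi\oplus E^u_\phi$. In the second case we lift to $\RR^3$ and use the Franks semi-conjugacy $H_B$, which is at bounded distance from the identity.

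\begin{itemize}
\item Strong stable and unstable leaves of $\Phi$ are quasi-isometric. They are therefore sent by $H_B$ into leaves of $L^s_B$ and $L^u_B$, so each leaf of the lift of $\Gamma(\phi)$ is mapped onto a translate of the plane $L^s_B\oplus L^u_B$.
\item $B$ is irreducible, so the projection of $L^s_B\oplus L^u_B$ to $\TT^3$ is dense and has no compact leaves.
\item Suppose $\Gamma(\phi)\neq\TT^3$. Then a complementary region of $\Gamma(\phi)$ is bounded by $su$-leaves, and it lifts to a region lying between two parallel planes at bounded distance that is invariant under the $\ZZ^3$-translations preserving it.
\item Density of the irrational plane makes this impossible. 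Hence $\Gamma(\phi)=\TT^3$, i.e.\ $E^s_\phi\oplus E^u_\phi$ is integrable to an $su$-foliation $\mathcal{F}^{su}_\phi$.
\end{itemize}

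\textbf{Injectivity of $h_B$ in the integrable case.} The lifted map $H_B$ sends each leaf of $\tilde{\mathcal{F}}^{su}_\phi$ homeomorphically onto a plane parallel to $L^s_B\oplus L^u_B$. This follows from uniqueness of the strong stable and unstable points in the product structure of a plane, together with the quasi-isometry just used. Consequently each fiber $H_B^{-1}(y)$ meets each $su$-leaf in at most one point, so it is a compact, possibly trivial, center segment. The $su$-holonomy between center leaves carries fibers to fibers. Suppose some fiber were nontrivial. Since $\mathcal{F}^{su}_\phi$ is minimal (its leaves are dense, being close to irrational planes), the collapsed center intervals would fill an open dense set. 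They are also $\phi$-invariant with uniformly bounded length. Combining this with the uniform expansion and contraction in the $su$-directions, we would derive a contradiction via a volume or length count along center leaves under iteration. Hence $h_B$ is a homeomorphism.

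\textbf{Center contraction and regularity (main obstacle).} It remains to show that $E^c_\phi$ is uniformly contracting and that $h_B$ is smooth along center leaves. For $\phi\in C^{1+\alpha}$, the $su$-holonomies between center leaves are $C^1$, since we are inside the $C^1$ foliation theory of \cite{GS20,HS21}. The center derivative cocycle is then $su$-holonomy invariant. By $su$-minimality and a Liv\v{s}ic-type argument over periodic points, every periodic center exponent equals $\lambda^c(B)<0$. This yields uniform contraction, so $\phi$ is Anosov, and we build a $C^{1+\alpha}$ center-affine structure intertwined by $h_B$, giving smoothness of $h_B$ along center manifolds. I expect this step to be the main difficulty, since it is exactly where $C^{1+\alpha}$ is needed for the holonomies to be $C^1$. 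I would first try to prove the equality of periodic center exponents by comparing center lengths of nested holonomy images of a small center arc under $\phi^n$ and $B^n$.
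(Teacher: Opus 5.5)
The paper does not prove this statement; it is quoted from \cite{HU14,HS21,GS20}. Your sketch therefore has to stand on its own, and it has genuine gaps.

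The first is a false step on which both your dichotomy and your injectivity arguments rest. Quasi-isometry does not make $H_B$ send leaves of $\mcf^s_\phi$ into leaves of the strong stable foliation tangent to $L^s_B$. If $y\in\mcf^s_\Phi(x)$ for the lift $\Phi$, then $B^nH_B(x)$ and $B^nH_B(y)$ stay at bounded distance for $n\geq 0$. This only places $H_B(y)$ in the plane $H_B(x)+L^s_B\oplus L^c_B$; the weak stable component is unconstrained.

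In general strong stable leaves are genuinely not preserved. Take an $su$-accessible Anosov diffeomorphism $\phi$ that is $C^1$-close to $B$ (such maps exist, since accessibility is open and dense for one-dimensional center). Then $h_B$ is a conjugacy and always maps $\mcf^u_\phi$ to the unstable foliation of $B$. If it also mapped $\mcf^s_\phi$ to the strong stable foliation of $B$, it would carry $su$-accessibility classes to $su$-accessibility classes. Then $\phi$ would inherit the non-open classes of $B$, namely the leaves of the linear foliation tangent to $L^s_B\oplus L^u_B$, contradicting accessibility. This is why Proposition~\ref{prop appendix DA topo} lists only $\tau=u,c,cs,cu$, and why Theorem~\ref{thm appendix DA} has to \emph{assume} that the semi-conjugacy preserves $\mcf^s$. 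Hence neither ``each leaf of the lift of $\Gamma(\phi)$ maps onto a translate of $L^s_B\oplus L^u_B$'' nor ``$H_B$ is a homeomorphism from each $su$-leaf onto a plane'' follows from what you wrote.

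The second gap is exactly where the dynamics must enter. Lying at bounded distance from irrational planes does not make leaves dense. A Denjoy-type blow-up of the linear foliation along countably many leaves remains almost parallel to it, yet it has $I$-bundle gaps and is not minimal. So ``density of the irrational plane'' rules out neither complementary regions of $\Gamma(\phi)$ nor non-minimality of $\mcf^{su}_\phi$. Such gaps are precisely what the collapsed center segments of a non-injective $H_B$ would produce, so your injectivity step is circular, and the ``volume or length count'' is not an argument.

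By contrast, minimality would settle injectivity at once, provided you also know that $su$-holonomy carries $H_B$-fibers to $H_B$-fibers. The union of interiors of nontrivial fibers would then be open and $su$-saturated, hence all of $\TT^3$. That is absurd, because fiber endpoints lie in no fiber interior.

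The exponent step is not argued either. Holonomy invariance of $D\phi|_{E^c}$ is automatic, since $\phi$ commutes with $su$-holonomies. A Liv\v{s}ic argument applied to it gives no link to the value $\lambda^c(B)$. You need a mechanism that ties center lengths for $\phi$ to those for $B$. For example, use $H_B$ to conjugate the $C^1$ $su$-holonomies, composed with deck translations, on a center leaf to the dense translation group of an $L^c_B$-line, and then exploit $\Phi$ at a periodic point.

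You leave this, which is the heart of the rigidity statement, as an unresolved ``main obstacle''. Moreover, passing from periodic exponents to uniform center contraction needs density of periodic measures, which is only available once $h_B$ is already known to be a conjugacy.
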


The pre-image sets of non-invertible maps $f, A, g$ can be regarded as their strong stable directions 
(see also \cite{AGGS23,GS22} for some discussion). 
Then $E^c_f$ and $ E^u_f$ correspond to $E^c_\phi$ and $E^u_\phi$, respectively. 
And the special property and $u$-accessibility of $f$ are 
corresponding to the $su$-integrability and $su$-accessibility of $\phi$, respectively. 
In particular, in the case of non-invertible maps on $\TT^2$, 
a conjugacy or a semi-conjugacy on $\TT^2$ automatically matches the pre-image sets. 
This can be seen as the conjugacy matches strong stable foliations in the case of diffeomorphisms on $\TT^3$.
Thus, Theorem \ref{main-thm-conjugacy} and Theorem \ref{thm d dichotomy} 
can be seen as a non-invertible version of Theorem \ref{thm DA diffeo}. 

It should mention again that all results contained in Theorem \ref{thm DA diffeo} need $C^{1+\alpha}$-regularity, 
while most results in our work are under the $C^1$ assumption. 
Since Theorem \ref{main-thm-conjugacy} includes the semi-conjugacy rigidity for the accessible case,  
we also adapt this semi-conjugacy rigidity to $\TT^3$ DA diffeomorphism.

\begin{thm}\label{thm appendix DA}
	Let $\phi$ and $\psi$ be given as above and  $C^{1+\alpha}$-smooth.
	If the semi-conjugacy $h_\psi$ maps the foliation $\mcf_\phi^s$ to $\mcf_\psi^s$. 
	Then, $\phi$ is an Anosov diffeomorphism and $h_\psi$ is a conjugacy smooth along  center manifolds.
\end{thm}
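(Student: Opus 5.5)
The plan is to transport the problem to the universal cover and run the three-step scheme (i)--(iii) from the introduction, with $\mcf^s_\phi$ playing the role of the pre-image sets.

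\textbf{Setup on $\RR^3$.} Let $\Phi,\Psi$ be lifts of $\phi,\psi$ and $H$ the lift of $h_\psi$, which is at bounded distance from the identity and satisfies $H\circ\Phi=\Psi\circ H$. Since $\phi$ is a DA diffeomorphism, the results quoted with Theorem~\ref{thm DA diffeo} give a $\Phi$-invariant center foliation $\mcf^c_\Phi$ (and in fact $\mcf^{cs}_\Phi$, $\mcf^{cu}_\Phi$) quasi-isometric and at bounded Hausdorff distance from the linear foliations of $B$. The same holds for $\Psi$. Because $H$ is at bounded distance from the identity and intertwines the dynamics, a point $y$ lies in $\mcf^{u}_\Phi(x)$ exactly when $d(\Phi^{-n}x,\Phi^{-n}y)$ stays bounded. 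Hence $H$ maps unstable leaves into unstable leaves, and by the same boundedness argument it maps center-stable leaves into center-stable leaves, so it maps center leaves into center leaves. By hypothesis $H(\mcf^s_\Phi(x))\subset\mcf^s_\Psi(H(x))$.

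\textbf{Step (i): $h_\psi$ is a homeomorphism.} I will show that $H$ is injective. Suppose $H(x)=H(y)$. Then $\Phi^n x$ and $\Phi^n y$ stay at bounded distance for all $n\in\ZZ$. The bounded future orbit puts $y\in\mcf^{cs}_\Phi(x)$, and the bounded past orbit puts $y\in\mcf^{cu}_\Phi(x)$, so $y\in\mcf^c_\Phi(x)$. On center leaves, $H$ restricts to a monotone map of lines. Its fibres are therefore compact center segments $I$ with $|\Phi^n I|$ bounded for all $n$. To rule these out I will use the $s$-foliation hypothesis. The stable holonomy of $\phi$ between center leaves inside a $cs$-leaf is conjugated by $H$ to the stable holonomy of $\psi$, because $H$ preserves both $s$- and $c$-leaves. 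So the set of nontrivial fibres is saturated by $\mcf^s_\phi$ and by $\mcf^u_\phi$, hence $su$-saturated. By the dichotomy of Theorem~\ref{thm DA diffeo}, either $\phi$ is $su$-integrable, in which case the theorem already gives that $\phi$ is Anosov, or $\phi$ is accessible. In the accessible case the saturated set is empty or everything. "Everything" is impossible, since the fibres are disjoint intervals whose lengths do not shrink along a dense orbit, contradicting the fact that $H$ is surjective and continuous onto a line. So $h_\psi$ is a homeomorphism.

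\textbf{Steps (ii)--(iii): Hölder regularity and hyperbolicity.} Since $h_\psi$ is a homeomorphism mapping $s$, $u$ and $c$ leaves to the corresponding leaves, I will show it is bi-Hölder along center leaves by comparing growth. There are uniform constants with
\[
|\Phi^n I|\le C\mu^n|I| \quad\text{and}\quad |\Psi^{n}J|\ge c\,\nu^{n}|J|,
\]
where $\nu$ is the weak contraction rate of $\psi$ and $J=H(I)$. Choosing $n$ so that $\Psi^n J$ has unit size gives the Hölder estimate; the inverse is handled the same way. Then I will argue that $\phi$ is uniformly contracting on $E^c_\phi$. Suppose instead that some ergodic measure $\mu$ of $\phi$ has center exponent $\ge0$, and take a Pesin-generic point (or, in $C^1$, use a Pliss-type hyperbolic-time argument on a center curve). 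Along such orbits, center segments of fixed small size do not shrink exponentially. Under the bi-Hölder $H$ their images shrink at least like $\nu^{n\theta}$, so a segment of definite size must be mapped to one of exponentially small size. This contradicts the lower Hölder bound once we iterate $\Phi^n$ of a segment whose length stays bounded below. Hence every invariant measure has negative center exponent, $E^c_\phi$ is uniformly contracting, and $\phi$ is Anosov. Smoothness of $h_\psi$ along center leaves then follows from the $C^{1+\alpha}$ argument of \cite{GS20} for one-dimensional center leaves with matched periodic data.

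\textbf{Main obstacle.} I expect the hard step to be (iii): converting Hölder control into uniform center contraction using only the partially hyperbolic structure. I would try the Pliss-lemma, hyperbolic-times approach first.
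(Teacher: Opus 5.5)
Your Step (i) is essentially the paper's argument. The nondegenerate fibres form an $s$- and $u$-saturated set because $h_\psi$ intertwines the holonomies inside $cs$- and $cu$-leaves, so accessibility makes that set empty. (The clean reason it cannot be everything is that a monotone map of a line has at most countably many nondegenerate fibres.) The $su$-integrable case is also handled by Theorem~\ref{thm DA diffeo} exactly as in the paper.

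The genuine gap is Step (ii). Your growth comparison only proves that $H|_{\mcf^c_\Phi}$ is H\"older. Iterating backwards until $\Psi^{-n}J$ has unit size uses the uniform expansion of $\Psi^{-1}$ along the weak stable leaves of $\psi$, together with the trivial bound $|\Phi^{-n}I|\le C\mu^n|I|$. For $H^{-1}$ you would need a uniform \emph{lower} bound on the growth of center segments under $\Phi^{-n}$. That is exactly the uniform contraction of $E^c_\phi$ you are trying to prove, so ``the inverse is handled the same way'' is circular. Step (iii) uses precisely the H\"older bound for $H^{-1}$, so the Anosov conclusion is not reached.

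Your Steps (ii)--(iii) never use the hypothesis on $\mcf^s_\phi$ or accessibility. So they would show that every DA diffeomorphism conjugate to an Anosov one is Anosov, and that is false. Perturb $B$ near a fixed point, keeping the linear center foliation invariant, so that the center dynamics there becomes $t\mapsto t-ct^3$. The map stays partially hyperbolic and conjugate to $B$, but it is not Anosov. Along that center leaf the inverse conjugacy is of order $(\log(1/s))^{-1/2}$, so it is not H\"older.

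The missing idea is how the paper obtains the H\"older bound for $h_\psi^{-1}$:
\begin{itemize}
\item The restriction of $h_\psi^{-1}$ to a center leaf of $\psi$ is a monotone homeomorphism, hence Lipschitz at some point.
\item Because $h_\psi$ maps the $s$-, $u$-, $cs$-, $cu$- and $c$-foliations of $\phi$ to those of $\psi$, it conjugates the $s$-holonomies inside $cs$-leaves and the $u$-holonomies inside $cu$-leaves. These holonomies are uniformly H\"older.
\item Quantitative $su$-accessibility (boundedly many legs of bounded length) then transports the Lipschitz estimate at one point to a uniform H\"older estimate for $h_\psi^{-1}|_{\mcf^c_\psi}$ everywhere.
\end{itemize}
This is the second place where the $s$-hypothesis and accessibility are indispensable.

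Once you have that bound, your Step (iii) can work, but the paper's route is simpler and avoids Pesin theory and the Pliss lemma. The H\"older bound forces every periodic center exponent of $\phi$ to be at most $\alpha$ times the uniform weak-stable exponent bound of $\psi$. Shadowing plus transitivity, inherited through the conjugacy, then upgrade this to uniform contraction.

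Likewise, in your final step the ``matched periodic data'' is not established by anything you have proved. The paper gets smoothness along center leaves from \cite{GY24}, which again relies on the conjugacy preserving the strong stable foliation.
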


We will prove this additional result in Appendix \ref{sec-app-c}. 
Since there is the rigidity for $su$-integrable case in Theorem \ref{thm DA diffeo}, 
we mainly focus on the $su$-accessible case where the $C^1$ argument also works.

\subsection{Dynamics of DA on $\mathbb{R}^2$}\label{subsec DA}

We conclude the preliminary section by collecting some useful properties of DA maps.
Let $f:\mtt$ be a DA map with linearization $A:\mtt$.  
Let $F:\mrr$ be a lift of $f$.
We still denote a lift of $A:\mtt$ by $A:\mrr$,
and denote the stable/unstable bundles and foliations of $A$ 
by $L^{s/u}$ and $\mcl^{s/u}$ on $\TT^2$, 
$\widetilde{L}^{s/u}$ and $\tildeL^{s/u}$ on $\RR^2$, respectively. 

\begin{prop}[\!\cite{AGGS23}\,]\label{2 prop minimal foliation}
	The foliations $\mcl^{s/u}$ are minimal. 
	Moreover, for any $x\in\RR^2$ and $k\in \NN$, the set
	\[\bigcup_{n\in A^k\ZZ^2}\tildeL^{\sigma}(x+n)\]
	is dense in $\RR^2$, where $\sigma=s,u$.
\end{prop}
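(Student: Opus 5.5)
Since $A$ is a hyperbolic matrix in $GL(2,\ZZ)$ with $|\det A|\ge 2$ (non-invertible case) or $|\det A|=1$, its eigenvalues $\mu_s,\mu_u$ are real and irrational: the characteristic polynomial $t^2-({\rm tr}A)t+\det A$ has no rational root. Indeed, a rational root would be an integer dividing $\det A$, and hyperbolicity together with integrality excludes this. I will argue for $\sigma=u$; the case $\sigma=s$ is identical.

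First, minimality of $\mcl^u$. The leaf $\tildeL^u(x)$ is the line $x+\RR v_u$, where $v_u$ spans $\widetilde L^u$. If $v_u$ had rational slope, then $v_u$ would be an eigenvector with a rational direction, hence an integer eigenvector with eigenvalue $\mu_u\in\mathbb{Q}$, which we have just excluded. So the slope of $v_u$ is irrational. An irrational-slope linear foliation of $\TT^2$ has every leaf dense, which is the classical Kronecker theorem. Hence $\mcl^u$ is minimal.

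Second, the statement about $A^k\ZZ^2$. The lattice $\Lambda_k:=A^k\ZZ^2$ has finite index $|\det A|^k$ in $\ZZ^2$, so $\RR^2/\Lambda_k$ is again a torus $\TT_k$. The union $\bigcup_{n\in\Lambda_k}\tildeL^u(x+n)$ is exactly the full preimage in $\RR^2$ of the projected leaf of the line $x+\RR v_u$ in $\TT_k$. It is therefore dense in $\RR^2$ iff that projected line is dense in $\TT_k$. To see that it is, conjugate by the linear isomorphism $A^k$, which carries $\ZZ^2$ onto $\Lambda_k$, so $\TT^2\cong\TT_k$. Since $A^k$ preserves the direction $\widetilde L^u$, the pullback of the direction $v_u$ is again $v_u$. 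So the question reduces to density in $\TT^2$ of a line in direction $v_u$, which holds by the first step.

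Equivalently, one can argue directly that a line of irrational slope with respect to any lattice is dense, and that irrationality with respect to $\Lambda_k$ follows because $\Lambda_k\otimes\mathbb{Q}=\mathbb{Q}^2$.

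The only real point is the irrationality of the eigendirection, and this comes from the absence of rational eigenvalues. That is immediate: an integer eigenvalue $\mu$ with $|\mu|\neq 1$ and integer cofactor $\det A/\mu$ would force the other eigenvalue to be an integer too. Both eigenvalues would then be integers, and the $u$-one has $|\mu_u|>1$. We must then check that this case is excluded by "linear Anosov map" as used in the paper, namely irreducibility. This may need the convention that $A$ has no rational eigenvectors. I expect this convention (irreducibility over $\mathbb{Q}$, as in \cite{AGGS23}) to be the step requiring care, and I would state it explicitly as part of the standing hypothesis on $A$.
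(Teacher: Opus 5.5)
Your proof is correct and is the standard argument for this cited fact; the paper gives no proof of its own and defers to \cite{AGGS23}. You use irrational eigendirections plus Kronecker for minimality, then pull back by the linear isomorphism $A^k$ (which fixes the directions $\widetilde{L}^{s/u}$ and carries $\ZZ^2$ onto $A^k\ZZ^2$) for the density statement.

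The point you left open in your last paragraph is automatic, so do not add irreducibility as a hypothesis. What excludes rational eigenvalues is the stable eigenvalue, not $|\mu_u|>1$. Since $\det A\neq 0$, we have $0<|\mu_s|<1$, so $\mu_s$ is not an integer and hence not a rational root of the monic integer polynomial $t^2-({\rm tr}A)\,t+\det A$. Then $\mu_u={\rm tr}A-\mu_s\notin\mathbb{Q}$ as well. Finally, a rational eigenvector $v$ with $Av=\mu v$ would force $\mu\in\mathbb{Q}$, so neither eigendirection is rational.
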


The following result is adapted from the well-known \cite{F70} 
in the case of toral DA diffeomorphisms.

\begin{prop}[\!\cite{AH94}, Theorem 8.2.1\,]\label{2 prop semi-conj in R2}
	There exists a unique surjection $H:\mrr$ satisfying:
	\begin{enumerate}
		\item $H\circ F=A\circ H$;
		\item There exists $C>0$ such that $\| H-{\rm Id}_{\RR^2} \|_{C^0}<C$.
	\end{enumerate}
	Moreover, $H$ is uniformly continuous. 
	Assume further that $f$ is Anosov, then $H$ is uniformly bi-continuous.
\end{prop}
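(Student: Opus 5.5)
We prove Proposition \ref{2 prop semi-conj in R2} with the standard Franks shadowing scheme on $\RR^2$: a fixed point of a contraction in the Banach space of bounded continuous maps.

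\textbf{Existence and uniqueness.} Since $f$ is homotopic to $A$, the lift satisfies $F=A+\Phi$, where $\Phi:\RR^2\to\RR^2$ is $\ZZ^2$-periodic and hence bounded and uniformly continuous. We look for $H={\rm Id}+u$ with $u\in C_b(\RR^2,\RR^2)$. Then $H\circ F=A\circ H$ becomes
\[
u\circ F - A u = -\Phi .
\]
Decompose $\RR^2=\widetilde{L}^s\oplus\widetilde{L}^u$, write $u=u^s+u^u$, and split the equation into components. The unstable part is solved by $u^u = A^{-1}(u^u\circ F)+A^{-1}\Phi^u$. The stable part is solved by $u^s = A(u^s\circ F^{-1}) - \Phi^s\circ F^{-1}$, which uses that $F$ is a diffeomorphism of $\RR^2$. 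Both right-hand sides are contractions on $C_b$ in the sup norm, since $\|A^{-1}|_{\widetilde L^u}\|<1$ and $\|A|_{\widetilde L^s}\|<1$. This gives a unique bounded continuous $u$, which yields items (1) and (2) with $C=\|u\|_{C^0}$.

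Uniqueness within the class of maps at bounded distance from the identity follows the same way. If $H_1,H_2$ both satisfy (1) and (2), then $w=H_1-H_2$ is bounded and satisfies $w\circ F=Aw$. Iterating forward and backward in the two components forces $w=0$.

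\textbf{Surjectivity.} $H$ is a continuous map of the plane at bounded distance from the identity, so it has degree one on large discs. Concretely, $H$ is homotopic to ${\rm Id}$ on large circles without any point passing through a given target, and therefore $H$ is onto.

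\textbf{Uniform continuity.} $u$ is continuous, but we need uniformity. Note that $F(x+n)=F(x)+A n$ for $n\in\ZZ^2$. By uniqueness, $u(x+n)=u(x)$: indeed, $x\mapsto H(x+n)-n$ also solves the conjugacy equation with bounded distance, using $A\circ\mathcal T_n=\mathcal T_{An}\circ A$. Hence $u$ is $\ZZ^2$-periodic and so uniformly continuous, and $H$ inherits this.

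\textbf{Anosov case.} If $f$ is Anosov, $F$ is an Anosov diffeomorphism of $\RR^2$ with uniform constants. The key step is expansivity on the universal cover: two orbits of $F$ that stay within bounded distance of each other forever coincide. This follows from the global product structure of $\mcf^s_F,\mcf^u_F$, whose leaves are quasi-isometric and uniformly transverse in the plane. If $H(x)=H(y)$, then
\[
\|F^n x-F^n y\|\le 2C+\|A^nH(x)-A^nH(y)\|=2C
\]
for all $n$, so $x=y$ and $H$ is injective. For uniform bi-continuity, suppose $H(x_k)-H(y_k)\to 0$ while $\|x_k-y_k\|\ge\delta$. Translate by $\ZZ^2$ to assume the points lie in a compact set and pass to limits. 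By continuity of the orbits and boundedness, this produces either distinct points with equal $H$-image, or pairs whose orbits remain at bounded distance for a long time while being $\delta$-apart. Uniform expansivity, with an expansivity constant independent of the time window, contradicts both.

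\textbf{Main obstacle.} The hardest step is this last one: establishing uniform expansivity and global product structure for the lift $F$ of an Anosov local diffeomorphism. Here I would cite Ma\~n\'e--Pugh \cite{ManePugh1975} and \cite{Pr76}, as in \cite{AH94}, rather than reprove it.
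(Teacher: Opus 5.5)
The paper gives no proof of this proposition; it only cites Aoki--Hiraide. Your existence, uniqueness and surjectivity steps are fine: splitting $u\circ F-Au=-\Phi$ into the contractions $v\mapsto A^{-1}(v\circ F)+A^{-1}\Phi^u$ and $v\mapsto A(v\circ F^{-1})-\Phi^s\circ F^{-1}$ is the standard Franks scheme. Your Anosov-case injectivity argument via expansivity is also fine.

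\textbf{The gap is the uniform continuity step.} You claim $u=H-{\rm Id}$ is $\ZZ^2$-periodic. This is false in general, and it is exactly the phenomenon the paper is about. Put $H_n:=\mathcal{T}_{-n}\circ H\circ\mathcal{T}_n$. Then $A\circ H_n(x)=H(F(x)+An)-An$, but $H_n\circ F(x)=H(F(x)+n)-n$, so $H_n$ does not solve the same equation. The correct relation is
\[
H_{An}\circ F=A\circ H_n,\qquad n\in\ZZ^2 .
\]
Apply your uniqueness argument to the bounded family $w_n=H_n-H$. It kills the unstable components, because $An\in\ZZ^2$. For the stable components you would need $A^{-1}n\in\ZZ^2$, which fails once $|\det A|\ge 2$. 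All you get is $|w^s_n|\le 2C\|A|_{\widetilde L^s}\|^k$ for $n\in A^k\ZZ^2$, which is precisely Proposition \ref{2 prop semiconj on stable}. If $u$ were $\ZZ^2$-periodic, $H$ would descend to a semi-conjugacy between $f$ and $A$ (Remark \ref{2 rmk H to h}). By Theorem \ref{2 thm rigidity special} this forces $f$ to be special. So your claim would rule out non-special DA maps, and Anosov maps not conjugate to their linearizations.

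\textbf{How to repair it.} Both contractions preserve the closed subspace of bounded \emph{uniformly} continuous maps:
\begin{itemize}
\item $F$ is Lipschitz on $\RR^2$, since $DF$ is $\ZZ^2$-periodic.
\item $F^{-1}$ is Lipschitz, since $F^{-1}(y+An)=F^{-1}(y)+n$ makes $DF^{-1}$ periodic under the finite-index lattice $A\ZZ^2$.
\item $\Phi^u$ and $\Phi^s\circ F^{-1}$ are uniformly continuous.
\end{itemize}
Hence the fixed point $u$ is uniformly continuous.

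\textbf{The Anosov case has the same problem.} Your compactness argument translates $x_k,y_k$ by $n\in\ZZ^2$. That changes $H(x_k)-H(y_k)$ by the uncontrolled stable vector $w_n(x_k)-w_n(y_k)$, so you cannot obtain a limit pair with equal $H$-images this way. Work with $F$-orbits instead. From $|H(x)-H(y)|<\varepsilon$ you get $|F^mx-F^my|\le 2C+\max\{\|A\|,\|A^{-1}\|\}^M\varepsilon$ for $|m|\le M$. So uniform continuity of $H^{-1}$ follows from a finite-window uniform expansivity of $F$: for each $\delta>0$ there is $M$ with $\sup_{|m|\le M}|F^mx-F^my|\le 2C+1\Rightarrow|x-y|<\delta$. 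This is a statement about $F$ alone, and it is where your appeal to Ma\~n\'e--Pugh and Przytycki belongs.
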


We say that $H:\RR^2\to \RR^2$ is \emph{commutative with the deck transformations} (or \emph{$\ZZ^2$-periodic})
if \[H\circ \mathcal{T}_n=\mathcal{T}_n\circ H,  \quad\forall\ n\in\ZZ^2,\]
where $\mathcal{T}_n:\RR^2\to \RR^2$ is a deck transformation for $n\in\ZZ^2$.

\begin{rmk}\label{2 rmk H to h}
	A semi-conjugacy $H$ in Proposition \ref{2 prop semi-conj in R2} is commutative with the deck transformations if and only if   it can descend to $\TT^2$ and hence induce a semi-conjugacy between $f$ and $A$.
\end{rmk}

Modulo the stable foliation, 
$H$ keeps commutativity with deck transformations. 
Note that $H$ is ``approaching'' commutative with $A^k\ZZ^2$ as $k\to +\infty$. 
Thus, we have the following property.

\begin{prop}[\!\cite{AGGS23,GX2023}\,]\label{2 prop semiconj on stable}
	Let $H$ be given in Proposition \ref{2 prop semi-conj in R2}. Then 
	\begin{enumerate}
		\item For any $x\in\RR^2$ and $n\in\ZZ^2$, $H(x+n)-n\in \tildeL^s\big(H(x)\big)$.
		\item There exists $C>0$ such that for any $x\in\RR^2$, $k\in\NN$ and $n\in A^k\ZZ^2$, 
		\[\Big| H(x+n)-H(x)-n \Big|<2C\cdot \|A|_{L^s}\|^k.\]
	\end{enumerate}
\end{prop}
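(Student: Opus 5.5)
The plan is to study the defect of $H$ under a deck transformation,
\[
D_n(x):=H(x+n)-n-H(x),\qquad x\in\RR^2,\ n\in\ZZ^2,
\]
and to show that it satisfies a twisted equivariance under $A$. Item (1) then follows from boundedness in forward time, and item (2) from pulling back in backward time.

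\textbf{Bounds and equivariance of $D_n$.} By Proposition \ref{2 prop semi-conj in R2}(2), $|H(y)-y|<C$ for all $y$. Hence
\[
|D_n(x)|\le |H(x+n)-(x+n)|+|H(x)-x|<2C
\]
uniformly in $x$ and $n$. Since $F$ is a lift of $f$ and $f_*=A$, we have $F(x+n)=F(x)+An$. Combining this with $H\circ F=A\circ H$ gives
\[
A\,D_n(x)=H(F(x+n))-An-H(F(x))=H(F(x)+An)-An-H(F(x))=D_{An}(F(x)).
\]
Iterating yields $A^jD_n(x)=D_{A^jn}(F^j(x))$ for all $j\ge 0$. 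Since $F$ is a diffeomorphism of $\RR^2$, the same identity also holds for $j<0$. In that case $A^{j}n$ need not be integral, but we only use it in the form below.

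\textbf{Item (1).} Write $D_n(x)=v^s+v^u$ with $v^s\in\widetilde L^s$ and $v^u\in\widetilde L^u$. Then $|A^jD_n(x)|=|D_{A^jn}(F^jx)|<2C$ for all $j\ge0$, because $A^jn\in\ZZ^2$. If $v^u\neq0$, the unstable component $A^jv^u$ would grow exponentially while $A^jv^s\to0$, which is a contradiction. Hence $v^u=0$, that is, $D_n(x)\in\widetilde L^s$. Equivalently, $H(x+n)-n\in\tildeL^s(H(x))$, since $\tildeL^s(H(x))=H(x)+\widetilde L^s$.

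\textbf{Item (2).} Let $n=A^km$ with $m\in\ZZ^2$. Apply the equivariance with $j=k$ at the point $F^{-k}(x)$:
\[
D_n(x)=D_{A^km}\big(F^k(F^{-k}x)\big)=A^kD_m(F^{-k}x).
\]
By item (1), $D_m(F^{-k}x)\in\widetilde L^s$, and its norm is less than $2C$. Therefore
\[
|D_n(x)|\le\|A|_{L^s}\|^k\cdot 2C,
\]
which is exactly the claimed inequality $|H(x+n)-H(x)-n|<2C\|A|_{L^s}\|^k$.

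\textbf{Main obstacle.} There is no serious difficulty. The only care needed is the bookkeeping of the lift relation $F(x+n)=F(x)+An$, which is where the hypothesis $f_*=A$ enters. In the exponential-growth step of item (1), one must also handle the constants coming from the angle between $\widetilde L^s$ and $\widetilde L^u$; this is done by using the Euclidean norm on the eigenlines, where $A$ acts by multiplication by scalars.
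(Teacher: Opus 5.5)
Your proof is correct and is essentially the standard argument behind the cited results, which the paper only summarizes as ``$H$ commutes with deck transformations modulo the stable foliation and approaches commutativity with $A^k\ZZ^2$.'' Concretely, the forward iterates $A^jD_n(x)=D_{A^jn}(F^jx)$ stay bounded by $2C$, which forces $D_n(x)\in\widetilde L^s$, and rewriting $D_{A^km}(x)=A^kD_m(F^{-k}x)$ gives the factor $\|A|_{L^s}\|^k$. One small correction: your aside that the identity also holds for $j<0$ is false for general $n$, because $F(y+v)\neq F(y)+Av$ when $v\notin\ZZ^2$; since you never use it, the argument is unaffected.
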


The following proposition collects several useful properties of the invariant foliations of $F$, 
which originate from \cite{AGGS23,HH21,HS21,U12},  and one can find the proof of present adaption in  \cite{GX2023}.

\begin{prop}[\cite{GX2023}]\label{2 prop foliation on R2}
	Let $f:\mtt$ be a  DA map. Let $F$ be the lift of $f$ on $\RR^2$.   Then
	\begin{enumerate}
		\item The center bundles $E^c_f$ and $E^c_F$ are integrable to foliations $\mcf_f^c$ and $\mcf_F^c$ respectively;
		
		\item The pair of foliations $\mcf_F^c$ and $\mcf_F^u$ admits the Global Product Structure, 
		i.e., for any $x,y\in\RR^2$, the manifold $\mcf_F^c(x)$ intersects with $\mcf_F^u(y)$ exactly once;
		
		\item The foliation $\mcf_F^\sigma \ (\sigma=c,u)$ is quasi-isometric, 
		i.e., there exists $C_1,C_2>0$ such that 
		\[d_{\mcf_F^\sigma}(x,y)<C_1|x-y|+C_2, \quad \forall x\in\RR^2\  {\rm and}\ \forall y\in \mcf_F^\sigma(x). \] 
	\end{enumerate}
	Moreover, assume further that the DA map $f$ is semi-conjugate an Anosov map $g$ whose lift on $\RR^2$ is $G$. 
	Then there is a unique semi-conjugacy $H:\mrr$ of $F$ and $G$ such that
	\begin{enumerate}
		\item There exists $C>0$ such that $\| H-{\rm Id}_{\RR^2} \|_{C^0}<C$.
		
		\item  $H$ preserves the unstable foliation, 
		i.e., $H(\mcf_F^u(x))=\mcf_G^u(H(x))$, for all $x\in\RR^2$. 
		Furthermore, the restrictions of $H$ to unstable manifolds are uniformly bi-H\"older continuous homeomorphisms. 
		Here, the uniformity means that the H\"older constants are independent of the manifolds and points.
		
		\item  $H$ preserves the center foliation. Furthermore, 
		\begin{itemize}
			\item For points $x, y\in\RR^2$, $y\in \mcf^c_F(x)$ if and only if $H(y)\in \mcf_G^s(H(x))$.
			
			\item The restriction $H|_{\mcf^c_F(x)}:\mcf^c_F(x)\to \mcf_G^s(H(x))$ is monotonic. 
			In particular, for any point $x\in\RR^2$, the set $H^{-1}(x)$ is
			either a single point or a uniformly compact local center leaf, 
			where the uniformity means that there is a uniform upper bound for the length of $H^{-1}(x)$ for every $x\in\RR^2$.
		\end{itemize}
		
		\item Let $\Gamma=\big\{x\in\RR^2\ |\ H^{-1}\circ H(x)=\{x\} \big\}$ be the set of $H$-injection points,
		and $\bar{\Gamma}$ be the closure of $\Gamma$. Then
		\begin{itemize}
			\item  The set $\bar{\Gamma}$ is unstable manifold saturated, 
			i.e.,  $\mcf^u_F(x)\subset \bar{\Gamma}$, for all $x\in\bar{\Gamma}$;
			
			\item  The restriction  $H|_{\mcf^c_F(x)}$ is not locally constant at all $x\in\bar{\Gamma}$.
		
			\item The set $\bar{\Gamma}$ is $\ZZ^2$-periodic, 
			i.e., $\bar{\Gamma}+\ZZ^2=\bar{\Gamma}$.
		\end{itemize}
	\end{enumerate}
\end{prop}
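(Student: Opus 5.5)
The plan is to prove the items in order: first the foliation-theoretic statements (1)--(3) for $F$, then build $H$ for the Anosov map $g$ and read off its properties from the characterization of leaves by growth rates. For (1), note that $E^c_f(x)$ depends only on $x$, so it is a genuine continuous line field on $\TT^2$. By Peano's theorem it has integral curves through every point, so the only issue is \emph{branching}. I would exclude branching on $\RR^2$ using the semi-conjugacy $H_A$ of Proposition \ref{2 prop semi-conj in R2}.
\begin{itemize}
\item The first step is a center-curve dichotomy. Any $C^1$ curve $\gamma$ tangent to $E^c_F$ satisfies $\mathrm{length}(F^n\gamma)\le C\mu^n\,\mathrm{length}(\gamma)$ with $\mu<\lambda$, by domination. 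Since $\|H_A-{\rm Id}\|_{C^0}<C$, the curve $H_A(\gamma)$ cannot have a nontrivial $\tildeL^u$-component, because $A^n$ would expand it at the rate of $A|_{L^u}$. Hence $H_A(\gamma)\subset \tildeL^s(H_A(x))$. I expect the comparison of rates to need the standard trick of passing to a power of $f$.
\item Next, suppose two center curves through $x$ split. Then the bigon they bound contains a short unstable arc $\eta$ joining them. Its forward images have length $\ge C\lambda^n|\eta|$, while by the previous step their endpoints stay on one stable line of $A$ up to bounded error.
\item Quasi-isometry of $\mcf^u_F$, which I would prove first via the usual Hammerlindl argument (unstable curves stay in a cone around $\widetilde L^u$ at large scale, since $H_A$ sends them to $\tildeL^u$-leaves), forces $|F^n\eta|$ to have a large $\tildeL^u$-component. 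This contradicts the previous step, so center leaves do not branch.
\end{itemize}
Running the same large-scale cone argument for center curves, which shadow $\tildeL^s$ under $H_A$, gives the quasi-isometry (3). For the global product structure (2), each center and unstable leaf is a properly embedded line lying in a bounded neighborhood of an $A$-stable or $A$-unstable line, respectively. Two such lines must cross, giving existence of an intersection. Uniqueness follows because two transverse intersection points of leaves of transverse one-dimensional foliations of $\RR^2$ would bound a disc, which is impossible by a Poincaré--Bendixson/index argument.

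For the semi-conjugacy part, $G$ is a lift of an Anosov map, so Proposition \ref{2 prop semi-conj in R2} applied to $g$ gives a uniformly bi-continuous $H_G$ with $H_G\circ G=A\circ H_G$. I would set $H:=H_G^{-1}\circ H_A$. Then $H\circ F=G\circ H$ and $\|H-{\rm Id}\|<C$, and uniqueness follows from expansivity of $G$ on $\RR^2$ at bounded distance. Every leaf is characterized by orbit growth.
\begin{itemize}
\item We have $y\in\mcf^u_F(x)$ iff $|F^{-n}x-F^{-n}y|\to0$. The corresponding statement holds for $G$, and since $H$ is at bounded distance, combined with global product structure this yields $H(\mcf^u_F(x))=\mcf^u_G(H(x))$.
\item We have $y\in\mcf^c_F(x)$ iff $|F^nx-F^ny|$ grows slower than $\lambda^n$. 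This is exactly the condition $H(y)\in\mcf^s_G(H(x))$.
\end{itemize}
Uniform bi-Hölder continuity on unstable leaves comes from comparing the exponential expansion rates of $F$ and $G$ along unstable leaves, using uniform expansion together with quasi-isometry. Monotonicity of $H|_{\mcf^c_F(x)}$ follows because $H$ maps the line $\mcf^c_F(x)$ continuously and surjectively onto the line $\mcf^s_G(H(x))$, and unstable holonomy preserves order on both sides. Hence each fiber $H^{-1}(z)$ is a point or a center interval. These intervals have uniformly bounded length because $\|H-{\rm Id}\|<C$ and the center foliation is quasi-isometric.

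For (4), unstable holonomy between center leaves is conjugated by $H$ to the unstable holonomy of $G$, which is a homeomorphism. Therefore collapsed center intervals are carried to collapsed center intervals, so the set of injection points $\Gamma$, and hence its closure $\bar\Gamma$, is $\mcf^u_F$-saturated. Local non-constancy of $H|_{\mcf^c_F(x)}$ at points of $\bar\Gamma$ follows from density of injection points together with monotonicity. Finally, $\ZZ^2$-periodicity holds because $H(x+n)-n\in\mcf^s_G(H(x))$, by the analogue of Proposition \ref{2 prop semiconj on stable} for $H_A$ and $H_G$. So translation by $n$ preserves the fiber structure, and injectivity of $H$ on center leaves is invariant under deck transformations.

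The main obstacle I expect is the no-branching and global product structure step. It rests on showing, with only $C^1$ regularity and only a continuous center field, that center curves are shadowed by $\tildeL^s$ at large scale. I would first try to obtain this from $H_A$ and domination alone, before falling back on the branching-foliation machinery of Burago--Ivanov.
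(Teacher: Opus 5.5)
There is a genuine gap in the last bullet of item (4), the $\ZZ^2$-periodicity of $\bar\Gamma$. Your justification uses only $H(x+n)-n\in\mcf^s_G(H(x))$. In fact your construction $H=H_G^{-1}\circ H_A$ never uses the hypothesis that $f$ is semi-conjugate to $g$ on $\TT^2$: it works verbatim when $f$ is merely homotopic to $g$, for instance $g=A$, $H=H_A$. In that generality the conclusion is false.

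To see this, take the map of Proposition \ref{5 prop example}. It is DA and has a source, so it is not Anosov. Hence it is not special (Theorem \ref{2 thm rigidity special}), and so it is $u$-accessible (Theorem \ref{thm d dichotomy}). If $\bar\Gamma$ for $H_A$ were $\ZZ^2$-periodic, the argument of Lemma \ref{4 lem conjugacy} would make $H_A$ injective. Then $H_A$ would be a homeomorphism of $\RR^2$ conjugating $F|_{\mcf^c_F(0)}$, which is repelling at the lifted source, to $A|_{\tildeL^s(0)}$, which is attracting. That is impossible.

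The underlying reason is that knowing $H(x+n)$ and $H(x)+n$ lie on one $G$-stable leaf says nothing about whether the fibres $H^{-1}H(x+n)$ and $H^{-1}H(x)+n$ agree. The missing idea is exactly what the paper uses. The lift of $h$ (which is homotopic to ${\rm Id}$) is a semi-conjugacy between $F$ and $G$ at bounded distance from the identity, so by uniqueness it \emph{is} $H$. Hence $H(x+n)=H(x)+n$ exactly. Then for $x\in\Gamma$ and $z\in H^{-1}H(x+n)$ one gets $H(z-n)=H(x)$, so $z=x+n$, which gives $\Gamma+\ZZ^2=\Gamma$ and therefore $\bar\Gamma+\ZZ^2=\bar\Gamma$.

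Three further steps need repair, though they are not fatal.
\begin{enumerate}
\item In your integrability sketch, domination compares $E^c_F$ and $E^u_F$ only at the same point. It gives no uniform bound ${\rm length}(F^n\gamma)\le C\mu^n\,{\rm length}(\gamma)$ with $\mu$ below $\lambda$ or below the unstable eigenvalue of $A$; the center is genuinely expanding near a source. So the large-scale shadowing of center curves by $\tildeL^s$ cannot be obtained this way. The paper does not reprove items (1)--(3) at all; it cites \cite{GX2023}. It then transfers every property of $H$ from the case $g=A$ via $H=H_G^{-1}\circ H_F$, using that $H_G$ is a bi-H\"older homeomorphism preserving $\tildeL^{s/u}$.
\item Your characterization $y\in\mcf^u_F(x)\iff |F^{-n}x-F^{-n}y|\to 0$ is false for DA maps: near a source of $F$ every point is backward asymptotic to it. You only need the forward implication, combined with uniform continuity of $H$; bounded distance alone is not enough.
\item Local non-constancy of $H|_{\mcf^c_F}$ on $\bar\Gamma$ does not follow from density of injection points, which you do not have. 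It follows from the $\mcf^u$-holonomy invariance you already set up: an interior point of a collapsed center interval has a product neighbourhood consisting of non-injective points.
\end{enumerate}
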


\begin{proof}	
	When $g$ is the linearization $A$ of $f$, the whole proposition has been proved in \cite{GX2023}, 
	except for the uniformly H\"older continuity of $H|_{\mcf_{F}^u}$ 
	whose proof is classical and can be seen in \cite[Theorem 19.1.2]{KH95} for example.  
	Even if one considers that $f$ is just homotopic to $g=A$,  
	the proposition also holds except for the $\ZZ^2$-periodicity of $\bar{\Gamma}$ (see \cite{GX2023}).   
	
	The properties on foliations $\mcf_{F}^{c/u}$ are independent of $g$, and hence holds as proved in \cite{GX2023}. 
	The properties of semi-conjugacy between $F$ and $G$, in the present case, 
	inherit from the ones of semi-conjugacy between $F$ and $A$. 
	Here, we give some explanation on this.
	
	Since $f$ is semi-conjugate to $g$ via $h$ homotopic to ${\rm Id}_{\TT^2}$, we have $f_*=g_*$. 
	Then we can assume that  $f$ and $g$ admit the same linearization $A$.  
	Let $H$ be a lift of $h$ such that it is a semi-conjugacy of $F$ and $G$. 
	Then $H$ is bounded from the identity.  
	One can see that the lift $H$ is the unique semi-conjugacy that we need. 
	And $H=H_G^{-1}\circ H_F$, where $H_F, H_G:\mrr$ are given by Proposition \ref{2 prop semi-conj in R2} 
	with $H_{\sigma}\circ \sigma=A\circ H_\sigma$ for $\sigma=F,G$.

	It is known that $H_G$ is a bi-H\"older homeomorphism which preserves the stable and unstable foliations. 
	Then, $H=H_G^{-1}\circ H_F$ and  $\bar{\Gamma}$ inherit the properties of the case in \cite{GX2023}
	where $f$ is homotopic to $g=A$, except for the  $\ZZ^2$-periodicity of $\bar{\Gamma}$.
	To show  $\bar{\Gamma}+\ZZ^2=\bar{\Gamma}$, it suffices to prove that $\Gamma+\ZZ^2=\Gamma$. 
	Let $x\in \Gamma, n\in \ZZ^2$, and $z\in H^{-1}\circ H(x+n)$.  
	Since $H$ is $\ZZ^2$-periodic, $H(x)=H(x+n)-n=H(z)-n=H(z-n)$.
	By assumption that $x\in \Gamma$ is an $H$-injection point, one has that $x=z-n$. 
	Thus, $H^{-1}\circ H(x+n)=\{z\}$ and $(x+n)\in \Gamma$.
\end{proof}

For a one-dimensional foliation $\mcf$ on $\RR^2$, define the holonomy map
\[{\rm Hol}^{\mcf}_{x,y}: \mcl(x)\to \mcl(y),  \]
by ${\rm Hol}^{\mcf}_{x,y}(z)=\mcf(z)\cap \mcl(y)$, for $z\in\mcl(x)$,
where $\mcl(x)$ and $\mcl(y)$ are transversals of $\mcf$ and respectively intersect with each leaf of $\mcf$ at most once.
Since the foliations $\mcf^u_F$ and $\mcf^c_F$ admit the Global Product Structure for the DA map $f$,  
the holonomy map of $\mcf^{u}_F$ given by 
\[{\rm Hol}^{\mcf^u_F}_{x,y}: \mcf^c_F(x)\to \mcf^c_F(y),  \]
is well-defined.  
For a size $R>0$, denote 
\[ \mcf^u_F(x,R):=\{y\in\mcf^u_F(x)\ |\ d_{\mcf^u_F}(x,y)<R \},\]
where $d_{\mcf^u_F}(\cdot,\cdot)$ is induced by the metric restricted to the unstable manifold. 
It is well-known that ${\rm Hol}^{\mcf^u_F}$ is H\"older continuous, 
and is $C^1$ when $f$ is $C^{1+\alpha}$-smooth (see \cite[Theorem 7.1]{Pesinbook04} for example).

\begin{prop}[\!\cite{Pesinbook04}\,]\label{2 prop Holder foliation}
	Let $f$ be a $C^1$-smooth DA map on $\TT^2$. 
	Then the holonomy maps of $\mcf^u_F$ are uniformly H\"older continuous, 
	namely, for any size $R>0$ there are constants $0<\beta<1$ and $C>1$ such that
	the holonomy maps induced by local foliation $\mcf^{u}_F(\cdot, R)$ are $(\beta,C)$-H\"older continuous.
	
	Moreover, if $f$ is $C^{1+\alpha}$-smooth, 
	then the holonomy maps of $\mcf^u_F$ are uniformly  $C^1$-smooth. 
	In particular, the upper bound of the derivative $\|D{\rm Hol}^{\mcf^u_F}_{x,y} \|$ depends only on 
	the size $R$ of foliation $\mcf^u_F$.
\end{prop}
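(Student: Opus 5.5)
The statement is the uniform H\"older continuity of the holonomy maps of $\mcf^u_F$ (uniform $C^1$-smoothness in the $C^{1+\alpha}$ case). My plan is to derive it from the standard theory of invariant foliations for partially hyperbolic diffeomorphisms. The needed ingredients all transfer to the lift $F$ because $F$ is uniformly close to a $\ZZ^2$-equivariant object.

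\textbf{Step 1: setting.} Work on $\RR^2$, with $F$ a partially hyperbolic diffeomorphism and splitting $E^c_F\oplus E^u_F$. The constants $C,\lambda,k$ and the domination are uniform, since they come from the compact inverse limit space $\TT^2_f$. Moreover $DF$ and its modulus of continuity are uniform, because $DF$ is $\ZZ^2$-periodic. So $F$ has bounded geometry, and every constant below depends only on $f$ and on the size $R$.

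\textbf{Step 2: H\"older holonomy for $C^1$ maps.} Fix $R$ and take $x$ and $y'\in\mcf^u_F(x,R)$. The holonomy is taken between local center transversals at $x$ and at $y'$, i.e.\ between $\mcf^c_F(x)$ and $\mcf^c_F(y')$ near these points. Take $z,w\in\mcf^c_F(x)$ at distance $\delta$, with images $z',w'$. Iterate backwards by $F^{-n}$: the unstable legs contract at rate $\lambda^{-n}$, while the center distance grows at most like $\mu^{n}$ with $\mu=\sup\|DF^{-1}\|$.
\begin{itemize}
\item Choose $n$ with $\mu^n\delta\approx$ a fixed small scale $\epsilon$.
\item At time $-n$, the four points form a quadrilateral whose unstable sides have length at most $R\lambda^{-n}$ and whose center side has length about $\epsilon$.
\item Local product structure at scale $\epsilon$ together with continuity of $E^c_F$ gives $d(F^{-n}z',F^{-n}w')\le C\epsilon$.
\item Pushing forward by $F^n$ along the center, with forward rate at most $\nu^n$ ($\nu=\sup\|DF\|$), gives $d(z',w')\le C\nu^n\mu^{-n}\delta\cdot\delta^{-1}$-type bounds.
\end{itemize}
Optimizing $n$ yields $d(z',w')\le C\delta^\beta$ with $\beta=\log\lambda'/\log(\mu\nu)$-type exponent, where $\lambda'$ is the domination gap. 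This is exactly the classical argument of \cite{Pesinbook04} (cf.\ \cite[Theorem 19.1.2]{KH95}), and uniformity follows from Step 1.

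\textbf{Step 3: $C^1$ holonomy when $f$ is $C^{1+\alpha}$.} Here I would use the standard formula expressing the derivative of the holonomy as a limit of ratios of center derivatives along backward orbits:
\[
D{\rm Hol}^{\mcf^u_F}_{x,y'}(z)=\lim_{n\to\infty}\frac{\|DF^{n}|_{E^c_F(F^{-n}z)}\|}{\|DF^{n}|_{E^c_F(F^{-n}z')}\|}.
\]
Convergence follows from the following estimate:
\begin{itemize}
\item $\log\|DF|_{E^c_F}\|$ is H\"older, since $DF$ is $C^\alpha$ and $E^c_F$ is H\"older.
\item $d(F^{-j}z,F^{-j}z')\le R\lambda^{-j}$.
\item Hence the log of the product is controlled by the geometric series $\sum_j C(R\lambda^{-j})^{\alpha\theta}$, where $\theta$ is the H\"older exponent of $E^c_F$.
\end{itemize}
A Lipschitz bound for center curves then upgrades this to $C^1$, with $\|D{\rm Hol}\|$ bounded by $\exp(C R^{\alpha\theta})$, a constant depending only on $R$.

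\textbf{Main obstacle.} The delicate point is that $E^u_F$ is not $\ZZ^2$-invariant, so uniformity cannot be obtained by compactness on $\TT^2$ directly. I would handle this via Remark \ref{2 rmk universal to limit}: every local configuration in $\RR^2$ projects to one along an orbit in the compact space $\TT^2\times\TT^2_f$, where the bundles are uniformly (H\"older) continuous \cite{Pr76}. This supplies the uniform constants used in Steps 2 and 3.
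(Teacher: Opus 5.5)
The paper does not prove this statement; it only cites the classical theory (\cite[Theorem 7.1]{Pesinbook04}). Your overall plan is the same: reduce to the standard invariant-foliation argument, with uniform constants coming from the $\ZZ^2$-periodicity of $DF$ and $E^c_F$ and from compactness of $\TT^2_f$. Step 1 and your obstacle paragraph are fine; note that Step 2 only needs uniform continuity of the bundles there, not H\"older continuity.

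\textbf{The gap in Step 2.} The estimate you write does not close. Fix the center side at time $-n$ at size $\epsilon$ and push $d(F^{-n}z',F^{-n}w')\le C\epsilon$ forward at rate $\nu=\sup\|DF\|$. This gives only $d(z',w')\le C\nu^n\epsilon$, which is not small. If you instead leave $n$ free, the leg contribution becomes $\nu^n R\lambda^{-n}\gtrsim R$, because $\sup\|DF\|$ is at least the unstable rate. So no optimization in $n$ yields $\delta^\beta$ from these ingredients, and the bound ``$C\nu^n\mu^{-n}\delta\cdot\delta^{-1}$'' is not a valid estimate.

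\textbf{The missing mechanism: how domination enters.} At time $-n$ you have
\[
d^c(F^{-n}z',F^{-n}w')\lesssim d^c(F^{-n}z,F^{-n}w)+b_n(z)+b_n(w),
\]
where $b_n$ are the lengths of the pulled-back unstable legs. When you push forward, use the \emph{center} derivative along the orbit of $z'$ and compare it with two things.
\begin{enumerate}
\item[(i)] The center derivative along the orbit of $z$. The two backward orbits are $R\lambda^{-j}$-close, so uniform continuity of $DF$ and $E^c_F$ returns the first term to $\delta$ up to a factor $e^{\eta n}$. Here $\eta>0$ can be as small as you like, at the cost of constants depending on $\eta$ and $R$.
\item[(ii)] The unstable derivative along the same orbit. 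This turns the leg terms into roughly $R\prod_j\|DF|_{E^c_F}\|/\|DF|_{E^u_F}\|\lesssim \gamma^n e^{O(\eta)n}$ with $\gamma<1$.
\end{enumerate}
Definition \ref{2 def ph endo} gives only \emph{pointwise} domination, so this comparison must be made along orbits. A global gap $\sup\|DF|_{E^c_F}\|<\inf\|DF|_{E^u_F}\|$, which your global constants implicitly require, is not available. Now take $n$ as large as allowed by $\mu^n\delta\le\epsilon$, so that local product structure applies at all intermediate times. This gives $d^c(z',w')\le C\delta^\beta$ for some $\beta>0$.

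\textbf{Step 3.} The ratio is inverted: from ${\rm Hol}_{x,y'}=F^n\circ{\rm Hol}_{F^{-n}x,F^{-n}y'}\circ F^{-n}$, the factor at $F^{-n}z'$ belongs in the numerator. This is harmless for the bound. More substantively, convergence of the product only gives a continuous candidate $\rho$. To conclude that ${\rm Hol}$ is differentiable with derivative $\rho$, you still need the following. Choose $n$ so that the center segment at time $-n$ has a fixed small size, while the legs have length $\lesssim R\lambda^{-n}$. Then the holonomy between the backward-iterated center curves must change center lengths by a factor tending to $1$ (the quadrilateral is nearly a parallelogram by continuity of the bundles). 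That is the actual content of the cited theorem, and ``a Lipschitz bound for center curves'' does not supply it.
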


\section{$u$-Accessibility of DA maps}\label{sec: dichotomy}

In this section, we prove Theorem \ref{thm d dichotomy}, 
i.e., a DA map on $\TT^2$ is either special, or $u$-accessible. 
Moreover, we give a quantitative estimation for the $u$-accessibility: 
any two points on $\TT^2$ can be connected by finitely many manifolds $\mcf_f^u$ 
which are projected from $\RR^2$ and have uniform size. 
This quantitative $u$-accessibility is also useful 
for the proof of Theorem \ref{2 thm rigidity accessible} in the next section.
   
\begin{thm}\label{3  thm dichotomy}
	Let $f:\mtt$ be a $C^1$-smooth DA map. 
	Then there is the following dichotomy:
	\begin{itemize}
		\item either $f$ is special.
		
		\item or $f$ is $u$-accessible. 
		Moreover, there are constants $R>0,N\in\NN$ and a point $w_0\in \TT^2$ with a lift $w^*\in\RR^2$	
		such that for any $x\in\TT^2$, there exist 
		\begin{enumerate}
			\item[-] $k$ points $x_i\in\TT^2 \ (0\leq i\leq k\leq N)$ with $x_k=w_0$ and $x_0=x$,
			\item[-] lifts $x'_i\in\RR^2$  of $x_i$ with $x'_k=w^*$,
		\end{enumerate}
		satisfying $x_{i+1}\in \pi\big( \mcf^u_F(x'_i,R)\big)$ for any $0\leq i\leq k-1$. 
   \end{itemize}  
\end{thm}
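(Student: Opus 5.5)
We assume $f$ is not special and first establish $u$-accessibility; the quantitative statement will then follow from compactness. Work on $\RR^2$ with the lift $F$ and consider, for $x\in\RR^2$, the set
\[
\widetilde{\rm Acc}(x)=\Big\{\text{points reachable from } x \text{ by finitely many jumps along } \mcf^u_F(\cdot)+n,\ n\in\ZZ^2\Big\},
\]
so that $\pi(\widetilde{\rm Acc}(x))={\rm Acc}^u(\pi(x))$ by Remark \ref{2 rmk universal to limit} together with density of lifted orbits.

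The key object is the accessibility class inside a center leaf. By the Global Product Structure of Proposition \ref{2 prop foliation on R2}, every translated unstable leaf $\mcf^u_F(y)+n$ meets $\mcf^c_F(x)$ exactly once; the latter is also transverse, since it is tangent to $D\mathcal{T}_n E^u_F$, which is close to $E^u_F$ by the cone-field argument. Hence ${\rm Acc}^u$ is determined by $C(x):=\widetilde{\rm Acc}(x)\cap\mcf^c_F(x)$. Non-specialness gives some $y$ and $n$ with $D\mathcal{T}_nE^u_F(y)\neq E^u_F(y+n)$, so $\mcf^u_F(y)+n$ and $\mcf^u_F(y+n)$ are distinct leaves through the same point. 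Going along one and returning along the other (a $u$-quadrilateral) moves points along center leaves nontrivially, so $C(x)$ contains at least two points near $y$.

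We next use dynamics to spread this. The accessibility classes are $F$-invariant up to deck translations: $F$ maps $\mcf^u_F$ to itself and $F(z+n)=F(z)+An$. Pulling back by $F^{-1}$, or equivalently iterating forward the holonomy pairs, we aim to show the set of center displacements produced by $u$-quadrilaterals accumulates at $0$ while also reaching arbitrarily far. The plan is to mimic the argument of \cite{GS22}, replacing minimality of unstable leaves, which is unavailable, with Proposition \ref{2 prop minimal foliation} applied to $H$-images. Since $H$ maps $\mcf^u_F$ to $\tildeL^u$-type leaves and $\bigcup_{n\in A^k\ZZ^2}\tildeL^u(x+n)$ is dense, translated unstable leaves through $A^k\ZZ^2$-translates come close to every point. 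Combined with the quadrilateral moves, this should show that the accessibility classes are open. Openness plus connectedness of $\TT^2$ gives ${\rm Acc}^u=\TT^2$. Alternatively, one can argue that a nonopen class would produce a closed $u$-saturated lamination whose tangencies force $E^u$ to be deck-invariant, contradicting non-specialness. I expect this step, openness without minimality, to be the main obstacle, and would first try to prove that the boundary of an accessibility class is $\mcf^u$-saturated and $\ZZ^2$-invariant, and then derive specialness from it.

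Finally, for the quantitative version: once classes are open, fix $w_0$ with lift $w^*$. For each $x$, a finite chain of bounded-length unstable segments joins $x$ to $w_0$. By openness, the same combinatorial chain with perturbed endpoints covers a neighborhood of $x$, using continuity of $\mcf^u_F$ on compact pieces. A finite subcover of $\TT^2$ then gives uniform $R$ and $N$, with each chain ending at the fixed lift $w^*$ after choosing the lifts backward from $w^*$.
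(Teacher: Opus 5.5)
There is a genuine gap, and it sits exactly where you place ``the main obstacle'': nothing in the proposal shows that accessibility classes are open when $f$ is not special. The $u$-quadrilateral/center-displacement picture is a three-dimensional heuristic and is not needed here. In dimension two, if $D\mathcal{T}_nE^u_F(y)\neq E^u_F(y+n)$, then the local foliations $\mcf^u_F(\cdot)$ and $\mcf^u_F(\cdot-n)+n$ near $y+n$ are transverse. So $\pi(y)$ is already an interior point of its class, and, by sliding along unstable leaves, so is every $\pi(z)$ with $y\in\mcf^u_F(z)$ (Claims \ref{3 claim y_0} and \ref{3 claim int point}).

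By connectedness of $\TT^2$, failure of $u$-accessibility therefore produces a point $x_0$ with $\mcf^u_F(x_0+n)=\mcf^u_F(x_0)+n$ for all $n\in\ZZ^2$ (Lemma \ref{3 lem x_0}). The entire difficulty is to show that this single deck-invariant leaf forces $f$ to be special everywhere. Continuity only gives deck-invariance of $E^u_F$ on $\overline{\mcf^u_F(x_0)+\ZZ^2}$, which need not be all of $\RR^2$ (Remark \ref{3 rmk u is not dense}). Your fallback (the boundary of a class is $\mcf^u_F$-saturated and $\ZZ^2$-invariant, ``then derive specialness'') does not close this gap. The complement of $\overline{\mcf^u_F(x_0)+\ZZ^2}$ is an open, $\ZZ^2$-invariant set saturated by all translated unstable leaves, so nothing in your outline prevents it from being a union of accessibility classes. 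Likewise, the claim that center displacements ``accumulate at $0$ and reach arbitrarily far'' is given no mechanism.

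The missing idea is to go through the semi-conjugacy $H$ from $F$ to $A$ on $\RR^2$ (Proposition \ref{2 prop semi-conj in R2}). Let $\bar\Gamma$ be the closure of the set $\Gamma$ of $H$-injection points. By Propositions \ref{2 prop minimal foliation} and \ref{2 prop semiconj on stable}, the set $H\big(\bigcup_n\mcf^u_F(x_0+n)\big)=\bigcup_n\tildeL^u(H(x_0+n))$ is dense, and this gives $\bar\Gamma\subset\overline{\mcf^u_F(x_0)+\ZZ^2}$ (Claim \ref{3 claim closure}). Hence every point of $\bar\Gamma$, in particular some $F$-fixed point $z_0$, has a deck-invariant unstable leaf (Lemma \ref{3 lemma xn}).

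Proposition \ref{2 prop special implies semi-conjugacy} (proved in Appendix \ref{sec-app-a}) then shows that such a $z_0$ forces $H$ to commute with $\ZZ^2$, i.e.\ $f$ is semi-conjugate to $A$. Theorem \ref{2 thm rigidity special} (from \cite{AGGS23,GX2023}) converts this into specialness. In effect it makes $H$ injective, so the complementary region above, which lies in the non-injectivity set of $H$, is empty. You do invoke Proposition \ref{2 prop minimal foliation} on $H$-images, but you aim it at openness rather than at the $\ZZ^2$-equivariance of $H$. The two substantive inputs, Proposition \ref{2 prop special implies semi-conjugacy} and Theorem \ref{2 thm rigidity special}, are absent from your proposal.

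A smaller point in the quantitative part: $w_0$ cannot be arbitrary. Choose it non-special, so that a whole neighborhood of $w_0$ reaches $w_0$ exactly in two steps of bounded size (as in Claim \ref{3 claim N0R0}). Otherwise, perturbing a chain starting from $x$ only lands near $w_0$, not on $w^*$.
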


Assuming Theorem \ref{3  thm dichotomy}, we first give the proof of Corollary \ref{2 cor both special}, 
i.e., if a DA map $f$ on $\TT^2$ is semi-conjugate to an Anosov map $g$, 
then they are simultaneously special or $u$-accessible.

\begin{proof}[Proof of Corollary \ref{2 cor both special}]
	Let $F,G:\mrr$ be two lifts of $f$ and $g$ respectively. 
	Let $H:\mrr$ be a lift of $h$ such that $H\circ F=G\circ H$. 
	Then $H(x+n)=H(x)+n$, for all $x\in\RR^2$ and $n\in\ZZ^2$.

	If $f$ is special, then 
	\[
	\mcf_F^u(x+n)=\mcf_F^u(x)+n,\quad \forall\, x\in\RR^2, n\in\ZZ^2.
	\]
	For any point $y\in \RR^2$, there exists a point $x\in H^{-1}(y)$. 
	Since $H$ is commutative with the deck transformations and
	preserves the unstable foliation (Proposition \ref{2 prop foliation on R2}),   
	one has for every $n\in\ZZ^2$,
	\begin{align*}
		\mcf_G^u(y+n)=  \mcf_G^u(H(x)+n) 
		&= \mcf_G^u(H(x+n))\\
		&= H\big(  \mcf_F^u(x)+n  \big)= H\big(  \mcf_F^u(x)  \big)+n =\mcf_G^u(y)+n.
	\end{align*}
	Hence $g$ is also special.
	
	If $f$ is non-special, then by Theorem \ref{3  thm dichotomy} $f$ is $u$-accessible. 
	It remains to prove that $g$ is also $u$-accessible. 
	Let $x,y\in\TT^2$. Let $x'\in h^{-1}(x)$ and $y'\in h^{-1}(y)$. 
	By Theorem \ref{3  thm dichotomy} again, 
	we assume that $\mcf^i \subset \RR^2 \ (1\leq i\leq k)$ are leaves of $\mcf^u_F$  
	such that the paths $\pi(\mcf^i)$ connect $x'$ and $y'$. 
	Since $H$ preserves the unstable manifolds, 
	$H(\mcf^i)$ are leaves of $\mcf^u_G$ and the paths $\pi\big(H(\mcf^i)\big)$ connect $x$ and $y$. 
	Hence $g$ is also $u$-accessible.
	
	This completes the proof of Corollary \ref{2 cor both special}.
  \end{proof}
  
Now we prove Theorem \ref{3  thm dichotomy} and hence get Theorem \ref{thm d dichotomy}.  
We divide the proof into two parts for clarity.

\subsection{Proof of the dichotomy part of Theorem \ref{3  thm dichotomy}}\label{subsec 3.1}

It is clear that if $f$ is special, then $f$ is not $u$-accessible.  
Conversely, let $f$ be not $u$-accessible. 
We will prove that $f$ is semi-conjugate to its linearization $A:\mtt$, 
then conclude that $f$ is special by Theorem \ref{2 thm rigidity special}. 
Moreover, by Remark \ref{2 rmk H to h}, 
it suffices to show that the semi-conjugacy $H:\RR^2\to\RR^2$ between the lifts $F$ and $A$ is commutative with the deck transformations. 
	
\begin{lem}\label{3 lem x_0}
	Let $f$ be not $u$-accessible. 
	Then there exists $x_0\in\RR^2$ such that 
	\begin{align}
		\mcf_F^u(x_0+n)=\mcf_F^u(x_0)+n, \quad \forall\, n\in\ZZ^2. \label{eq. x_0}
	\end{align}
\end{lem}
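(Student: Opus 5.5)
Since $f$ is not $u$-accessible, fix $y\in\TT^2$ with ${\rm Acc}^u(y)\neq\TT^2$. I will show that at some point $x_0\in\RR^2$ all deck-translates of $\mcf^u_F(x_0)$ coincide with the unstable leaves through the translated points. The guiding idea is that the unstable leaves through lifts of a single point, translated back, form a family of curves through that point. If these curves ever differ, their union sweeps out an open region, and accessibility then propagates.

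The argument rests on a claim: for any $z\in\RR^2$, if there is $n\in\ZZ^2$ with $\mcf^u_F(z+n)-n\neq\mcf^u_F(z)$, then ${\rm Acc}^u(\pi(z))$ has nonempty interior. To prove it, use the Global Product Structure (Proposition \ref{2 prop foliation on R2}(2)). The curve $\gamma:=\mcf^u_F(z+n)-n$ passes through $z$ and is transverse to $\mcf^c_F$, since $E^u$ is uniformly transverse to $E^c$ via domination. Because $\gamma\neq\mcf^u_F(z)$, there is a point $w\in\gamma$ that does not lie on $\mcf^u_F(z)$. Both curves project into ${\rm Acc}^u(\pi(z))$. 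Now take the $\mcf^u_F$-leaves through points of $\gamma$ near $w$. Their images in $\TT^2$ are unstable manifolds, for orbits coming from $\RR^2_F$, through accessible points. These leaves cross different center leaves, so by the product structure the union of the $\mcf^u_F$-saturation of a small arc of $\gamma$ contains an open set (a product box). Hence ${\rm Acc}^u(\pi(z))$ has interior.

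Next I show that an accessibility class with interior is everything. By Definition \ref{2 def u-accessible class}, $u$-accessible classes partition $\TT^2$. Since $f$ maps unstable manifolds onto unstable manifolds, it maps classes into classes. I will show that the union of the interiors is an open, $f$-invariant set whose boundary is $u$-saturated. If it is nonempty, density arguments then force it to be all of $\TT^2$. For these density arguments I would project leaves of $\mcf^u_F$ and compare them with $\tildeL^u$ via $H$ (Proposition \ref{2 prop semi-conj in R2}), combined with the minimality in Proposition \ref{2 prop minimal foliation} and the quasi-isometry in Proposition \ref{2 prop foliation on R2}(3). A class with interior then contains the saturation of large unstable leaves, and these are dense. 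I would additionally check that such a class is open at every one of its points. Near any point $p$ of the class, the union of lifted leaves through $p$ is locally a family of transverse curves, and the previous construction applied at points of the open part, transported along unstable leaves, gives openness. An open class in a connected space, whose complement is also a union of open classes by the same reasoning, equals $\TT^2$. This is the delicate step, and it is the main obstacle I expect. The paper warns (Remark \ref{3 rmk u is not dense}) that minimality of $\mcf^u_f$ is not available a priori, so I would avoid it and rely only on the connectedness and openness argument.

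Granting this, non-$u$-accessibility forces every class to have empty interior. By the claim, every $z\in\RR^2$, in particular any chosen $x_0$, then satisfies $\mcf^u_F(x_0+n)-n=\mcf^u_F(x_0)$ for all $n$, which is \eqref{eq. x_0}. If making the openness step rigorous proves hard, I would weaken the goal to finding one point $x_0$ outside a set $\Lambda$. Here $\Lambda$ is the set of points whose class has interior; it is open, and it is proper because $y\notin\Lambda$. Applying the claim at a lift of $y$ then gives \eqref{eq. x_0} directly, which is all the lemma requires.
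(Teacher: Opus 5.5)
\textbf{The local claim is fine; the globalization step has a genuine gap.}

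Your local claim is correct, but you reach it by a different route from the paper. The paper first locates $y_0\in\mcf^u_F(x_0)$ with $D\mathcal{T}_{n_0}E^u_F(y_0)\neq E^u_F(y_0+n_0)$ (Claim~\ref{3 claim y_0}) and uses local transversality of the two projected unstable foliations there. You instead saturate the translated leaf $\gamma=\mcf^u_F(z+n)-n$ by $\mcf^u_F$ and use the Global Product Structure. One correction: saturate the arc of $\gamma$ from $z$ to $w$, not a small arc near $w$. Near $w$ the curve $\gamma$ may lie inside the single leaf $\mcf^u_F(w)$. The arc from $z$ to $w$, by contrast, meets two distinct unstable leaves, so its $\mcf^u_F$-holonomy image in $\mcf^c_F(z)$ is a nondegenerate interval.

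The gap is in the globalization. The statement ``a class with interior is all of $\TT^2$'' is not available at this stage, for two reasons. First, the density you would invoke is exactly what Remark~\ref{3 rmk u is not dense} says is missing. Second, your connectedness argument presupposes that the complement of an open class is a union of open classes, i.e.\ that the other classes have interior, which is the very point at issue. In fact, combined with your claim, that statement would immediately show that a non-$u$-accessible $f$ is special, which is what the rest of Subsection~\ref{subsec 3.1} works hard to prove. Your fallback has the same hole: ${\rm Acc}^u(y)\neq\TT^2$ does not imply $y\notin\Lambda$, since a proper class may perfectly well be open.

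\textbf{The fix: the paper's pointwise contradiction argument.} The lemma needs neither statement.

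First sharpen your claim to ``$\pi(z)$ is an interior point of ${\rm Acc}^u(\pi(z))$''. The $\mcf^u_F$-saturation of the open set $O$ you produced is still open, projects into ${\rm Acc}^u(\pi(z))$, and meets $\gamma$, because every point of $O$ lies on a leaf through $\gamma$. Pick $a$ in this intersection and a neighborhood $V$ of $a$ inside the saturation. The $\mcf^u_F$-saturation of $V+n$ is then open and contains $z+n$, because $a+n\in\mcf^u_F(z+n)$. It also projects into ${\rm Acc}^u(\pi(z))$. This is the second half of Claim~\ref{3 claim int point}.

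Now suppose \eqref{eq. x_0} failed at every $x_0\in\RR^2$. Then every point of $\TT^2$ would be an interior point of its own class, so every class would be open. Since the classes partition the connected space $\TT^2$, there would be only one class, contradicting the failure of $u$-accessibility. Equivalently, connectedness produces a point of $\TT^2$ that is not interior to its class. You must take $x_0$ to be a lift of such a point, not of your $y$.
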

 
\begin{proof}[Proof of Lemma \ref{3 lem x_0}]
	Fix  $x_0\in \RR^2$, 
	if there exists $n_0\in\ZZ^2$ such that  
	\begin{align}
		\mcf_F^u(x_0+n_0)-n_0\neq \mcf_F^u(x_0), \label{eq. 3. xn_0}
	\end{align}
	then we claim that $\pi(x_0)\in\TT^2$ is an interior point of its accessible class,
	i.e., $\pi(x_0)\in {\rm Int}\big({\rm Acc}^u(\pi(x_0))\big)$. 
	Thus, by the connectivity of $\TT^2$, every point is an interior point of its accessible class 
	and hence $f$ is $u$-accessible, 
	which contradicts the assumption.

	First, let $x_0\in\RR^2$ and $n_0\in\ZZ^2$ satisfy \eqref{eq. 3. xn_0}. 
 
	\begin{claim}\label{3 claim y_0}
		There exists $y_0\in \mcf^u_F(x_0)$ such that $D\mathcal{T}_{n_0}E^u_F(y_0)\neq E^u_F(y_0+n_0)$, where $\mathcal{T}_{n_0}(x)=x+n_0$.
	\end{claim}
	\begin{proof}[Proof of Claim \ref{3 claim y_0}]
		Otherwise, if each point $y\in  \mcf^u_F(x_0)$ has $D\mathcal{T}_{n_0}E^u_F(y)= E^u_F(y+n_0)$, 
		then 
		\[ T_{y+n_0}\big(\mcf^u_F(x_0)+n_0 \big)=D\mathcal{T}_{n_0}T_y\big(\mcf^u_F(x_0) \big) =D\mathcal{T}_{n_0}E^u_F(y)= E^u_F(y+n_0), \quad \forall\, y\in \mcf^u_F(x_0). \]
		By the unique integrability of the unstable bundle $E^u_F$ (see \cite{Pesinbook04} for example), 
		we get $\mcf^u(x_0)+n_0=\mcf^u_F(x_0+n_0)$ which contradicts \eqref{eq. 3. xn_0}.
	\end{proof}

	Then, let $y_0$ be given by Claim \ref{3 claim y_0}. 
	\begin{claim}\label{3 claim int point}
		The point $\pi(y_0)$ is an interior point of Acc$^u(\pi(y_0))$. 
		So is the point $\pi(x_0)$.
	\end{claim}

	\begin{proof}[Proof of Claim \ref{3 claim int point}]
		For short, let $x=\pi(x_0)$ and $y=\pi(y_0)$.
		Let $n_0$ be given as above. 
		Denote by $\bar{y}^0, \bar{y}^{n_0}\in \TT^2_f$ 
		the $f$-orbits of the projections of the $F$-orbits of $y_0$ and $y_0+n_0$ respectively. 
		Then, Claim \ref{3 claim y_0} implies that $$E^u_f(y,\bar{y}^0)+ E^u_f(y,\bar{y}^{n_0})=T_y\TT^2.$$ 
		By continuity,    
		there exists a neighborhood $U_0\subset \RR^2$ of $y_0$  
		such that the projections of the local unstable foliations in $U_0$ and $U_0+n_0$ are transverse 
		with each other in $\pi(U_0)$. 
		Hence $y\in\pi(U_0) \subset {\rm Int}\big( {\rm Acc}^u(y)\big)$. 
		
		Moreover, since $y_0\in\mcf_F^u(x_0)$, $x\in \mcf^u_f(y,\bar{y}^0)$. 
		Then, there exists a neighborhood $U_x\subset \TT^2$ of $x$ such that 
		\[U_x\subset \bigcup_{z\in U_0}\pi \big(  \mcf^u_F(z)\big). \]
		 In particular, $x\in U_x\subset {\rm Acc}^u\big(\pi(U_0)\big)={\rm Acc}^u(y)={\rm Acc}^u(x)$, 
		 and $x$ is an interior point of ${\rm Acc}^u(x)$.	
	\end{proof}
	
	This completes the proof of Lemma \ref{3 lem x_0}. 
\end{proof}

\begin{rmk}\label{3 rmk u is not dense}
	Let $x_0$ be given in Lemma \ref{3 lem x_0}. 
	Then every point $x\in \pi\big(\mcf^u_F(x_0)\big)$ is a special point.
	Note that $\pi\big(\mcf^u_F(x_0)\big)$ may be not dense in $\TT^2$, 
	though it is dense when $f$ is Anosov \cite{GS22}. 
	If it is dense, then one can immediately get that $f$ is special, 
	by the continuity of unstable bundle with respect to the orbits.  
	This is the main reason why the proof of the same dichotomy in \cite{GS22} fails here.
\end{rmk}

We continue to prove that $H$ is commutative with the deck transformations.  
Without loss of generality, 
we assume that $F$ preserves the orientation of each center leaf. 

Let $\Gamma=\big\{x\in\RR^2\ |\ H^{-1}\circ H(x)=\{x\} \big\}$ be the $H$-injective set. 
Note that $\Gamma$ is an $F$-invariant set, so is its closure $\bar{\Gamma}$. 
Moreover, ${\rm Fix}(F)\cap \bar{\Gamma}\neq \emptyset$. 
Indeed, let $0\in\RR^2$ be the unique fixed point of $A$. 
By Proposition \ref{2 prop foliation on R2}, $H^{-1}(0)$ is 
either a single point and hence a fixed point of $F$, 
or a compact local center leaf 
whose endpoints are fixed points of the orientation-preserving diffeomorphism $F$ restricted to this curve.

\begin{lem}\label{3 lemma xn}
	For any $x\in \bar{\Gamma}$ and $n\in\ZZ^2$,  
		\[\mcf_F^u(x+n)=\mcf_F^u(x)+n,     \quad \forall\, n\in\ZZ^2.\]
		In particular, there exists a fixed point $z_0$ of $F$ such that 
		\[\mcf_F^u(z_0+n)=\mcf_F^u(z_0)+n, \quad \forall\, n\in\ZZ^2.\]
 \end{lem}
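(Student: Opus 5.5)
The plan is to compare, for $x\in\bar\Gamma$ and $n\in\ZZ^2$, the translated leaf $\mcf^u_F(x+n)-n$ with $\mcf^u_F(x)$ along the center leaf through $x$, and to measure the discrepancy on the $A$-side via $H$. Here $H$ is the semi-conjugacy between $F$ and $A$ from Proposition \ref{2 prop semi-conj in R2}. By the proof of Proposition \ref{2 prop foliation on R2}, all properties except the $\ZZ^2$-periodicity of $\bar\Gamma$ hold for $H$ even though $H$ need not commute with deck transformations. In particular:
\begin{itemize}
\item $H$ maps $\mcf^u_F$-leaves onto $\tildeL^u$-leaves;
\item $H$ maps $\mcf^c_F$-leaves onto $\tildeL^s$-leaves and is monotone on them;
\item $H$ is not locally constant along centers at points of $\bar\Gamma$.
\end{itemize}
By the Global Product Structure, define
\[p_n(x):=\mcf^c_F(x)\cap\big(\mcf^u_F(x+n)-n\big).\]
The goal is $p_n(x)=x$. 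Indeed, both $\mcf^u_F(x+n)-n$ and $\mcf^u_F(x)$ are then leaves of the translated unstable foliation through $x$, so they coincide by unique integrability (as in Claim \ref{3 claim y_0}).

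\textbf{Step 1 (equivariance).} Using $F(y+m)=F(y)+Am$ for $m\in\ZZ^2$, I would check that $F(p_n(x))=p_{An}(F(x))$. Let $d_n(x)$ be the signed $\tildeL^s$-displacement from $H(x)$ to $H(p_n(x))$; both points lie on $\tildeL^s(H(x))$. The relation $H\circ F=A\circ H$ then gives
\[d_{An}(F x)=\lambda\, d_n(x),\]
where $\lambda$ is the stable eigenvalue of $A$.

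\textbf{Step 2 (uniform bound).} I would show $\sup_{x,m}|d_m(x)|<\infty$. The image $H(\mcf^u_F(x+m))=\tildeL^u(H(x+m))$. By Proposition \ref{2 prop semiconj on stable}(1) and $\|H-{\rm Id}\|<C$, the point $H(x+m)-m$ lies on $\tildeL^s(H(x))$ at uniformly bounded distance. The quasi-isometry of $\mcf^c_F,\mcf^u_F$ (Proposition \ref{2 prop foliation on R2}) then bounds $p_m(x)-x$, and hence $d_m(x)$. Iterating Step 1 backwards, with $x\in\bar\Gamma$ replaced by $F^{-k}x\in\bar\Gamma$ (using $F$-invariance), gives for $n=A^km$:
\[|d_n(x)|=|\lambda|^k|d_m(F^{-k}x)|\le B|\lambda|^k .\]
Alternatively, this follows directly from Proposition \ref{2 prop semiconj on stable}(2).

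\textbf{Step 3 (from $d_n=0$ to $p_n=x$).} If $d_n(x)=0$ then $H(p_n(x))=H(x)$. When $x\in\Gamma$ this forces $p_n(x)=x$. For $x\in\bar\Gamma$ I would use monotonicity: $H^{-1}H(x)$ is a compact center segment, and $p_n(x)$ depends continuously on $x$. Taking limits from $\Gamma$, combined with non-local-constancy at $\bar\Gamma$, should give $p_n(x)=x$.

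\textbf{Main obstacle.} Steps 1--2 only make $d_n$ small, not zero, and only along $n\in A^k\ZZ^2$. To get exact vanishing for every $n\in\ZZ^2$, I would first try to use the exact relation $d_n(x)=\lambda^{-k}d_{A^kn}(F^kx)$, which needs a bound on $d_{A^kn}$ decaying faster than $|\lambda|^{k}$. It is more promising to exploit the non-accessibility hypothesis here. By Lemma \ref{3 lem x_0} and the argument of Claims \ref{3 claim y_0}--\ref{3 claim int point}, any point $y$ and vector $n$ with $\mcf^u_F(y+n)-n\neq\mcf^u_F(y)$ produce an open $u$-accessible set. I would try to show that if $p_n(x)\neq x$ for some $x\in\bar\Gamma$, then the strip of unstable leaves between $\mcf^u_F(x)$ and $\mcf^u_F(x+n)-n$ is $u$-accessible. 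Because $H$ is not locally constant on centers at $x$, this strip has non-degenerate $H$-image. Pushing it forward by $F$ and projecting, as in Claim \ref{3 claim int point}, would make $\pi(x)$ an interior point of its accessibility class, contradicting non-accessibility.

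\textbf{The ``in particular''.} Finally, ${\rm Fix}(F)\cap\bar\Gamma\neq\emptyset$ was shown just before the statement, which yields the fixed point $z_0$.
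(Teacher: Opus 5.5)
Your Steps 1--3 are correct as far as they go, but they cannot finish the proof, and the way you propose to use non-accessibility does not work.

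\textbf{Steps 1--2 cannot give $d_n=0$.} The estimate $|d_n(x)|\le B|\lambda|^k$ for $n\in A^k\ZZ^2$ is just Proposition~\ref{2 prop semiconj on stable}(2) again. No argument using only $H$ and the dynamics can force $d_n=0$. Indeed, for a non-special Anosov map $f$ one has $\Gamma=\RR^2$, yet $\mcf^u_F(x+n)\neq\mcf^u_F(x)+n$ for some $x,n$.

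\textbf{Your use of non-accessibility fails.} Showing that $\pi(x)$ is an interior point of ${\rm Acc}^u(\pi(x))$ does \emph{not} contradict non-$u$-accessibility. Open accessibility classes may coexist with non-open ones. The argument of Lemma~\ref{3 lem x_0} only shows that not \emph{every} point can be interior to its class, since otherwise connectedness of $\TT^2$ gives a single class. It therefore produces a single point $x_0$ with $\mcf^u_F(x_0+n)=\mcf^u_F(x_0)+n$ for all $n$. It says nothing at a prescribed $x\in\bar\Gamma$.

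\textbf{The missing idea} is to transport the specialness of that one leaf to $\bar\Gamma$ by density and continuity.
\begin{enumerate}
\item The set $H\big(\bigcup_{n}\mcf^u_F(x_0+n)\big)=\bigcup_n\tildeL^u(H(x_0+n))$ is dense in $\RR^2$, by Propositions~\ref{2 prop minimal foliation} and~\ref{2 prop semiconj on stable}(2).
\item Take $x\in\bar\Gamma$ and a short center segment $J^c\ni x$. Since $H|_{\mcf^c_F}$ is not locally constant at $x$, there is an $H$-saturated interval $I^c\subset J^c$ with $H(I^c)$ non-degenerate.
\item Some leaf $\tildeL^u(H(x_0+n_0))$ crosses $H(I^c)$, so $\mcf^u_F(x_0+n_0)$ meets $J^c$.
\item Hence there are $y_k\in\mcf^u_F(x_0+n_k)$ with $y_k\to x$. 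Each satisfies $\mcf^u_F(y_k+n)=\mcf^u_F(x_0+n_k+n)=\mcf^u_F(y_k)+n$.
\item Letting $k\to\infty$ and using continuity of the unstable foliation gives $\mcf^u_F(x+n)=\mcf^u_F(x)+n$.
\end{enumerate}
This bypasses $p_n$, $d_n$ and Step~3 altogether.

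If you prefer to keep your contradiction framing, consider the set of points not interior to their accessibility class. It is closed, nonempty and $u$-saturated, and all lifts of its points satisfy the identity. You would still need exactly this density argument to see that it accumulates on $\bar\Gamma$.

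Your treatment of the ``in particular'' part via ${\rm Fix}(F)\cap\bar\Gamma\neq\emptyset$ is correct.
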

\begin{proof}[Proof of Lemma \ref{3 lemma xn}]
	   Let $x_0$ be given by Lemma \ref{3 lem x_0} and consider the set
	   \begin{align}
		\mathbf{F}_{x_0}:=\bigcup_{n\in\ZZ^2} \mcf^u_F(x_0+n).\label{eq. mathbf F}
	   \end{align}
	\begin{claim}\label{3 claim closure}
		$\bar{\Gamma} \subset \bar{\mathbf{F}}_{x_0}$, the closure of $\mathbf{F}_{x_0}$.
	\end{claim}
	
	\begin{proof}[Proof of Claim \ref{3 claim closure}]
		Let $x\in \bar{\Gamma}$. 
		Since $H|_{\mcf_F^c(x)}$ is not locally constant at $x$, 
		consider a short open segment $J^c\subset \mcf^c_F(x)$ containing $x$, 
		we may choose a non-trivial closed interval $I^c\subset J^c$ with $H^{-1}\circ H(I^c)=I^c$
		such that $H(I^c)$ has positive length.
		By Proposition \ref{2 prop minimal foliation} and the second item of Proposition \ref{2 prop semiconj on stable}, 
		the set
		\begin{align}
	    	H(\mathbf{F}_{x_0})=
			H(\bigcup_{n\in\ZZ^2}\mcf_F^u(x_0+n))=
			\bigcup_{n\in\ZZ^2}\tildeL^u\big(H(x_0+n)\big) \label{3 eq. closure}
		\end{align}
		is dense in $\RR^2$. 
		Hence there is $n_0\in\ZZ^2$ such that the leaf $\tildeL^u\big(H(x_0+n_0)\big)={H}\big(\mcf_F^u(x_0+n_0)\big)$
		intersects $H(I^c)$ at a point $z$. 
		Thus, $H^{-1}(z)\subset I^c\subset  J^c$, 
		and $\mcf_F^u(x_0+n_0)$ intersects $J^c$. 
		This implies $\bar{\Gamma} \subset \bar{\mathbf{F}}_{x_0}$.
	\end{proof}
  
	Let $x\in \bar{\Gamma}$. By Claim \ref{3 claim closure}, 
	there exist $n_k\in\ZZ^2$ and $y_k\in \mcf^u_F(x_0+n_k)$ such that $y_k\to x$ as $k\to +\infty$. 
	On the one hand, for any $n\in\ZZ^2$, 
	\[\mcf_F^u(y_k+n)\to \mcf_F^u(x+n), \quad {\rm as}\ k\to +\infty. \]
	On the other hand, by Lemma \ref{3 lem x_0},
	\[\mcf_F^u(y_k+n)=\mcf_F^u(x_0+n_k+n)=\mcf_F^u(x_0+n_k)+n=\mcf_F^u(y_k)+n\to \mcf_F^u(x)+n, \quad {\rm as}\ k\to +\infty. \]
	We get that $\mcf_F^u(x+n)=\mcf_F^u(x)+n$.  
	In particular, let $z_0\in{\rm Fix}(F)\cap \bar{\Gamma}\neq \emptyset$, 
	then $\mcf_F^u(z_0+n)=\mcf_F^u(z_0)+n$ for all $n\in\ZZ^2$. 
  
	This ends the proof of Lemma \ref{3 lemma xn}.
\end{proof}

Then the existence of the semi-conjugacy between $f$ and $A$ follows from the following proposition.

\begin{prop}\label{2 prop special implies semi-conjugacy}
	Let $z_0$ be a fixed point of $F$ such that $\mcf_F^u(z_0+n)=\mcf_F^u(z_0)+n$ for all $n\in\ZZ^2$. 
	Then the semi-conjugacy $H$ is commutative with the deck transformations.
\end{prop}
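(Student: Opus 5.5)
The plan is to measure how far $H$ fails to commute with the deck transformations by the \emph{defect} $D_n(x):=H(x+n)-n-H(x)$, for $x\in\RR^2$ and $n\in\ZZ^2$. By Proposition \ref{2 prop semiconj on stable}(1), $D_n(x)$ lies in the stable direction $\widetilde L^s$. Since $\|H-{\rm Id}\|_{C^0}<C$, we have $|D_n(x)|<2C$ uniformly in $x$ and $n$. The goal is to prove $D_n\equiv 0$.

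\textbf{Step 1 (defect is constant along $\mcf^u_F(z_0+p)$).} I will use that $H$ maps unstable leaves of $F$ onto unstable leaves $\tildeL^u$ of $A$. This is the part of Proposition \ref{2 prop foliation on R2} that, as remarked in its proof, holds when $g=A$. Take $y\in\mcf^u_F(z_0+p)$. By the hypothesis, $y+n\in\mcf^u_F(z_0+p)+n=\mcf^u_F(z_0+p+n)$. Hence $H(y+n)-n$ lies on the translated line $\tildeL^u\big(H(z_0+p+n)\big)-n$, which is parallel to $\tildeL^u\big(H(z_0+p)\big)\ni H(y)$. The difference $D_n(y)$ is therefore the stable component of the vector between these two parallel $\widetilde L^u$-lines. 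This does not depend on $y$, so $D_n(y)=D_n(z_0+p)$ for every $y$ on the leaf.

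\textbf{Step 2 (additivity and equivariance).} Put $c_n:=D_n(z_0)$. Since $z_0$ is fixed by $F$, $H(z_0)=0$, and $H(z_0+n)=n+c_n$. Applying Step 1 with base point $z_0+p$ gives the cocycle identity $D_n(z_0+p)=c_{p+n}-c_p$. The relations $F(x+n)=F(x)+An$ and $H\circ F=A\circ H$ give $c_{An}=Ac_n$. I then want to show $c:\ZZ^2\to\widetilde L^s$ is a homomorphism, i.e. $D_n(z_0+p)=c_n$. I would get this by an approximation argument in the spirit of Claim \ref{3 claim closure} and Lemma \ref{3 lemma xn}:
\begin{itemize}
\item $\bigcup_p \mcf^u_F(z_0+p)$ has $H$-image dense in $\RR^2$, by Proposition \ref{2 prop minimal foliation};
\item Proposition \ref{2 prop semiconj on stable}(2) makes $D_n$ nearly invariant under translations by $A^k\ZZ^2$;
\item so the value $c_{p+n}-c_p$ can be compared with $c_n$ through points $p$ that are $A$-adically close to $0$.
\end{itemize}
Once $c$ is a homomorphism, fix $m\in\ZZ^2$ and let $d=|\det A|$. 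Then $d^km\in A^k\ZZ^2$, and Proposition \ref{2 prop semiconj on stable}(2) gives $|d^k c_m|=|c_{d^km}|<2C\|A|_{L^s}\|^k$. Letting $k\to\infty$ forces $c_m=0$, and Step 1 then gives $D_n\equiv0$ on $\bigcup_p\mcf^u_F(z_0+p)$.

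\textbf{Step 3 (all of $\RR^2$).} It remains to pass from $\bigcup_p\mcf^u_F(z_0+p)$ to every $x\in\RR^2$. $D_n$ is continuous, so it vanishes on the closure of this union. By the argument of Claim \ref{3 claim closure}, that closure contains $\bar\Gamma$. For a general $x$, either $x\in\Gamma$ or $x$ lies in a compact center segment $H^{-1}(H(x))$ whose endpoints lie in $\bar\Gamma$. In the second case, $x+n$ lies in the translated center segment, and monotonicity of $H$ on center leaves gives $H(x+n)=H(\text{endpoint}+n)=H(\text{endpoint})+n=H(x)+n$.

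\textbf{Main obstacle.} I expect the hardest step to be the additivity $D_n(z_0+p)=c_n$ in Step 2. The hypothesis only controls unstable leaves through the single orbit $z_0+\ZZ^2$, and $D_n(x)$ is not obviously a function of $H(x)$ alone. If the approximation argument fails, my first fallback is to work directly with the $\ZZ^2$-family of lines $\tildeL^u(n+c_n)$. These are the $H$-images of $\mcf^u_F(z_0+n)$, and they are permuted by $A$. From the cocycle relation they should form a lattice-invariant family, and boundedness would then rule out a nonzero stable shift.
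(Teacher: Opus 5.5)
\textbf{There is a genuine gap in Step 2.} Your Steps 1 and 3 are correct. Step 1 is a clean observation: on a special leaf $\mcf^u_F(z_0+p)$, $D_n$ is the stable offset between two parallel $\tildeL^u$-lines, so it is constant along the leaf. The problem is that Step 2 carries the whole content of the proposition, because a bounded additive $c:\ZZ^2\to\widetilde{L}^s$ vanishes trivially ($|c_{jm}|=j|c_m|<2C$). Your sketch does not establish that additivity. Near-periodicity gives $|D_n(x+m)-D_n(x)|<4C\|A|_{L^s}\|^k$ for $m\in A^k\ZZ^2$, but it only compares the values $c_{p+n}-c_p$ inside one coset $p+A^k\ZZ^2$. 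To pass from the coset $A^k\ZZ^2$ (where the value is close to $c_n$) to an arbitrary $p$, you need a point of the \emph{domain} at which leaves from both cosets accumulate. Density of $H\big(\bigcup_p\mcf^u_F(z_0+p)\big)$ in the target cannot supply such a point. As you yourself note, $D_n$ is not a function of $H(x)$, so leaves with nearby images may be separated by a collapsed center segment. Your fallback is circular: since $c_{p+n}-c_p\in\widetilde{L}^s$, the family $\{\tildeL^u(n+c_n)\}$ is $\ZZ^2$-invariant exactly when $c_{p+n}=c_p$ for all $p,n$, which is the conclusion itself.

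\textbf{The missing idea} is the domain-side accumulation of Claims \ref{3 claim closure} and \ref{5 claim gamma inclusion}, applied coset by coset: for every $y\in\RR^2$ and $k\in\NN$,
\[
\bar\Gamma\subset\overline{\bigcup_{m\in\ZZ^2}\mcf^u_F(y+A^km)}.
\]
Fix $x\in\bar\Gamma$, $p,n\in\ZZ^2$ and $k\in\NN$. Choose $y_j\in\mcf^u_F(z_0+p+m_j)$ with $m_j\in A^k\ZZ^2$ and $y_j\to x$. Step 1 and near-periodicity give
\[
|D_n(y_j)-D_n(z_0+p)|=|D_n(z_0+p+m_j)-D_n(z_0+p)|<4C\|A|_{L^s}\|^k .
\]
Continuity of $D_n$ then yields $|D_n(x)-D_n(z_0+p)|\le 4C\|A|_{L^s}\|^k$ for every $k$. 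Hence $D_n(z_0+p)=D_n(x)$ is independent of $p$, which is exactly your additivity. So $c\equiv 0$, $D_n\equiv 0$ on $\bar\Gamma$, and your Step 3 finishes the proof.

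With this step filled in, your route is genuinely shorter than the paper's Appendix \ref{sec-app-a}. The paper instead builds $\overline{H}(x)=H(x-n)+n$ on $\mathbf{F}_0$, proves its uniform continuity along center leaves by an $l$-fold translation estimate, and proves $F$-invariance of $\bar{\mathbf{F}}_0$. It then uses expansivity of $A$ to get $\overline{H}=H$ on $\bar\Gamma$. Your completed argument also never uses that $z_0$ is fixed, so it covers the general Proposition \ref{5 prop special to semi-conj}.
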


Proposition \ref{2 prop special implies semi-conjugacy} is similar to the result of \cite{GX2023}  
in which the authors assume that $f$ is already special and then show that $f$ is semi-conjugate to $A$.   
Here, we just need one special unstable manifold to make sure the existence of the semi-conjugacy.  
The proof of Proposition \ref{2 prop special implies semi-conjugacy} is similar to the argument in \cite{GX2023}, 
which we leave to Appendix \ref{sec-app-a}.

By Proposition \ref{2 prop special implies semi-conjugacy},  
we complete the proof of the dichotomy part of Theorem \ref{3  thm dichotomy}.

\subsection{Proof of the estimation part of Theorem \ref{3  thm dichotomy}}\label{subsec 3.2}

In the following, we assume that $f$ is $u$-accessible 
and give the estimations of the size and the number of unstable manifolds connecting two points.  

Fix $n\in\NN$ and $r>0$. 
For any $x\in\TT^2$, we denote by Acc$^u(x,n,r)$ the set consisting of points 
which can reach $x$ within $n$ unstable manifolds (with respect to orbits) 
with size (on the leaf of $\mcf^u_F$) no more than $r$. 

\begin{claim}\label{3 claim N0R0}
	There exist $y\in\TT^2$, $N_0\in\NN$, and $R_0>0$ such that  $\TT^2= {\rm Acc}^u(y,N_0,R_0)$.
\end{claim}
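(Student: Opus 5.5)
We argue by a Baire category and compactness argument, using only $u$-accessibility together with the openness of accessible classes established in the proof of Lemma~\ref{3 lem x_0}. Fix any $y\in\TT^2$. Since $f$ is $u$-accessible, ${\rm Acc}^u(y)=\TT^2$. By definition,
\[
\TT^2=\bigcup_{n\in\NN}\bigcup_{r\in\NN}{\rm Acc}^u(y,n,r).
\]
To apply Baire, we first check that each ${\rm Acc}^u(y,n,r)$ is closed. Take a sequence of points reaching $y$ through chains of at most $n$ unstable plaques of length at most $r$. Each chain can be described by lifts $x'_i\in\RR^2$ and points of $\mcf^u_F(x'_i,r)$, and the lifts may be normalized to lie in a fundamental domain up to a common deck translation. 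Since $F$ is a diffeomorphism, the local unstable plaques $\mcf^u_F(\cdot,r)$ vary continuously with the base point on compact sets of $\RR^2$. The chains therefore stay in a compact region, and we may pass to a convergent subsequence to obtain a limiting chain of the same type. The one subtlety is that the projected plaques $\pi(\mcf^u_F(x',r))$ depend on the lift $x'$, not only on $\pi(x')$. Accordingly, we track the chain in $\RR^2$ starting from a lift of $y$. We then use that only finitely many deck translates are relevant within bounded distance, which follows from the quasi-isometry in Proposition~\ref{2 prop foliation on R2}. With this in hand, Baire's theorem yields $n_1,r_1$ such that ${\rm Acc}^u(y,n_1,r_1)$ has nonempty interior $U$.

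Next we spread this open set over the whole torus using the openness argument from Claim~\ref{3 claim int point}. That claim, combined with $u$-accessibility (which forces every point to be interior to its class), shows the following. Every point $z\in\TT^2$ has an open neighborhood $V_z$ and a point $u_z\in U$ such that $V_z\subset{\rm Acc}^u(u_z,m_z,s_z)$ for some $m_z,s_z$. This is obtained by connecting $z$ to $u_z$ through a finite chain, and then thickening the chain as in Claim~\ref{3 claim int point}: at some point along the chain, transverse unstable plaques coming from two distinct lifts sweep out an open set, and continuity of local plaques propagates openness along the remaining plaques. By compactness, finitely many $V_{z_1},\dots,V_{z_\ell}$ cover $\TT^2$. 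Concatenating chains, every point of $V_{z_j}$ reaches $y$ within $m_{z_j}+n_1$ plaques of size at most $\max(s_{z_j},r_1)$. Setting
\[
N_0=\max_j m_{z_j}+n_1,\qquad R_0=\max\Bigl(\max_j s_{z_j},\,r_1\Bigr),
\]
we obtain $\TT^2={\rm Acc}^u(y,N_0,R_0)$.

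We expect the main obstacle to be the thickening step. Its difficulty is that the relevant transversality, between plaques through $y_0$ and $y_0+n_0$, occurs at one particular point of the chain. The bounded-size plaques before that point must be shown to vary continuously, so that they carry an open set of starting points onto an open set. Our first approach is to reverse the chain: start from the transverse pair, which produces an open set of endpoints, and then pull this open set back along the remaining finitely many plaques of bounded length. This pullback uses the continuous dependence of $\mcf^u_F(x',R)$ on $x'\in\RR^2$ and the fact that sliding along local unstable plaques defines continuous maps. Alternatively, the closedness from the first step, applied together with the covering, might give the result directly without the thickening.
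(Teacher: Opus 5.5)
Your overall architecture matches the paper's. You produce an open set $U$ all of whose points reach $y$ with bounded data, show that points near any $z$ reach $U$ with bounded data, and conclude by compactness. However, the step you single out as the main obstacle is left unproved, and the mechanism you propose for it is the wrong one. An arbitrary chain from $z$ to $u_z$ need not pass through a point where plaques of two different lifts are transverse, and no transversality along the chain is needed anyway.

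The missing, elementary observation is to aim at the open set $U$ rather than at a prescribed point $u_z$. The set $\bigcup_{x\in U}{\rm Acc}^u(x,n,R_0)$ is open. Suppose $z$ reaches $x\in U$ through $k\le n$ plaques taken with respect to orbits $\bar z^i\in\TT^2_f$. For $z'$ near $z$, choose nearby orbits using local inverse branches of $f$. Plaques of bounded size depend continuously on the orbit, so $z'$ follows a $C^1$-close chain of plaques of the same lengths, ending at a point near $x$, hence in $U$.

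These open sets increase with $n$ and cover $\TT^2$ by $u$-accessibility. Compactness then gives a single $N_1$, and $U\subset{\rm Acc}^u(y,2,R_0)$ gives $N_0=N_1+2$. The plaque size can be fixed at $R_0$ from the start, since longer plaques subdivide. Your ``pull back the open set'' variant is essentially this argument. As written, though, the proposal does not commit to it, and it phrases it through lifts in $\RR^2$, whereas accessibility chains use arbitrary orbits in $\TT^2_f$. The alternative that closedness ``might give the result'' is not an argument.

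The Baire step is unnecessary, since the proof of Claim~\ref{3 claim int point} already gives $U=\pi(U_0)\subset{\rm Acc}^u(y,2,R_0)$. Your closedness argument for it is also incorrect as written. Each step of a chain may use an arbitrary lift of the current point. Since $\mcf^u_F(w+n)\neq\mcf^u_F(w)+n$ in the non-special case, you can neither normalize the lifts to a fundamental domain nor claim that only finitely many deck translates matter. The chain in $\RR^2$ need not stay in a compact region.

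In fact, for chains restricted to projections of $\mcf^u_F$-plaques, closedness can genuinely fail. Plaques $\pi(\mcf^u_F(w+m_k,r))$ with $m_k\to\infty$ accumulate on plaques of orbits in $\TT^2_f$ that need not come from $\RR^2$ (cf.\ Remark~\ref{2 rmk universal to limit}). With the paper's definition, which uses orbits in $\TT^2_f$, closedness does hold. The correct reason is compactness of $\TT^2_f$ together with continuity of bounded plaques in the orbit.

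Once that is fixed, Baire is a legitimate alternative way to get an open $U$. Note, however, that the paper's specific $U$ is reused immediately after this claim: its points reach $y$ through two plaques projected from $\RR^2$, and this is what lets the argument pass from orbits in $\TT^2_f$ to chains on the universal cover. A Baire-produced $U$ does not provide that.
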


\begin{proof}[Proof of Claim \ref{3 claim N0R0}]
	The idea of this proof originates from that for the diffeomorphism case \cite{D2003}.
	Let $y=\pi(y_0)$ and $U=\pi(U_0)$ be given by Claim \ref{3 claim y_0} and Claim \ref{3 claim int point}. 
	It follows from the proof of Claim \ref{3 claim int point} that there exists $R_0>0$ such that 
	\begin{align}
		U\subset {\rm Acc}^u(y,2,R_0). \label{eq. 3. accset 0}
	\end{align}
	Indeed, the number $2$ is given by 
	the two transverse local unstable foliations (projected from $\RR^2$) in $U$, 
	and the size $R_0>0$ is given by the size of $U$. 
	Then, by the assumption that $f$ is $u$-accessible,  
	\begin{align}
		\TT^2=\bigcup_{n\geq 1}  {\rm Acc}^u(y,n,R_0)=\bigcup_{n\geq 1} \bigcup_{x\in U}  {\rm Acc}^u(x,n,R_0). \label{eq. 3. accset 1}
	\end{align}
	Note that we can fix $R_0$ to get the accessibility, 
	since a unstable manifold with large size can be decomposed into finitely many unstable manifolds within size $R_0$.
	
	It is clear that $\bigcup_{x\in U}{\rm Acc}^u(x,n,R_0)$ is an open set. 
	Indeed, if $z\in \bigcup_{x\in U}{\rm Acc}^u(x,n,R_0)$, 
	then there exist $x\in U$ and $k\ (1\leq k\leq n)$ unstable manifolds within size $R_0$ 
	which we denote by $\mcf^{i}\ (1\leq i\leq k)$,
	such that $\mcf^{i}\cap \mcf^{i+1}\neq \emptyset$ with $x\in \mcf^1$ and $z\in \mcf^k$. 
	By the continuity of unstable manifolds with respect to the orbits, 
	there exists a neighborhood $U_x\subset U$ of $x$ such that 
	each point in $U_x$ can be sent to a neighborhood of $U_z$ of $z$ 
	along unstable manifolds which are $C^1$-close to $\mcf^i\ (1\leq i\leq k)$. 
	It follows that $U_z\subset \bigcup_{x\in U}{\rm Acc}^u(x,n,R_0)$.
	
	By the compactness of $\TT^2$ and  \eqref{eq. 3. accset 0}, 
	the formula \eqref{eq. 3. accset 1} implies that there is $N_1\in\NN$ such that 
	\begin{align}
		  \TT^2= \bigcup_{x\in U}{\rm Acc}^u(x,N_1,R_0) \subset {\rm Acc}^u(y,N_1+2,R_0). \label{eq. 3. accset 2}
	\end{align}
	Hence, $N_0=N_1+2$ and $R_0$ are the constants satisfying Claim \ref{3 claim N0R0}.
\end{proof}

Claim \ref{3 claim N0R0} indeed gives a quantitative estimation for the accessibility 
with respect to orbits 
in the inverse limit space $\TT^2_f$.  
Finally, it remains to show that the estimation also holds on the universal cover $\RR^2$. 

Let $y$, $U$, $N_0$,  $N_1$ and $R_0$ be given by Claim \ref{3 claim N0R0}.

 \begin{figure}[htbp]
	\centering
	\includegraphics[width=15.6cm]{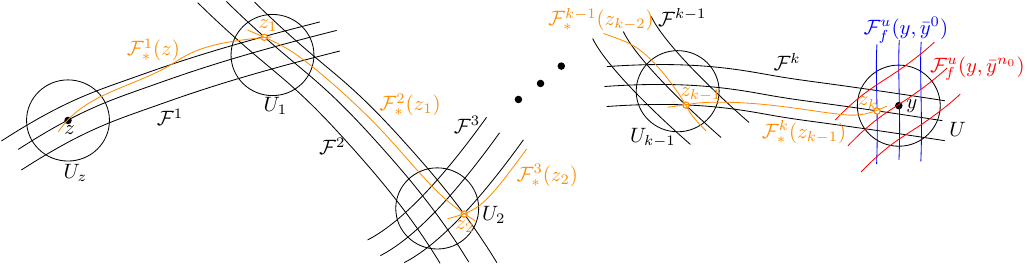}
	\caption{Change the unstable manifolds into projections}	
	\label{figure-uacc}
\end{figure}

Note that the proof of Claim \ref{3 claim N0R0} with \eqref{eq. 3. accset 2} implies that 
for any $z\in\TT^2$ there exists a small neighborhood  $U_z$ of $z$ 
and $k$-families $(1\leq k\leq N_1)$ of local unstable foliations  $\mcf^i\ (1\leq i\leq k)$ 
such that $\mcf^i\cap \mcf^{i+1}\neq \emptyset$ and they send $U_z$ into $U$.  
Let $z\in \mcf^1(z)$ and denote by $U_1$ the image of $U_z$ under the holonomy of $\mcf^1$.  
Similarly, we denote by $U_i \ (2\leq i \leq k)$ the image of $U_{i-1}$ under the holonomy of $\mcf^{i}$.  
By the denseness (in $\TT^2_f$) of the projection of $F$-orbit on $\RR^2$ (see Remark \ref{2 rmk universal to limit}) 
and the continuity of unstable manifolds, 
there exists an unstable manifold $\mcf^1_*(z)$ satisfying
\begin{enumerate}
	\item[-] $\mcf^1_*(z)$ is a projection of  an unstable manifold in $\RR^2$;
	\item[-] $\mcf^1_*(z)$ is $C^1$-close to $\mcf^1(z)$  and  within size $2R_0$;
	\item[-] There exists a point $z_1\in U_1\cap \mcf^1_*(z)$.
\end{enumerate}
See Figure \ref{figure-uacc}.
By induction, we apply the above method to points $z_i \ (1\leq i\leq k-1)$ and manifolds $\mcf^i(z_i)$ 
to get the manifold $\mcf^{i+1}_*(z_i)$ and the point $z_{i+1}\in U_{i+1}\cap \mcf^{i+1}_*(z_i)$. 
In particular, each unstable manifold $\mcf^{i}_*$ is projected from $\RR^2$ and its size is no more than $2R_0$. 
By the choice of  $y$ and  $U$ (Claim \ref{3 claim int point}),  
the point $z_k\in U$ can reach to $y$ by two unstable manifolds projected from $\RR^2$ within size $R_0$.  

Hence, by taking $w_0=y=\pi(y_0)$,  $w^*=y_0$, $N=N_0=N_1+2$, and $R=2R_0$, 
we complete the proof of the estimation part of Theorem \ref{3  thm dichotomy}.

\section{Rigidity for $u$-accessible DA maps}\label{sec rigidty}

In this section, we prove Theorem \ref{2 thm rigidity accessible}.
Assume that $f$ is a DA map, $g$ is an Anosov map,
$f$ and $g$ are both $u$-accessible and semi-conjugate via $h:\mtt$. 
Recall that $F,G$ and $H$ are lifts of $f,g$ and $h$ on $\RR^2$. 

For clarity, we divide the proof into two parts: 
In Subsection \ref{subsec C1 case}, 
we present the proof of the $C^1$ part of Theorem \ref{2 thm rigidity accessible}, 
i.e., $f$ is Anosov and $h$ is a conjugacy; 
In Subsection \ref{subsec C1+ case}, 
we complete the proof of the $C^{1+\alpha}$ part of Theorem  \ref{2 thm rigidity accessible}, 
i.e., when $f$ and $g$ are $C^{r}$-smooth $(r>1)$, 
the conjugacy $h$ is $C^r$-smooth along the stable manifolds.  Although, by $C^1$-case, we already got an Anosov map $f$ and then the $C^{1+\alpha}$-case follows from the main result of \cite{GS22} (see Remark \ref{1 rmk compare to endo}).
Here, we provide a  proof for the $C^{1+\alpha}$-case which depends on neither the $C^1$-case nor the work \cite{GS22}.

\subsection{Proof of the $C^1$-regularity part of Theorem \ref{2 thm rigidity accessible}}\label{subsec C1 case}

	\begin{lem}\label{4 lem conjugacy}
		The semi-conjugacy $h:\mtt$ is indeed a conjugacy.
	\end{lem}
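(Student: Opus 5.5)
The plan is to show that the closed set $\bar{\Gamma}$ of Proposition \ref{2 prop foliation on R2} is all of $\RR^2$, using the quantitative $u$-accessibility of Theorem \ref{3  thm dichotomy}. Injectivity of $H$ will then follow from the structure of its fibers.

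\textbf{Step 1: $\Gamma\neq\emptyset$.} By Proposition \ref{2 prop foliation on R2}, each fiber $H^{-1}(x)$ is either a single point or a uniformly compact center segment on which $H$ is constant. Suppose every point of $\RR^2$ lay in a nondegenerate fiber. Since $H|_{\mcf^c_F(x)}$ is monotone onto $\mcf^s_G(H(x))$, and that restriction is surjective (as $H$ is a bounded-distance perturbation of the identity), the map $H|_{\mcf^c_F(x)}$ would be a monotone surjection onto a line that is locally constant everywhere. That is impossible. So some fiber is a single point and $\Gamma\neq\emptyset$. Alternatively, one can mimic the argument used in Subsection \ref{subsec 3.1}, taking a fixed point of $G$ and the endpoints of its $H$-fiber.

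\textbf{Step 2: the projected closure is unstable-saturated in both directions.} By Proposition \ref{2 prop foliation on R2}, $\bar{\Gamma}$ is closed, $\mcf^u_F$-saturated and $\ZZ^2$-periodic. Hence $K:=\pi(\bar{\Gamma})$ is a nonempty closed subset of $\TT^2$. The key property is this: for every point $x\in\TT^2$ and every lift $x'$ of $x$, we have $x\in K$ if and only if $x'\in\bar{\Gamma}$, by $\ZZ^2$-periodicity. Now take a chain $x_0,\dots,x_k$ with lifts $x'_i$ and $x_{i+1}\in\pi(\mcf^u_F(x'_i,R))$, as given by Theorem \ref{3  thm dichotomy}.
\begin{itemize}
\item If $x_i\in K$, then $x'_i\in\bar{\Gamma}$. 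By saturation $\mcf^u_F(x'_i)\subset\bar{\Gamma}$, so $x_{i+1}\in K$.
\item Conversely, if $x_{i+1}\in K$, its lift $y'$ lying on $\mcf^u_F(x'_i)$ belongs to $\bar{\Gamma}$. Then $\mcf^u_F(y')=\mcf^u_F(x'_i)\subset\bar{\Gamma}$, so $x_i\in K$.
\end{itemize}
So membership in $K$ propagates along such chains in both directions.

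\textbf{Step 3: $K=\TT^2$.} Fix $x\in K$ and chain it to $w_0$; Step 2 gives $w_0\in K$. Then, for arbitrary $z\in\TT^2$, chain $z$ to $w_0$ and propagate backwards to get $z\in K$. Thus $K=\TT^2$ and $\bar{\Gamma}=\RR^2$. It is essential here that Theorem \ref{3  thm dichotomy} provides chains made of projections of leaves of $\mcf^u_F$ from $\RR^2$. Abstract unstable manifolds along arbitrary orbits in $\TT^2_f$ would not interact with $\bar{\Gamma}$ in this way, and this is why the quantitative version of accessibility is needed.

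\textbf{Step 4: conclusion.} By Proposition \ref{2 prop foliation on R2}, $H|_{\mcf^c_F(x)}$ is not locally constant at any $x\in\bar{\Gamma}=\RR^2$. A nondegenerate fiber, being a center segment on which $H$ is constant, would have interior points where $H$ is locally constant along the center leaf. So all fibers are points and $H$ is injective. Since $H$ commutes with deck transformations, $h$ is injective. It is therefore a continuous bijection of the compact space $\TT^2$, hence a homeomorphism and a topological conjugacy.

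The main obstacle I expect is Step 2, namely the careful bookkeeping of lifts showing that saturation passes from $\bar{\Gamma}\subset\RR^2$ to $K\subset\TT^2$ along the chains in both directions. The nonemptiness in Step 1 should be routine.
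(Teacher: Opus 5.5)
Your proof is correct and is essentially the paper's argument: membership in the $\ZZ^2$-periodic, $\mcf^u_F$-saturated set $\bar{\Gamma}$ is propagated along the chains of lifted unstable leaves from Theorem~\ref{3  thm dichotomy}, and the non-local-constancy of $H$ along center leaves at points of $\bar{\Gamma}$ then rules out nondegenerate fibers. The only difference is that the paper argues by contradiction, joining some $x\in\Gamma$ to some $y\notin\bar{\Gamma}$, whereas you prove $\bar{\Gamma}=\RR^2$ directly.

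One small repair is needed in Step 1. Under your hypothesis, $H$ is \emph{not} locally constant at fiber endpoints, so ``locally constant everywhere'' does not follow. Argue instead that each center leaf $\mcf^c_F(x)$ contains uncountably many fibers, since their images fill the line $\mcf^s_G(H(x))$, but only countably many nondegenerate ones, because disjoint nondegenerate intervals in a line are countable. Hence $\Gamma\neq\emptyset$.
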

	
	\begin{proof}[Proof]
		By contradiction, we assume that $H$ is not injective. 
		Let $\Gamma$ be the set of $H$-injection points. 
		By Proposition \ref{2 prop foliation on R2}, 
		there are points $x\in \Gamma$ and $y\in \RR^2\setminus \bar{\Gamma}$. 
		It follows from Theorem \ref{3  thm dichotomy} that 
		there are points $x=x_0, x_1, x_2, \cdots, x_k\in \RR^2$,
		and $n^*, n_0, n_1,n_2\cdots n_k\in \ZZ^2$ and $R>0$ 
		such that 
		\[ 
		y+n^*\in \mcf^u_F(x_k+n_k,R) 
		\quad {\rm and}\quad x_{i+1}\in \mcf^u_F(x_i+n_i,R),
		\quad \forall\, 0\leq i\leq k-1.
		\]
		Since the set $\bar{\Gamma}$ is $\ZZ^2$-periodic and unstable manifold saturated 
		(see also Proposition \ref{2 prop foliation on R2}), 
		$x=x_0\in \Gamma$ implies that $x_1\in \bar{\Gamma}$.  
		Inductively, $y+n^*\in \bar{\Gamma}$, 
		and hence $y\in \bar{\Gamma}$. 
		This contradicts the choice $y\in\RR^2\setminus\bar{\Gamma}$.
	\end{proof}

	\begin{lem}\label{4 lem Holder conjugacy}
		The conjugacy $h:\mtt$ is bi-H\"older continuous.
	\end{lem}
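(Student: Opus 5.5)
The plan is to prove the estimate for the lift $H:\RR^2\to\RR^2$ at small scales and in both directions. Since $H$ and $H^{-1}$ commute with the deck transformations, this is enough for $h$ and $h^{-1}$ on $\TT^2$. I would treat the unstable and center directions separately and then combine them using the local product structure.

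\textbf{Unstable direction.} By Proposition \ref{2 prop foliation on R2}, the restrictions $H|_{\mcf^u_F(x)}:\mcf^u_F(x)\to\mcf^u_G(H(x))$ are already uniformly bi-H\"older, so nothing more is needed here.

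\textbf{Center direction.} I would first show that $H|_{\mcf^c_F(x)}:\mcf^c_F(x)\to\mcf^s_G(H(x))$ is uniformly bi-H\"older, using only the dynamics and the uniform continuity of $H$ and $H^{-1}$. Both are uniformly continuous because $h$ is a homeomorphism of a compact manifold by Lemma \ref{4 lem conjugacy}, and $H$ is at bounded distance from the identity.

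The key preliminary step is a \emph{topological contraction} of $F$ along center leaves. Fix $\e>0$ and let $\delta$ be its modulus for $H$, so center arcs of length at most $\e$ are mapped into stable arcs of length at most $\delta$. Since $G$ contracts $\mcf^s_G$ at rate $\mu<1$, uniform continuity of $H^{-1}$ gives $m\in\NN$ such that $F^m$ maps every center arc of length at most $\e$ to a center arc of length at most $\e/2$. Here I use Proposition \ref{2 prop foliation on R2}(3) to pass between leafwise and ambient distance. Iterating, $F^{km}$ shrinks such arcs by a factor $2^{-k}$.

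\textbf{Estimate for $H^{-1}$.} Take $x,y$ on one center leaf with $s:=d_{\mcf^s_G}(H(x),H(y))$ small. Let $n$ be minimal with $d(G^{-n}H(x),G^{-n}H(y))\geq\delta'$, where $\delta'$ is the modulus for $H^{-1}$ at scale $\e$. Then:
\begin{itemize}
\item by minimality of $n$ and the bound $\|DG^{-1}\|\le L$, we get $n\gtrsim \log(\delta'/s)/\log L$;
\item the arc from $F^{-(n-1)}x$ to $F^{-(n-1)}y$ has length at most $\e$;
\item applying the contraction of the preliminary step $\lfloor (n-1)/m\rfloor$ times gives $d(x,y)\lesssim 2^{-n/m}\e\lesssim s^{\theta}$ for an explicit $\theta>0$.
\end{itemize}

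\textbf{Estimate for $H$.} This direction is symmetric. Take $n$ minimal with $d(F^{-n}x,F^{-n}y)\geq\e$, which exists because the $G^{-1}$-images separate. Using $\|DF^{-1}\|\le L'$ gives $n\gtrsim\log(\e/d(x,y))/\log L'$, while uniform contraction of $G$ gives $d(H(x),H(y))\le\delta\mu^{n-1}$. Together these give the H\"older bound.

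\textbf{Combining the directions.} For $x,y$ close, let $z$ be the intersection $\mcf^c_F(x)\cap\mcf^u_F(y)$ given by the global product structure. The angle between $E^c_F$ and $E^u_F$ is uniformly bounded away from zero, so $d_{\mcf^c}(x,z)$ and $d_{\mcf^u}(z,y)$ are bounded by $C\,d(x,y)$. Then
\[ d(H(x),H(y))\le d(H(x),H(z))+d(H(z),H(y)), \]
and both terms are H\"older by the two estimates above. For $H^{-1}$ I would use the same decomposition for $G$ with the transverse foliations $\mcf^s_G,\mcf^u_G$. This is legitimate because $H$ maps center leaves to stable leaves and unstable leaves to unstable leaves, so $H(z)=\mcf^s_G(H(x))\cap\mcf^u_G(H(y))$.

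\textbf{Main obstacle.} The delicate part is the center direction, and specifically the bound for $H^{-1}$. Naively pushing forward by $F$ costs a factor $\|DF\|^n$, because the center direction need not contract infinitesimally. The topological contraction of the preliminary step is what replaces infinitesimal contraction. I would check carefully that the constant $m$ there is uniform over all center leaves, which comes from uniform continuity at a fixed scale.
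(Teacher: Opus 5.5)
Your argument is correct, but it handles the center direction by a genuinely different route from the paper. The overall reduction is the same in both: bi-H\"older along $\mcf^u$ from Proposition~\ref{2 prop foliation on R2}, then combine via global product structure. The difference is in how uniform H\"older continuity of $H|_{\mcf^c_F}$ and $H^{-1}|_{\mcf^s_G}$ is obtained.

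\textbf{The paper's route.} The paper argues softly:
\begin{enumerate}
\item a monotone homeomorphism of a center leaf has a Lipschitz point $x_0$;
\item H\"older continuity at a point is preserved by deck translations (Claim~\ref{4 claim Holder Z2});
\item it is transported along unstable plaques of size $R$ by the uniformly H\"older holonomies of $\mcf^u_F$ and $\mcf^u_G$ (Claim~\ref{4 claim Holder uleaf});
\item the quantitative $u$-accessibility of Theorem~\ref{3  thm dichotomy} reaches every point from $x_0$ in at most $N$ steps.
\end{enumerate}
This gives uniform constants, with an exponent that degrades with $N$.

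\textbf{Your route.} You use the classical first-separation-time argument. Your key observation is that the conjugacy itself, through uniform continuity of $H^{\pm 1}$ at a fixed scale, forces a uniform topological contraction of $F$ along center leaves. Iterating this gives an exponential rate $2^{-1/m}$. Combined with Lipschitz bounds for $F^{-1}, G^{-1}$ and the contraction of $G$ on stable leaves, it yields explicit exponents $\log(1/\mu)/\log L'$ and $\log 2/(m\log L)$.

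\textbf{What each buys.} Your route is more self-contained and more general. It needs neither accessibility, nor H\"older regularity of holonomies, nor a.e.\ differentiability of monotone maps. So it shows that $h$ is bi-H\"older as soon as it is a homeomorphism, independently of the dichotomy. It is essentially the same mechanism behind the leafwise bi-H\"older property along $\mcf^u$. The paper's route has the advantage of using the same template (good point, $\ZZ^2$-periodicity, holonomy transport, boundedly many accessibility steps) that is reused in Subsection~\ref{subsec C1+ case}. There it propagates differentiability and two-sided derivative bounds, which a separation-time argument cannot provide.

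\textbf{Two details to fix in a write-up.}
\begin{itemize}
\item You need to compare leafwise and ambient distance at \emph{small} scales. This comes from uniform continuity of $E^c_F$ and $E^s_G$ (both $\ZZ^2$-periodic): a curve tangent to a nearly constant line field is nearly straight. Local product structure also enters. It does not come from the quasi-isometry in Proposition~\ref{2 prop foliation on R2}(3), which only controls large scales.
\item Uniform continuity of $H$ and $H^{-1}$ on $\RR^2$ comes from the equivariance $H(x+n)=H(x)+n$ together with compactness of $\TT^2$. Being at bounded distance from the identity does not give it.
\end{itemize}
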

	
	\begin{proof}[Proof of Lemma \ref{4 lem Holder conjugacy}]
	
	By Proposition \ref{2 prop foliation on R2},
	the restriction of $H$ to unstable manifolds are uniformly bi-H\"older continuous, 
	and $\mcf_F^u$ and $\mcf_F^c$ admit the Global Product Structure. 
	To show that $H$ is bi-H\"older continuous,	
	it suffices to show that the restrictions of $H$ to center manifolds are also uniformly bi-H\"older continuous. 
	In the following, we only prove that $H|_{\mcf_{F}^c}$ is H\"older continuous. 
	The H\"older continuity of $H^{-1}|_{\mcf^s_G}$ can be obtained by the same method.
	
	By Lemma \ref{4 lem conjugacy} and Proposition \ref{2 prop foliation on R2}, 
	the map $H$ restricted to a center manifold $\mcf_F^c(x_0)$ is a monotonic homeomorphism. 
	Thus, there exists a Lipschitz point of $H|_{\mcf_F^c}$. 
	Without loss of generality, we assume that $x_0$ is a Lipschitz point. 
	
	We say that the map $H|_{\mcf_F^c}$ is H\"older continuous at point $x$, 
	if there are H\"older constants $0<\alpha<1, C>1$, 
	and a H\"older neighborhood $U^c(x)\subset \mcf_F^c(x)$ 
	such that 
	\[
	d_{\mcf_G^s}\big(H|_{\mcf_F^c}(x),H|_{\mcf_F^c}(y)\big)\leq C\cdot d_{\mcf_F^c}^\alpha(x,y), 
	\quad \forall\, y\in U^c(x), 
	\]
	where $d_{\mcf_{F/G}^{c/s}}(\cdot,\cdot)$ is induced by the metric restricted to the foliation $\mcf_{F/G}^{c/s}$.
	
	\begin{claim}\label{4 claim Holder Z2}
		Let  $x$ be a H\"older continuous point of $H|_{\mcf_F^c}$. Then for all $n\in\ZZ^2$,   $x+n$ is  a H\"older point of $H|_{\mcf_F^c}$ with same H\"older constants and H\"older neighborhood as $x$.
	\end{claim}
	\begin{proof}[Proof of Claim \ref{4 claim Holder Z2}]
		This follows from that the foliation $\mcf_F^c$ and the homeomorphism $H$ are $\ZZ^2$-periodic.
	\end{proof}
	
	\begin{claim}\label{4 claim Holder uleaf}
		Let $x$ be a H\"older continuous point of $H|_{\mcf_F^c}$ and $R>0$. 
		Then $H|_{\mcf_F^c}$ is H\"older continuous at any point $y\in\mcf_F^u(x,R)$ 
		with uniform H\"older constants $\alpha(x,R), C(x,R)$ and uniform size of H\"older neighborhood.
	\end{claim}
	\begin{proof}[Proof of Claim \ref{4 claim Holder uleaf}]
	Note that 
	$$H|_{\mcf_F^c(y)}=  {\rm Hol}^{\mcf_G^u}_{H(x),H(y)}  \circ H|_{\mcf_F^c(x)}\circ {\rm Hol}^{\mcf_F^u}_{y,x}.$$ 
	The conclusion follows from that 
	the holonomy maps along unstable foliations $\mcf_F^u$ and $\mcf_G^u$ are uniformly H\"older continuous 
	(see Proposition \ref{2 prop Holder foliation}).
	\end{proof}
	
	Applying Claim \ref{4 claim Holder Z2} and Claim \ref{4 claim Holder uleaf},  
	we can get the uniformly H\"older continuity of $H|_{\mcf_F^c}$ at any point 
	from the Lipschitz continuity of point $x_0$, 
	by the $u$-accessibility as in the proof of Lemma \ref{4 lem conjugacy}.
	This ends the proof of Lemma \ref{4 lem Holder conjugacy}.
	\end{proof}
	
	Now, we show that the DA map  $f$ is indeed an Anosov map. 
	Notice that $f$ is now conjugate to $g$ via a homeomorphism $h$, 
	it suffices to show that there is $\lambda<0$ such that the center Lyapunov exponent
	\begin{align}
		\lambda^c(p,f)<\lambda,\quad \forall\, p\in {\rm Per}(f).\label{eq. Anosov center exp}
	\end{align}
	Indeed, the conjugacy guarantees that $f$ inherits the transitivity and the shadowing property from $g$. 
	Then, by a standard argument involving the shadowing property with transitivity 
	(see for example \cite[Claim 2.20]{AGGS23} or \cite[Proposition 4.2]{GX2023}), 
	\eqref{eq. Anosov center exp} implies that there is a Riemannian metric $\|\cdot\|$ such that $\log \|Df|_{E^c_f(x)}\|<\frac{\lambda}{2}$, for every point $x\in\TT^2$. 
	Hence, we can conclude that $f$ is Anosov when \eqref{eq. Anosov center exp} holds.
	
	Recall that $h^{-1}|_{\mcf_g^s}$ is H\"older continuous, 
	i.e., there are $0<\alpha<1$, $C>0$ and $\e_0>0$ such that 
	\[
	d_{\mcf_f^c}(h^{-1}(x),h^{-1}(y))\leq C\cdot d_{\mcf_g^s}^\alpha(x,y),
	\quad \forall\, x\in \TT^2\ {\rm and}\ y\in\mcf_g^s(x,\e_0).
	\]
	Since $g$ is Anosov, there is a constant $\lambda_g<0$ such that 
	\begin{align}
		\lambda^s(q,g)<\lambda_g,\quad \forall\, q\in {\rm Per}(g). \label{eq. Anosov stable exp}
	\end{align}
	Let $p\in {\rm Per}(f)$ with period $m\in\NN$ and $x\in \mcf_f^c(p,\delta_0)$.  
	Denote $q=h(p)$ and $y=h(x)$. 
	Then for any $k\in\NN$,
	\begin{align*}
		d_{\mcf_f^c}\big(f^{km}(p),f^{km}(x)\big)&=d_{\mcf_f^c}\big( h^{-1}\circ g^{km}(q), h^{-1}\circ g^{km}(y)     \big)\\
		&\leq C\cdot 	d_{\mcf_g^s}^\alpha\big( g^{km}(q), g^{km}(y)  \big).
	\end{align*}
	By \eqref{eq. Anosov stable exp}, $\lambda^c(p,f)\leq \alpha\cdot\lambda_g$. 
	Let $\lambda=\frac{\alpha\lambda_g}{2}$, 
	we get \eqref{eq. Anosov center exp}, 
	which ends the proof of the $C^1$ part of Theorem \ref{2 thm rigidity accessible}.

\subsection{Proof of the $C^{1+\alpha}$-regularity part of Theorem \ref{2 thm rigidity accessible}}\label{subsec C1+ case}

Let $f$ and $g$ be $C^{r}\ (r>1)$. Then,
the unstable foliations $\mcf^u_F, \mcf^u_G$ of the lifts $F, G:\mrr$ of $f,g$ respectively are uniformly $C^1$-smooth,  see Proposition \ref{2 prop Holder foliation}.
We first show that  $H:\mrr$ the lift of $h$, restricted on each leaf of  $\mcf^c_F$ is  a diffeomorphism. 
	
	Fix $x\in\RR^2$. Recall that the restriction $H|_{\mcf^c_F}: \mcf^c_F(x)\to \mcf^s_G(H(x))$ is monotonic. 
	It follows that there exists a differentiable point $x_0\in \mcf^c_F(x)$ for the map $H|_{\mcf^c_F}$, 
	namely, $D_{x_0}H|_{ \mcf^c_F}$ exists.
	
	\begin{claim}\label{4 claim diff trans}
		For every $R>0$, there exists $C>0$ such that 
		for all $y\in\RR^2$, $n\in\ZZ^2$ and $z\in \mcf^u_F(y+n,R)$, 
		if $y$ is a differentiable point of $H|_{ \mcf^c_F}$, 
		then so is $z$, and $\|D_{z}H|_{ \mcf^c_F}\| < C\|D_{y}H|_{ \mcf^c_F}\|$. 
	\end{claim}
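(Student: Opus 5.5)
The claim combines two transfers of differentiability: one along deck translations and one along short unstable holonomies. Both are already available as structural properties of $H$.

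\emph{Step 1: deck translations.} Since $H$ is $\ZZ^2$-periodic, $H(w+n)=H(w)+n$ for all $w$. Since $f$ and $g$ are maps on $\TT^2$, the foliations $\mcf^c_F$ and $\mcf^s_G$ are $\ZZ^2$-invariant: the center bundle $E^c_f$ depends only on the base point, and the stable foliation of $g$ is unique. Hence
\[ H|_{\mcf^c_F(y+n)}=\mathcal{T}_n\circ H|_{\mcf^c_F(y)}\circ \mathcal{T}_{-n}, \]
and $\mathcal{T}_{\pm n}$ are isometries between the corresponding leaves. So $y+n$ is a differentiable point of $H|_{\mcf^c_F}$ whenever $y$ is, and $\|D_{y+n}H|_{\mcf^c_F}\|=\|D_yH|_{\mcf^c_F}\|$. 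This reduces the claim to $n=0$, that is, to $z\in\mcf^u_F(y',R)$ with $y'=y+n$.

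\emph{Step 2: unstable holonomy.} As in Claim \ref{4 claim Holder uleaf}, I use the identity
\[ H|_{\mcf^c_F(z)}={\rm Hol}^{\mcf^u_G}_{H(y'),H(z)}\circ H|_{\mcf^c_F(y')}\circ {\rm Hol}^{\mcf^u_F}_{z,y'} .\]
It holds because $H$ maps $\mcf^u_F$-leaves to $\mcf^u_G$-leaves and $\mcf^c_F$-leaves to $\mcf^s_G$-leaves (Proposition \ref{2 prop foliation on R2}). Both $f$ and $g$ are $C^r$ with $r>1$. By Proposition \ref{2 prop Holder foliation}, and its analogue for the Anosov map $g$ with the pair $\mcf^s_G,\mcf^u_G$, both holonomies are $C^1$ diffeomorphisms between the relevant local leaves. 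Their derivatives, and the derivatives of their inverses, are bounded above by a constant depending only on $R$.

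Two points need care here. First, $z\in\mcf^u_F(y',R)$ gives $d_{\mcf^u_F}(z,y')<R$. To bound the size of the $G$-holonomy I need $d_{\mcf^u_G}(H(z),H(y'))$ to be uniformly bounded. This follows from the uniform continuity of $H$ restricted to unstable leaves (Proposition \ref{2 prop foliation on R2}), which gives a bound $R'=R'(R)$, together with quasi-isometry. Second, the $F$-holonomy is defined near $z$ on the whole center leaf by the Global Product Structure, and I only need it in a small center neighborhood. The chain rule at $z$ then shows that $z$ is a differentiable point. Writing $\tilde z={\rm Hol}^{\mcf^u_F}_{z,y'}(z)=y'$, it also gives
\[ \|D_zH|_{\mcf^c_F}\|\le \|D{\rm Hol}^{\mcf^u_G}\|\cdot\|D_{y'}H|_{\mcf^c_F}\|\cdot\|D{\rm Hol}^{\mcf^u_F}\|\le C(R)\,\|D_yH|_{\mcf^c_F}\| .\]
Enlarging $C$ slightly makes the inequality strict.

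\emph{Main obstacle.} The delicate point is the uniformity of the $C^1$ bounds on the holonomy of $\mcf^u_G$ between stable leaves. It needs the uniform $C^1$ regularity of the unstable foliation of the lift $G$, and Proposition \ref{2 prop Holder foliation} states this only for DA maps. I plan to argue that $g$ is itself partially hyperbolic with an expanding bundle and dominated splitting $E^s_g\oplus E^u_g$. The same result (\cite[Theorem 7.1]{Pesinbook04}) then applies to $G$ with the stable leaves as transversals. Uniformity over all of $\RR^2$ comes from compactness of $\TT^2_g$ together with density of projected $G$-orbits (Remark \ref{2 rmk universal to limit}).
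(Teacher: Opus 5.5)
Your proposal is correct and follows the paper's proof. Deck-periodicity of $H$ and of the center/stable foliations gives $\|D_{y+n}H|_{\mcf^c_F}\|=\|D_yH|_{\mcf^c_F}\|$, and the chain rule applied to $H|_{\mcf^c_F(z)}={\rm Hol}^{\mcf^u_G}_{H(y+n),H(z)}\circ H|_{\mcf^c_F(y+n)}\circ{\rm Hol}^{\mcf^u_F}_{z,y+n}$, with uniformly $C^1$ holonomies, gives the rest; your extra care about the sizes of the leaf segments is a detail the paper skips.

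Two small remarks. First, your ``main obstacle'' is not one: $g$ is itself a DA map homotopic to $A$, with $\mcf^s_G$ as its center foliation, so Proposition \ref{2 prop Holder foliation} applies to $G$ directly. Second, enlarging $C$ cannot make the inequality strict when $D_yH|_{\mcf^c_F}=0$, so your argument (and the paper's) really gives the non-strict bound, which suffices for Claim \ref{4 claim diff everywhere}.
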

	
	\begin{proof}[Proof of Claim \ref{4 claim diff trans}]
		It is clear that $\|D_yH|_{\mcf^c_F}\|= \|D_{\pi(y)}h|_{\mcf^c_f}\|=\|D_{y+n}H|_{\mcf^c_F}\|$ for every $n\in\ZZ^2$. 
		Consider $z\in\mcf^u_F(y+n,R)$. 
		Since $H$ maps $\mcf^c_F$ and $\mcf^u_F$ to $\mcf^s_G$ and $\mcf^u_G$ respectively 
		(see Proposition \ref{2 prop foliation on R2}), 
		the restriction $H|_{\mcf^c_F}:\mcf^c_F(z) \to \mcf^s_G(H(z))$ satisfies
		\begin{align}
			H(w)={\rm Hol}^u_{G,H(y+n),H(z)}\circ H|_{\mcf^c_F}\circ  {\rm Hol}^u_{F,z,y+n}(w),
			\quad   \forall\, w\in \mcf^c_F(z).\label{eq. 4. 2}
		\end{align}
		Since the holonomy maps of $F$ and $G$ are uniformly $C^1$-smooth, 
		$H|_{\mcf^c_F}$ is differentiable at the point $z$,  
		moreover, there exists $C>0$ depending only on $R$ such that $\|D_{z}H|_{ \mcf^c_F}\|<C\|D_{y}H|_{ \mcf^c_F}\|$. 
	\end{proof}

	By Claim \ref{4 claim diff trans} and the estimation of $u$-accessibility (Theorem \ref{3  thm dichotomy}), 
	we can deduce that the map $DH|_{\mcf^c_F}$ is differentiable
	and the derivative is uniformly bounded away from $0$.
  
	\begin{claim}\label{4 claim diff everywhere}
		The map $H|_{\mcf^c_F}$ is differentiable at every point $x\in \RR^2$ 
		and there exists $C>1$ such that
		$$
		C^{-1}<\|D_xH|_{\mcf^c_F}\|<C,
		\quad\forall\, x\in \RR^2.
		$$
	\end{claim}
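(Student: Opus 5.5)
We propagate differentiability from a single good point to the whole plane, using Claim \ref{4 claim diff trans} together with the quantitative $u$-accessibility of Theorem \ref{3  thm dichotomy}. We obtain the upper bound this way and the lower bound by running the same argument backwards.

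\emph{Upper bound.} Since $H|_{\mcf^c_F}$ is monotonic on each center leaf, Lebesgue's theorem gives a point $y_0$ where it is differentiable. Theorem \ref{3  thm dichotomy} gives $R>0$, $N\in\NN$, a point $w_0$ with lift $w^*$, and for every point $x\in\TT^2$ a chain of at most $N$ unstable plaques $\mcf^u_F(x'_i,R)$, lifted to $\RR^2$, joining $x$ to $w_0$. We use this in reverse: we want to reach an arbitrary $x\in\RR^2$ starting from $w^*$. First we connect $w^*$ to $y_0$, or to a translate $y_0+n$, by one more chain of this type, which is possible because the chain property holds for every point. Then we apply Claim \ref{4 claim diff trans} at each step of the chain. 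The claim allows a translation by $n\in\ZZ^2$ followed by a move inside $\mcf^u_F(\cdot,R)$, and that is exactly the form of the links in Theorem \ref{3  thm dichotomy}, where $x_{i+1}\in\pi(\mcf^u_F(x'_i,R))$. After at most $2N$ steps every $x\in\RR^2$ is a differentiable point, with
$$\|D_xH|_{\mcf^c_F}\|\le C_R^{2N}\|D_{y_0}H|_{\mcf^c_F}\|.$$
The constant $C_R$ from Claim \ref{4 claim diff trans} is symmetric: the relation \eqref{eq. 4. 2} can be inverted, so it applies in both directions along each link. Hence the chain may be traversed either way.

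\emph{Lower bound.} The same holonomy formula \eqref{eq. 4. 2} shows that the derivative at $z$ is comparable to the derivative at $y+n$, with multiplicative constants bounded above and below in terms of $R$ only. This holds because the holonomies of $\mcf^u_F$ and $\mcf^u_G$ are uniformly $C^1$ with uniformly $C^1$ inverses by Proposition \ref{2 prop Holder foliation}, so their derivatives are bounded away from $0$ and $\infty$. Therefore it suffices to find a single point where $D H|_{\mcf^c_F}\neq 0$; the comparability along chains then spreads this to a uniform bound $C^{-1}$ everywhere.

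To find such a point we use the inverse map. By Lemma \ref{4 lem conjugacy}, $H$ is a homeomorphism, and $H^{-1}|_{\mcf^s_G}$ is again monotonic. So it is differentiable at some point $q$. By the argument above applied to $H^{-1}$, it is then differentiable at every point with uniformly bounded derivative, since the $u$-accessibility of $g$ transfers through $H$. Since $H^{-1}\circ H=\mathrm{Id}$ on center leaves and both restrictions are differentiable everywhere, the chain rule gives $D_xH|_{\mcf^c_F}\cdot D_{H(x)}H^{-1}|_{\mcf^s_G}=1$. Hence $\|D_xH|_{\mcf^c_F}\|\ge 1/\sup\|DH^{-1}\|$.

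The main obstacle is the bookkeeping of the chains in $\RR^2$ rather than in $\TT^2$. Theorem \ref{3  thm dichotomy} supplies lifts $x'_i$, but consecutive links differ by deck translations, and each such translation must be absorbed by the invariance $\|D_yH\|=\|D_{y+n}H\|$. We also need the number of links and their sizes to be uniform, which is exactly what the quantitative part of Theorem \ref{3  thm dichotomy} guarantees.
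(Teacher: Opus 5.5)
Your proof is correct, and the propagation part is the paper's argument. You use chains of at most $2N$ plaques $\mcf^u_F(\cdot,R)$ from the quantitative $u$-accessibility, absorb deck translations by the $\ZZ^2$-periodicity of $H$ and $\mcf^c_F$, and use the two-sided form of Claim~\ref{4 claim diff trans} obtained by inverting \eqref{eq. 4. 2}. Together these give $C_1^{-1}\|D_{x_0}H|_{\mcf^c_F}\|<\|D_yH|_{\mcf^c_F}\|<C_1\|D_{x_0}H|_{\mcf^c_F}\|$ for all $y$.

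The difference is how you rule out a vanishing derivative. The paper stays inside this comparison. If $D_{x_0}H|_{\mcf^c_F}=0$, then the derivative vanishes at every point of $\mcf^c_F(x_0)$, so $H$ would be constant on that leaf. This contradicts the fact that $H$ maps $\mcf^c_F(x_0)$ onto $\mcf^s_G(H(x_0))$, and it uses only the semi-conjugacy structure of Proposition~\ref{2 prop foliation on R2}.

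You instead invoke Lemma~\ref{4 lem conjugacy}, redo the whole propagation for $H^{-1}|_{\mcf^s_G}$, and conclude with the chain rule. Redoing the propagation needs quantitative $u$-accessibility of $g$ and uniformly $C^1$ holonomies of $\mcf^u_G$ between stable leaves. This is legitimate, since Lemma~\ref{4 lem conjugacy} is proved under the weaker $C^1$ hypotheses. It also gives you differentiability of $H^{-1}$ along stable leaves at no extra cost.

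However, it is heavier. It also makes Subsection~\ref{subsec C1+ case} depend on Subsection~\ref{subsec C1 case}, which the paper deliberately avoids: the paper uses this very claim afterwards to re-derive that $h$ is a conjugacy and $f$ is Anosov.

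Finally, your two lower-bound mechanisms are redundant. Once the chain rule gives $\|D_xH|_{\mcf^c_F}\|\ge 1/\sup\|DH^{-1}|_{\mcf^s_G}\|$ everywhere, you no longer need the reduction to a single point with nonzero derivative. Conversely, the paper's one-line argument makes the detour through $H^{-1}$ unnecessary.
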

	
	\begin{proof}[Proof of Claim \ref{4 claim diff everywhere}]
		Let $x_0\in\RR^2$ be a differentiable point of $H|_{\mcf^c_F}$. 
		Let constants $R>0, N\in\NN$ be given by the second part of Theorem \ref{3  thm dichotomy}.
		
		Let $y\in\RR^2$. By Theorem \ref{3  thm dichotomy}, 
		there exist at most $2N$ unstable manifolds connecting points $\pi(x_0)$ and $\pi(y)$. 
		In particular, these unstable manifolds can be chosen as projections of the unstable manifolds on $\RR^2$.  
		Then, it follows that there exists $k\leq 2N$ 
		with $x_0, x_1,\dots, x_k\in\RR^2$ and $m,n_0, n_1, n_2,\dots, n_k\in\ZZ^2$ 
		such that $x_{i+1}\in\mcf^u_F(x_i+n_i,R)$ for $0\leq i\leq k-1$, 
		and $y+m\in \mcf^u_F(x_k+n_k,R)$.
		
		Applying Claim \ref{4 claim diff trans} to $x_i\ (1\leq i\leq k)$, 
		$H|_{\mcf^c_F}$ is differentiable at points $x_i$ and $y+m$.  
		Moreover, there exists $C_0=C_0(R)>1$ such that 
		\[  C_0^{-1}\|D_{x_i}H|_{\mcf^c_F}\|<\|D_{x_{i+1}}H|_{\mcf^c_F}\|<C_0\|D_{x_i}H|_{\mcf^c_F}\|, \] 
		and 
		\[ C_0^{-1}\|D_{x_k}H|_{\mcf^c_F}\|<\|D_{y+m}H|_{\mcf^c_F}\|=\|D_yH|_{\mcf^c_F}\|<C_0\|D_{x_k}H|_{\mcf^c_F}\|. \]
		Let $C_1=(C_0)^{2N}$. 
		Then by induction, 
		\begin{align}
			C_1^{-1}\|D_{x_0}H|_{\mcf^c_F}\|<\|D_yH|_{\mcf^c_F}\|<C_1\|D_{x_0}H|_{\mcf^c_F}\|,
			\quad \forall\, y\in\RR^2. \label{eq. 4.1}
		\end{align}
		
		Note that \eqref{eq. 4.1} implies $\|D_{x_0}H|_{\mcf^c_F}\|\neq 0$. 
		Otherwise, for every point $y\in\mcf^c_F(x_0)$, $\|D_yH|_{\mcf^c_F}\|=0$. 
		This contradicts the fact that $H|_{\mcf^c_F(x_0)}$ is surjective. 
		Take $C>\max\{ C_1\|D_{x_0}H|_{\mcf^c_F}\|, C_1\|D_{x_0}H|_{\mcf^c_F}\|^{-1}\}$, 
		which is a constant satisfying Claim \ref{4 claim diff everywhere}.
	\end{proof}
	
	By Claim \ref{4 claim diff everywhere}, 
	there is  point  $z\in\mcf^c_f(x)$ for some point $x\in \TT^2$ such that $Dh|_{\mcf^c_f}$ is continuous at $z$; 
	see for example \cite[Theorem 7.3]{O1980}. 
	Hence, by the $u$-accessibility and the uniformly $C^1$-smooth unstable foliations,  
	we get that $Dh|_{\mcf^c_f}$ is actually continuous at every point in $\TT^2$. 
	By Claim \ref{4 claim diff everywhere} again,  
	$h$ restricted to each center leaf is a diffeomorphism. 
	In particular, $f$ is Anosov and $h$ is a conjugacy.  Moreover, since $h$ is $C^1$-smooth along stable manifolds, $\lambda^s(p,f)=\lambda^s(h(p),g)$, for all $p\in{\rm Per}(f)$. By classical rigidity argument of \cite{dL92}, this periodic data  implies that $h|_{\mcf^s_f}$ is $C^r$-smooth. 
	It  completes the proof of $C^{1+\alpha}$-part of Theorem \ref{2 thm rigidity accessible}.

\section{DA maps without Anosov factors}\label{sec DA without Anosov factors}

In this section, we prove Theorem \ref{main-thm-factor}, 
i.e., there is a $C^1$-open subset $\mathcal{U}$ in the space of maps on $\TT^2$ such that 
every map in $\mathcal{U}$, homotopic to Anosov, admits no Anosov factor. 

First, we construct an example which originates from \cite[Section 17.2]{KH95}. 

\begin{prop}\label{5 prop example}
	Let $A:\mtt$ be a linear Anosov map. 
	There is a $C^\infty$-smooth DA map $f:\mtt$ homotopic to $A$ 
	such that $f$ is not Anosov. 
	In particular, $f$ is not semi-conjugate to any Anosov map. 
	Moreover, $f$ indeed admits a source, i.e., a fixed point with the positive center Lyapunov exponent.
\end{prop}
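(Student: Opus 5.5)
We construct $f$ as a local perturbation of $A$ near its fixed point, in the spirit of the DA construction of \cite[Section 17.2]{KH95}, so that the stable direction of $A$ turns into a weakly expanding center direction at that point.

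\textbf{Setup.} Let $A$ have eigenvalues $|\mu|<1<|\lambda|$ with $|\det A|=|\lambda\mu|\ge 2$, so $|\lambda|\ge 2/|\mu|$. Work in linear coordinates $(s,u)$ along $L^s\oplus L^u$ near the fixed point $0$. Define
\[
f(s,u)=\big(\mu s+\rho(|(s,u)|)\,\phi(s),\ \lambda u\big),
\]
where $\rho$ is a bump function equal to $1$ near $0$ and $0$ outside a small ball $B_\delta$. The function $\phi$ is chosen so that $\mu+\phi'(0)=\kappa$ for some $1<\kappa<|\lambda|/2$. For example, take $\phi(s)=(\kappa-\mu)s$ near $0$; the construction can also be done via a time-$t$ map of a vector field tangent to $L^s$, as in Kato--Morimoto/Ma\~n\'e.

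\textbf{Verification.} First, $f$ is homotopic to $A$ through the family $t\rho\phi$, and it is a local diffeomorphism provided $\partial_s$ of the first coordinate never vanishes. This requires choosing $\phi$ monotone, e.g.\ $s\mapsto \mu s+\rho\phi$ with derivative bounded between $\min(|\mu|,1)/2$ and $\kappa$. The main work is to keep $\delta$ small and control $\|\partial\rho\|\cdot|\phi|\lesssim 1$ on $B_\delta$, which is possible because $|\phi(s)|\le C|s|\le C\delta$ while $|\rho'|\sim 1/\delta$, giving $O(1)$ off-diagonal terms.

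Second, partial hyperbolicity with a uniformly expanding direction. Since the $u$-coordinate is exactly $\lambda u$, we have $Df=\begin{pmatrix} a & b\\ 0&\lambda\end{pmatrix}$ with $|a|\le \kappa<|\lambda|/2$ and $|b|$ bounded. A cone $\{|v_s|\le \eta |v_u|\}$ is then forward invariant for suitable $\eta$, and vectors in it are expanded by a factor of at least roughly $|\lambda|-|b|\eta>1$. This gives the unstable cone field and domination, hence a splitting in the sense of Definition~\ref{2 def ph endo}(a). The center bundle is $E^c=L^s$, which is exactly invariant since it is the kernel direction of the triangular form.

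\textbf{The source and the consequences.} At $0$ we have $Df(0)=\mathrm{diag}(\kappa,\lambda)$, so $0$ is a fixed point with center exponent $\log\kappa>0$, i.e.\ a source. Therefore $f$ is not Anosov: an Anosov map has negative exponent along $E^s$ at every periodic point, and $E^c_f(0)=E^s_f(0)$ would be forced by domination. Finally, if $f$ were semi-conjugate to an Anosov map $g$, Theorem~\ref{main-thm-conjugacy} would force $f$ to be Anosov, a contradiction. Hence $f$ is not semi-conjugate to any Anosov map.

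The hardest point is checking local invertibility of $Df$ together with the cone estimate on the transition annulus where $\rho$ varies. This is handled by the triangular structure: the determinant of $Df$ is $a\lambda$, and $a\neq0$ by the monotone choice of $\phi$.
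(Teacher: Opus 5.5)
Your overall architecture is the paper's. You perturb $A$ only in the $L^s$-coordinate near the fixed point, so $Df$ stays upper triangular with $\lambda$ in the corner. From this you get an unstable cone field, note that $0$ becomes a source, and conclude with Theorem~\ref{main-thm-conjugacy}.

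The genuine gap is the step you yourself call the hardest, $a=\partial_s\big(\mu s+\rho\phi\big)\neq 0$, which you settle by ``the monotone choice of $\phi$''. That inference is invalid.
With a radial cutoff, $a=\mu+\rho\,\phi'(s)+\rho'(r)\tfrac{s}{r}\phi(s)$. By your own estimate ($|\rho'|\sim 1/\delta$, $|\phi|\lesssim\delta$) the last term is of order one, not small. When $\rho'\le 0$ and $s\phi(s)\ge 0$ it is also nonpositive, and monotonicity of $\phi$ says nothing about it.

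Concretely, take the line $u=0$ with $\phi(s)=(\kappa-\mu)s$ on the support of $\rho$. There $a=\mu+(\kappa-\mu)\frac{d}{dr}\big(r\rho(r)\big)$. If $\rho\equiv 1$ on $[0,r_0]$ and $\rho\equiv 0$ on $[r_1,\infty)$, then $r\rho(r)$ drops from $r_0$ to $0$, so $\min a\le \mu-(\kappa-\mu)\frac{r_0}{r_1-r_0}$. Equivalently, $s\mapsto \mu s+\rho(|s|)\phi(s)$ must go from the value $\kappa r_0$ at $r_0$ to $\mu r_1$ at $r_1$, so it can be monotone only if $r_1/r_0>\kappa/\mu$.
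A standard bump on $B_\delta$ with $|\rho'|\sim 1/\delta$ (say $r_1=2r_0$) therefore creates a critical point of $f$ as soon as $\kappa>2\mu$. Since $\kappa>1$, this always happens when $\mu\le 1/2$.
Worse, if $\mu<0$ (which your setup allows), $a$ equals $\kappa>0$ at $0$ and $\mu<0$ outside the ball, so it vanishes somewhere regardless of the profile. In that case you would need $\mu+\phi'(0)=-\kappa$ instead.

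The paper sidesteps this by separating variables: $f(x,y)=\big(\mu_s x+\mu_s\phi(y)\eta_\e(x),\,\mu_u y\big)$. Here $0\le\phi\le 1$ cuts off only in the unstable coordinate. The function $\eta$ is compactly supported with $\eta'(0)=K$ and $-\delta<\eta'\le K$, which forces a long, slowly decaying tail, the same phenomenon as above. The diagonal entry $P_\e=\mu_s\big(1+\phi(y)\eta_\e'(x)\big)$ then lies in $[\mu_s(1-\delta),\mu_c]$ everywhere, because the cutoff $\phi(y)$ is never differentiated in $x$. It only enters the off-diagonal entry $Q_\e=\mu_s\phi'(y)\eta_\e(x)$, which tends to $0$ since $\eta_\e=\e\eta(\cdot/\e)$ is $C^0$-small.

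You could repair your version in any of three ways:
\begin{enumerate}
\item adopt this separated-variable form;
\item for $\mu>0$, impose $\frac{d}{dr}\big(r\rho(r)\big)>-\mu/(\kappa-\mu)$ on the radial profile;
\item carry out the flow variant you mention in passing. For $f=A\circ\Phi_t$ with $\Phi_t$ the flow of $\rho(r)\,s\,\partial_s$, one gets $a=\mu\,\partial_s\Phi^s_t$ with $0<\partial_s\Phi^s_t\le e^t$ automatically, since $\partial_s(\rho s)\le\rho\le 1$ for nonincreasing $\rho$. Taking $e^t=\kappa/|\mu|$ yields the source with no sign or monotonicity issue.
\end{enumerate}

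A minor point: for the cone $\{|v_s|\le\eta|v_u|\}$ the one-step expansion factor is only $|\lambda|/\sqrt{1+\eta^2}$, not $|\lambda|-|b|\eta$, and it may be $<1$ for large $\eta$. This is harmless. The $u$-component of $Df^nv$ is exactly $\lambda^nv_u$, so $\|Df^nv\|\ge|\lambda|^n\|v\|/\sqrt{1+\eta^2}$ for $v$ in the cone, which suffices for Definition~\ref{2 def ph endo}.
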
 

\begin{proof}
	Without loss of generality, we assume that the eigenvalues of $A$ are $0<\mu_s<1<\mu_u$. 
	Note that $\mu_s\cdot\mu_u={\rm deg}(A)\geq 2$. 
	Let $T\TT^2=L^s\oplus L^u$ be the hyperbolic splitting of $A$. 
	Up to a change of Riemannian metric, 
	we can assume that $L^s$ and $L^u$ are orthogonal, and $A$ can be represented by
    \[A(x,y)=(\mu_s x, \mu_u y).\]
	Let $0=(0,0)$ be a fixed point of $A$, and $U$ be a small neighborhood of $0$. 
	
	Let $V=(-r,r)^2$ be a rectangle such that the closure is also contained in $U$. 
	Take a $C^\infty$ bump function $\phi$ supported on $(-r,r)$ such that 
	$0\leq \phi\leq 1$, $\phi(y)=1$ for $y$ in a neighborhood of $0$, and $\phi'(0)=0$.

	Consider $1<\mu_c<\mu_u$, $K\triangleq\frac{\mu_c}{\mu_s}-1>0$, and $0<\delta<1$. 
	Let $\eta:\RR\to \RR$ be a $C^\infty$-smooth compactly supported function such that
	\[ 	
	\eta(0)=0,	\quad \eta'(0)=K	
	\quad {\rm and}\quad 	
	-\delta<\eta'(x)\leq K,\ \forall\, x\in\RR.	
	\]
	For \(\e>0\), let 
	\[
	\eta_\e(x) :=\e\cdot\eta\left(\frac{x}{\e}\right).	
	\]
	Then $\eta_\e(0)=0$, $\eta_\e'(0)=K$, $-\delta<\eta_\e'(x)\leq K$, ${\rm supp}(\eta_\e)=\e\cdot{\rm supp}(\eta)$, 
	and $\|\eta_\e\|_{C^0} \to 0$ as $\e\to  0$.  
	Moreover, take $\e$ small such that ${\rm supp}(\eta_\e)\subset (-r,r)$. 
	
	Now, we define $f=f_{\e}$ as follows
	\[ f(x,y)=A(x,y)+\big(\mu_s\phi(y)\eta_\e(x),0\big)=\big(\mu_s x+\mu_s\phi(y)\eta_\e(x),	\mu_u y\big).\]
	Then 
	\[	D_{(x,y)}f =
	\begin{pmatrix}
		P_\e(x,y)&Q_\e(x,y)\\
		0&\mu_u
	\end{pmatrix}, 
	\]
	where \[P_\e(x,y)=	\mu_s\big(1+\phi(y)\eta_\e'(x)\big)\quad {\rm and} \quad Q_\e(x,y)	= \mu_s\phi'(y)\eta_\e(x).\]
	Since $\|\eta_\e\|_{C^0}
	\to0$ as $\e\to0$ and $-\delta <\eta_\e'(x)\leq K$, we have 
	\begin{align}
		\|Q_\e\|_{C^0}	\leq	\mu_s\|\phi'\|\cdot\|\eta_\e\|_{C^0}
		\to 0  \quad {\rm and}\quad \mu_s(1-\delta) \leq P_\e(x,y) \leq\mu_s(1+K) =\mu_c. \label{eq. example}
	\end{align}
	This implies $\det D_{(x,y)}f =\mu_uP_\e(x,y) \geq	\mu_u\mu_s(1-\delta)>0$. 
	Thus, $f$ is a $C^\infty$ local diffeomorphism.  
	
	Note that $f$ is homotopic to $A$, 
	by replacing $\mu_s\phi(y)\eta_\e(x)$ 
	by $t\cdot\mu_s\phi(y)\eta_\e(x)$ with $0\leq t\leq 1$ in the construction of $f$, 
	we indeed get an isotopy from $A$ to $f$. 
	It is clear that $0=(0,0)$ is also a fixed point of $f$ with two Lyapunov exponents $\log \mu_u>\log\mu_c>0$, 
	and hence a source for $f$. 
	In particular, $f$ is not Anosov.
		
	The rest is to prove that $f$ admits a partially hyperbolic splitting with a unstable bundle. 
	Then $f$ is a DA map but not Anosov. 
	By Theorem \ref{main-thm-conjugacy}, $f$ admits no Anosov factor.
	
	To show the partial hyperbolicity of $f$, 
	it suffices to prove that 
	there is a $Df$-invariant unstable cone-field 
	(this sufficiency is well-known and we refer to \cite{GX2023} for more details). 
	Precisely, we show that	there are constants $a>0$, $\kappa>1$ and a family of cones
	\[	\mathcal{C}_a^u(z):=
	\left\{ 	v=v_s+v_u\in L^s(z)\oplus L^u(z),\ 
	\| v_u\|\geq a\| v_s\|	\right\}	\]
	such that for all $z\in\TT^2$ and $v\in\mathcal{C}_a^u(z)$,
	\begin{align}
		Df_z\big(\mathcal{C}_a^u(z)\setminus\{0\}\big)
		\subset	{\rm Int}\Big( \mathcal{C}_a^u(f(z))\Big)\quad {\rm and}\quad \| Df_zv\|\geq\kappa\| v\|.
		\label{eq. example 1}
	\end{align}

	In the following, we prove \eqref{eq. example 1}.   
	Let $a>0$ be large enough such that
	\[\kappa :=\frac{\mu_u a}{\sqrt{1+a^2}} > \mu_c,\] 
	and let $\e>0$ be small enough such that
	\begin{align}
		a\| Q_\e\|_{C^0}<\mu_u-\mu_c.\label{eq. example 2}
	\end{align}
	Then $a,\kappa$ satisfy \eqref{eq. example 1} 
	with respect to the map $f=f_{\e}$ where we fix a small $\e>0$.

	Indeed, let $v=(v_s,v_u)\in\mathcal{C}_a^u(z)\setminus\{0\}$. 
	It is clear that $f(z)=A(z)$, for any $z\in \TT^2\setminus V$. 
	Thus, \eqref{eq. example 1} holds for all $z\in \TT^2\setminus V$ and $v\in \mathcal{C}_a^u(z)$.  
	Now, we take $z\in V$. Then,
	\[ Df_zv = \big( P_\e(z)v_s+Q_\e(z)v_u,\mu_uv_u\big).\]
	Since $ \| v_u\|\geq a \| v_s\|$,  
	it follows from \eqref{eq. example} that
	\begin{align*}
			\| P_\e(z)v_s+Q_\e(z)v_u	\|
		\leq	|P_\e(z)|\cdot \|v_s\|+	|Q_\e(z)|\cdot\|v_u\| 	
		\leq	\left(	\frac{\mu_c}{a}	+	\| Q_\e\|_{C^0}	\right)	\| v_u\|.
	\end{align*}
	Therefore, by \eqref{eq. example 2},
	\begin{align*}
		a\| P_\e(z)v_s+Q_\varepsilon(z)v_u\|<	\mu_u\| v_u\|.
	\end{align*}
	Thus, $Df_zv\in{\rm Int}\big(\mathcal{C}_a^u(f(z))\big)$.  
	Moreover, since $\|v\|^2=\|v_s\|^2+\|v_u\|^2\leq \frac{1}{a^2}\| v_u\|^2+\| v_u\|^2$,
	\[ \| v_u\| \geq \frac{a}{\sqrt{1+a^2}}\|v\|. \]
	Note that $\mu_uv_u$ is a coordinate component of $Df_zv$,  
	therefore
	\[ \| Df_zv\| \geq \mu_u\lVert v_u\| \geq
	\frac{\mu_ua}{\sqrt{1+a^2}}\|v\| = \kappa\|v\|. \]
	As a result, \eqref{eq. example 1} holds for all $z\in\TT^2$ and $v\in\mathcal{C}_a^u(z)$. 
	This completes the proof of Proposition \ref{5 prop example}.
	\end{proof}

Finally, by the $C^1$-openness of DA maps, we obtain Theorem \ref{main-thm-factor}.

\begin{proof}[Proof of Theorem \ref{main-thm-factor}]
	For any linear Anosov map $A$ on $\TT^2$, by Proposition \ref{5 prop example}, 
	there is a DA map $f$ homotopic to $A$ without any Anosov factor. 
	Let $\mathcal{U}_f$ be a $C^1$ neighborhood of $f$ in the space of local diffeomorphisms. 
	When $\mathcal{U}$ is small enough, 
	every map  $g\in \mathcal{U}_f$ is homotopic to $A$ and partially hyperbolic with a unstable bundle 
	(with respect to orbits), namely, $g$ is a DA map. 
	One can check the $C^1$-openness of partial hyperbolicity by lifting to the universal cover $\RR^2$ 
	(see \cite{ManePugh1975}), or check the persistence of unstable cone-fields. 
	Moreover, $f$ admits a source, and so does its $C^1$-small perturbation $g$. 
	This implies that $g$ is DA but not Anosov. 
	Thus, by Theorem \ref{main-thm-conjugacy}, $g$ cannot be semi-conjugate to any Anosov map.
\end{proof}

\section*{Acknowledgement}
The authors would like to thank Yi Shi for suggesting this rigidity project, 
which is motivated by a question of J\'{e}r\^{o}me Buzzi 
at the conference of Beyond Uniform Hyperbolicity (2023).  
R. Gu was partially supported by NSFC (Nos. 12571203, 12601354);
M. Xia was partially supported by NSFC (12501238) and
the Fundamental Research Funds for the Central Universities (DUT24RC(3)112).

\appendix

\section{A special leaf guarantees the semi-conjugacy}
\label{sec-app-a}

In this appendix, we give the proof of  Proposition \ref{2 prop special implies semi-conjugacy}.  
Let $f$ be a  DA map on $\TT^2$ with linearization $A$. 
Let $F:\mrr$ be a lift of $f$ and $H:\mrr$ be a semi-conjugacy from $F$ to $A$.  
In Proposition \ref{2 prop special implies semi-conjugacy}, 
we assume that there is a periodic special unstable manifold (provided by Lemma \ref{3 lemma xn}).  
This assumption will make the proof simpler. Here, we establish the result for a more general case.

\begin{prop}\label{5 prop special to semi-conj}
	If there exists a point $x_0\in\RR^2$ such that $\mcf^u_F(x_0+n)=\mcf^u_F(x_0)+n$ for all $n\in\ZZ^2$, 
	then $f$ is semi-conjugate to $A$. 
\end{prop}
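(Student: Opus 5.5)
By Remark \ref{2 rmk H to h}, it suffices to show that the semi-conjugacy $H$ of Proposition \ref{2 prop semi-conj in R2} commutes with the deck transformations, i.e.\ $H(x+n)=H(x)+n$ for all $x\in\RR^2$ and $n\in\ZZ^2$. By Proposition \ref{2 prop semiconj on stable}(1), $H(x+n)-n$ already lies on $\tildeL^s(H(x))$. So the task is to rule out a nonzero displacement along stable leaves of $A$. I would first do this for points of the special leaf, and then spread it to all of $\RR^2$ by density and the product structure.

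\textbf{Step 1: the special leaf.} The leaf $\mcf^u_F(x_0)$ is mapped by $H$ onto the unstable leaf $\tildeL^u(H(x_0))$. The argument of Proposition \ref{2 prop foliation on R2} for $g=A$ (without $\ZZ^2$-periodicity) gives $H(\mcf^u_F(x))=\tildeL^u(H(x))$. Using $\mcf^u_F(x_0+n)=\mcf^u_F(x_0)+n$, we get
\[\tildeL^u(H(x_0+n))=H(\mcf^u_F(x_0)+n).\]
Now fix $y\in\mcf^u_F(x_0)$. The point $H(y+n)-n$ lies on the translated unstable line $\tildeL^u(H(x_0+n))-n$ and also on $\tildeL^s(H(y))$.

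To show $H(x_0+n)-n\in\tildeL^u(H(x_0))$, I would use Proposition \ref{2 prop semiconj on stable}(2) along the forward orbit. Applying $F^k$ sends the special-leaf property to $F^k(x_0)$, with translations $A^k n$. Then $H(F^kx_0+A^kn)-A^kn$ is $\tildeL^s$-displaced from $H(F^kx_0)=A^kH(x_0)$ by exactly $A^k$ applied to the original displacement. Since $A$ contracts $\tildeL^s$ this alone is not contradictory. Instead I would pull back: write $n=A^k m$ and use the bound $2C\|A|_{L^s}\|^k$ on the leaf at $F^{-k}$-preimages. This is legitimate because $F$ is a diffeomorphism of $\RR^2$ and the special property is inherited by $F^{-k}(x_0)$ with lattice $A^{-k}$-preimages. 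Concretely, the displacement at $x_0$ with translation $n\in A^k\ZZ^2$ is at most $2C\|A|_{L^s}\|^k$.

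For general $n\in\ZZ^2$, I would combine two facts. The displacement $\Delta(y,n):=H(y+n)-n-H(y)$ is constant along the special leaf, because both $H(\mcf^u_F(x_0))+n$ and $H(\mcf^u_F(x_0+n))$ are unstable lines and stable holonomy between unstable lines of a linear map is a translation. It is also additive in $n$. Hence $\Delta(\cdot,n)$ is a homomorphism $\ZZ^2\to L^s$ on this leaf. Since $\Delta(\cdot,A^k m)=A^k$-scaled relation gives decay, the homomorphism is $A$-equivariant and must vanish: $A$ restricted to the rank-two lattice image in the line $L^s$ would force $\mu_s$-contraction on an integer structure, and irrationality of $L^s$ kills it.

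\textbf{Step 2: spreading.} The set $\bigcup_n\mcf^u_F(x_0+n)=\bigcup_n(\mcf^u_F(x_0)+n)$ maps under $H$ to $\bigcup_n\tildeL^u(H(x_0)+n)$, which is dense by Proposition \ref{2 prop minimal foliation}. On this set $H$ commutes with translations. Points $z$ whose image lies in this dense set satisfy $H(z+n)=H(z)+n$. Since $H$ is continuous and surjective, I would take any $x$, approximate $H(x)$ by such images via center leaves (using Global Product Structure and center-monotonicity), and pass to the limit, using uniform continuity of $H$.

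The main obstacle is this last density step. Approximation in the image does not directly give approximation in the domain, because $H$ may collapse center segments. I would handle it by choosing $z_k$ on $\mcf^c_F(x)$ close to $x$ with $H(z_k)\to H(x)$, which monotonicity permits, and then using continuity of $H$ at $x+n$.
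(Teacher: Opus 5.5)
\textbf{Gap in Step 1.} The argument fails at the sentence ``It is also additive in $n$.'' The cocycle identity only gives $\Delta(y,n+m)=\Delta(y+m,n)+\Delta(y,m)$. For $y\in\mcf^u_F(x_0)$, the point $y+m$ lies on a \emph{different} leaf, $\mcf^u_F(x_0)+m=\mcf^u_F(x_0+m)$. Your holonomy argument does show that $\Delta(\cdot,n)$ is constant on each such leaf, but the constant depends on the leaf. Writing $w(m):=H(x_0+m)-m-H(x_0)\in L^s$, one has $\Delta(\cdot,n)\equiv w(m+n)-w(m)$ on $\mcf^u_F(x_0)+m$. So additivity on the leaf says exactly that $w$ is a homomorphism. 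Since $|w|\le 2C$, that is equivalent to $w\equiv 0$, which is precisely what Step 1 is supposed to prove. The argument is therefore circular: nothing in it compares the displacement on different translates $\mcf^u_F(x_0)+m$, and that comparison is the real content of the proposition.

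The closing vanishing argument is also misattributed. A bounded homomorphism vanishes for trivial reasons. By contrast, $A$-equivariance plus irrationality does not force vanishing: if $\rho:\RR^2\to L^s$ is the projection along $L^u$, then $\rho|_{\ZZ^2}$ is a nonzero $A$-equivariant homomorphism. Moreover, equivariance of $w$ would itself require $\Delta(F^{-1}(x_0),\cdot)=\Delta(x_0,\cdot)$, which you have not shown.

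\textbf{How the paper supplies the comparison.} It sets $\overline{H}(x)=H(x-n)+n$ on $\mcf^u_F(x_0+n)$ and proves $\overline{H}$ is uniformly continuous on $\mathbf{F}_0=\mcf^u_F(x_0)+\ZZ^2$. The key is to iterate exact translates: a failure of uniform continuity at $\delta_t$-close points of $\mcf^u_F(x_0+n_t)$ and $\mcf^u_F(x_0+m_t)$ forces $\mcf^u_F(x_0)+l(n_t-m_t)$ to stay $l\delta_t$-close to $\mcf^u_F(x_0)$, while $\tildeL^u(H(x_0))+l(n_t-m_t)$ is $l\eta_0$-far from $\tildeL^u(H(x_0))$, contradicting $\|H-{\rm Id}\|<C$. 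The paper then extends $\overline H$ to $\bar{\mathbf{F}}_0$ and shows $\bar{\mathbf{F}}_0$ is $F$-invariant using $\bar\Gamma\subset\bar{\mathbf{F}}_0$. Finally it gets $\overline{H}=H$ there, because both conjugate $F$ to $A$ and stay $C$-close to the identity.

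\textbf{A repair closer to your set-up.} Use monotonicity of $H$ on $\mcf^c_F(x_0)$. The translates $\mcf^u_F(x_0)+m$ cross $\mcf^c_F(x_0)$ in the order of $\rho(m)$ on $L^s$, so $\rho(m)+w(m)$ is monotone in $\rho(m)$. Combine this with:
\begin{itemize}
\item the bound $|w(m+n)-w(m)|\le 2C\|A|_{L^s}\|^k$ for $n\in A^k\ZZ^2$, which is Proposition \ref{2 prop semiconj on stable}(2) applied at $x_0+m$;
\item the density of $\rho(A^k\ZZ^2)$ in $L^s$.
\end{itemize}
Together these force $w$ to be constant, hence $w\equiv 0$.

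\textbf{Step 2 also needs adjusting.} Suppose $x$ is interior to a collapsed center segment $I=H^{-1}(H(x))$. Then $\mathbf{F}_0$ need not accumulate at $x$, so continuity of $H$ at $x+n$ does not help. Instead, approach both endpoints $e_\pm$ of $I$ by points of $\mathbf{F}_0\cap\mcf^c_F(x)$. This is possible because $H(\mathbf{F}_0)$ meets $\tildeL^s(H(x))$ densely and each translate crosses $\mcf^c_F(x)$. You then get $H(e_\pm+n)=H(x)+n$, and monotonicity of $H$ on $\mcf^c_F(x+n)=\mcf^c_F(x)+n$ finishes the argument. This is essentially the paper's final step via $\bar\Gamma$ and $H(\bar\Gamma)=\RR^2$.
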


Note that Lemma \ref{3 lem x_0} already gives the assumption of Proposition \ref{5 prop special to semi-conj}.

\begin{proof}[Proof of Proposition \ref{5 prop special to semi-conj}]
We construct a ``big'' set $\mathbf{F}$ foliated by unstable leaves.
\begin{align}
	\mathbf{F}:=\bigcup_{k\in\NN, n\in\ZZ^2} \mcf^u_F\big(F^k(x_0)+A^kn\big). \label{eq. prop 3.4 set kn}
\end{align}
Since $x_0$ satisfies the assumption, for all $k\in\NN$ and $n\in\ZZ^2$,
\begin{align}
	\mcf_F^u\big(F^k(x_0)+A^kn\big)=F^k\big(\mcf^u_F(x_0)+n\big)=\mcf_F^u\big(F^k(x_0)\big)+A^kn.\label{eq. prop 3.4 kn}
\end{align}
 For $k\in\NN$, let 
\[\mathbf{F}_0:=\bigcup_{n\in\ZZ^2} \mcf^u_F(x_0+n)=\mcf^u_F(x_0)+\ZZ^2,\quad \mathbf{F}_k:=\bigcup_{n\in\ZZ^2} \mcf^u_F\big(F^k(x_0)+A^kn\big)=\mcf^u_F\big(F^k(x_0)\big)+A^k\ZZ^2.\]

 When $\pi(\mcf^u_F(x_0))$ is a periodic leaf,   we will denote by $x_0$, the unique periodic point of $f$ on $\pi(\mcf^u_F(x_0))$.  Then, we can  further assume  that $x_0$ is a  fixed point. We note that in this case, the sets $\mathbf{F}$ and $\mathbf{F}_0$ coincide.  This will make the following proof easier. But keep in mind, we consider the general case here.

\begin{lem}\label{5 lem Finvariant}
	The closure $\bar{\mathbf{F}}$ is  $F$-invariant, and so is $\bar{\mathbf{F}}_0$
\end{lem}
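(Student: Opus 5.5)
The plan is to show $F(\mathbf{F})\subset\mathbf{F}$ and $F(\mathbf{F}_0)\subset\mathbf{F}$ (in fact $F(\mathbf{F}_k)= \mathbf{F}_{k+1}$). Then continuity of $F$ gives $F(\bar{\mathbf{F}})\subset\bar{\mathbf{F}}$. For $\bar{\mathbf{F}}_0$ I will not aim at $F(\mathbf{F}_0)\subset\mathbf{F}_0$ directly. Instead I will show $\bar{\mathbf{F}}_0=\bar{\mathbf{F}}$, so that the invariance of $\bar{\mathbf{F}}_0$ follows from that of $\bar{\mathbf{F}}$. Here "$F$-invariant" is read as forward invariance $F(\bar{\mathbf F})\subset\bar{\mathbf F}$. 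Since $F$ is a homeomorphism of $\RR^2$, I will also check the reverse inclusion below.

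\textbf{Step 1: forward invariance of $\mathbf F$.} Since $F$ maps unstable leaves to unstable leaves and $F(y+n)=F(y)+An$ for the lift of the linearization, \eqref{eq. prop 3.4 kn} gives
\[
F\big(\mcf^u_F(F^k(x_0))+A^kn\big)=\mcf^u_F(F^{k+1}(x_0))+A^{k+1}n .
\]
Hence $F(\mathbf{F}_k)=\mathbf{F}_{k+1}$. Taking the union over $k$ yields $F(\mathbf{F})=\bigcup_{k\ge1}\mathbf{F}_k\subset\mathbf{F}$, and therefore $F(\bar{\mathbf F})\subset\bar{\mathbf F}$.

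\textbf{Step 2: the key claim $\mathbf{F}_k\subset\bar{\mathbf{F}}_0$ for every $k$.} This is the main point, and I would argue it with the semi-conjugacy $H$ to $A$ from Proposition \ref{2 prop semi-conj in R2}. $H$ maps unstable leaves of $F$ into leaves of $\tildeL^u$; this is the property from \cite{GX2023} that holds in the merely homotopic case, in the form $H(\mcf^u_F(x))=\tildeL^u(H(x))$. By Proposition \ref{2 prop semiconj on stable}(1), $H(\mathbf F_0)$ is a union of leaves $\tildeL^u(H(x_0)+n+s_n)$ with $s_n\in \tildeL^s$. By Proposition \ref{2 prop minimal foliation}, these leaves accumulate on every leaf of $\tildeL^u$, as in Claim \ref{3 claim closure}.

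To pass back to $\RR^2$, take $y\in\mathbf F_k$ and consider the center leaf $\mcf^c_F(y)$. I split into two cases according to whether $y\in\bar\Gamma$.
\begin{itemize}
\item If $y\in\bar\Gamma$, then $H|_{\mcf^c_F(y)}$ is not locally constant at $y$. Arbitrarily short center arcs around $y$ therefore have nondegenerate $H$-images, and these images are met by leaves of $H(\mathbf F_0)$. Global product structure then gives points of $\mathbf F_0$ on those arcs, exactly as in Claim \ref{3 claim closure}, so $y\in\bar{\mathbf F}_0$.
\item If $y\notin\bar\Gamma$, then $y$ lies in an interval $H^{-1}(z)$ collapsed by $H$. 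I expect this case to be the main obstacle. My first attempt is to use that $\mcf^u_F(y)\subset\mathbf F_k$ is a whole leaf and $\bar\Gamma$ is $\mcf^u_F$-saturated, and to show that the unstable leaf through $y$ must meet points where $H$ is not locally constant. For this I would use that $\mathbf F_k$ is the image of $\mathbf F_0$ under $F^k$, together with the bounded length of the collapsed intervals and the expansion along $\mcf^u$. If that fails, I would replace the pointwise argument by a limiting one. The idea is to approximate $F^k(x_0)+A^kn$ by translates $x_0+m$ using Proposition \ref{2 prop semiconj on stable}(2) and the equivariance \eqref{eq. prop 3.4 kn}, and then pass to the limit of leaves using continuity of $\mcf^u_F$.
\end{itemize}

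\textbf{Step 3: conclusion.} Step 2 gives $\bar{\mathbf F}=\bar{\mathbf F}_0$, so Step 1 yields $F(\bar{\mathbf F}_0)\subset\bar{\mathbf F}_0$. For the backward inclusion, note that $F^{-1}(\mathbf F_{k})=\mathbf F_{k-1}$ for $k\ge1$. It then suffices to show $F^{-1}(\mathbf F_0)\subset\bar{\mathbf F}_0$, which follows by the same density argument as in Step 2, applied to $F^{-1}(x_0)$.
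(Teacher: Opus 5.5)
\textbf{There is a genuine gap.} Your Step 1 is correct. The case $y\in\bar{\Gamma}$ of Step 2 is also fine (it is essentially Claim \ref{5 claim gamma inclusion}), and your target $\bar{\mathbf{F}}=\bar{\mathbf{F}}_0$ is the same as the paper's. But the case $y\notin\bar{\Gamma}$, which you flag as the main obstacle, is not proved, and neither remedy can close it.

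\emph{The first remedy is impossible.} $\bar{\Gamma}$ is $\mcf^u_F$-saturated, so $y\notin\bar{\Gamma}$ forces $\mcf^u_F(y)\cap\bar{\Gamma}=\emptyset$. Moreover, the points where $H|_{\mcf^c_F}$ is not locally constant are exactly the points of $\bar{\Gamma}$. Off $\Gamma$ such points are endpoints of collapsed center segments, and by monotonicity every endpoint is a limit of points of $\Gamma$: otherwise $H$ would take only countably many values on a one-sided neighbourhood, hence be constant there. Consequently the whole leaf $\mcf^u_F(y)$ lies inside an open strip of unstable leaves that $H$ collapses onto the single line $\tildeL^u(H(y))$. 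There is nothing on that leaf for you to find.

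\emph{The second remedy is not an argument.} The points $F^k(x_0)+A^kn$ are at distance $d\big(f^k(\pi(x_0)),\pi(x_0)\big)$ from $x_0+\ZZ^2$, which need not be small. Proposition \ref{2 prop semiconj on stable}(2) controls $H$, not positions in $\RR^2$. And density of $H(\mathbf{F}_0)$ says nothing inside $H^{-1}\big(\tildeL^u(H(y))\big)$. It can only place leaves of $\mathbf{F}_0$ near the boundary of the collapsed strip, never near $\mcf^u_F(y)$.

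\emph{Step 3 inherits the same gap.} Also, $F^{-1}(\mathbf{F}_0)$ is the union of the $\deg(A)$ families $\mcf^u_F(F^{-1}(x_0+n_i))+\ZZ^2$, not only the family through $F^{-1}(x_0)$. You do need this backward direction, since Lemma \ref{5 lem H property}(5) uses $F^{\pm}$-invariance.

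\textbf{The paper's route.} The paper never analyses individual points of $\mathbf{F}_k$. It uses the lattice structure from \eqref{eq. prop 3.4 kn}: the leaves of $\mathbf{F}_0$ are $\ZZ^2$-translates of $\mcf^u_F(x_0)$, and the leaves of $\mathbf{F}_k$ are $A^k\ZZ^2$-translates of $\mcf^u_F(F^k(x_0))$, all of them genuine unstable leaves.
\begin{itemize}
\item Your $\bar{\Gamma}$-argument, applied to both families, gives one point $x\in\bar{\Gamma}\subset\bar{\mathbf{F}}_0\cap\bar{\mathbf{F}}_k$. It also gives $n^1_j\in\ZZ^2$ and $n^2_j\in A^k\ZZ^2$ such that $\mcf^u_F(x_0)+n^1_j$ and $\mcf^u_F(F^k(x_0))+n^2_j$ both approach $\mcf^u_F(x)$.
\item Translating by $-n^2_j$, the leaves $\mcf^u_F(x_0)+n^1_j-n^2_j\subset\mathbf{F}_0$ approach $\mcf^u_F(F^k(x_0))$. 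Then $\ZZ^2$-invariance of $\bar{\mathbf{F}}_0$ gives $\mathbf{F}_k\subset\bar{\mathbf{F}}_0$.
\item The backward direction is handled the same way with $A\ZZ^2$. One splits $\mathbf{F}_0$ into the cosets $\mcf^u_F(x_0+n_i)+A\ZZ^2$ and shows each lies in $\overline{\mcf^u_F(F(x_0))+A\ZZ^2}\subset\overline{F(\mathbf{F})}$.
\end{itemize}

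\textbf{Caveat on that route.} The translation literally yields closeness only near the points $x-n^2_j$, and these move off along $\mcf^u_F(F^k(x_0))$. Turning this into accumulation at every point of that leaf requires some control of unstable holonomy along the leaf. That is exactly where your collapsed-strip difficulty reappears. So if you follow this route, you should supply that step explicitly.
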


\begin{proof}[Proof of Lemma \ref{5 lem Finvariant}]
	By definition, $F(\bar{\mathbf{F}})\subset \bar{\mathbf{F}}$ and 
	$\mathbf{F}\setminus F({\mathbf{F}})= \mcf^u_F(x_0)+\ZZ^2=\mathbf{F}_0$.
	Note that there exist $n_i\in\ZZ^2 \ (1\leq i\leq l:={\rm deg}(A))$ such that 
	$\ZZ^2= \bigcup_{1\leq i\leq l}(n_i+A\ZZ^2)$, and hence
	\begin{align}
    \mathbf{F}_0=\bigcup_{1\leq i\leq l} \big(\mcf^u_F(x_0+n_i)+A\ZZ^2\big). \label{eq. 5. F0}
	\end{align}
	Moreover, by \eqref{eq. prop 3.4 kn},
	$$\bigcup_{n\in\ZZ^2} \mcf^u_F(F(x_0)+An)=\mcf^u_F\big(F(x_0)\big)+A\ZZ^2\subset  F({\mathbf{F}}).$$
	To prove $\bar{\mathbf{F}}$ being $F$-invariant, 
	by \eqref{eq. 5. F0},
	it suffices to show that for each $1\leq i\leq l$, 
	\begin{align}
		\overline{\mcf^u_F(x_0+n_i)+A\ZZ^2} \subset \overline{\mcf^u_F\big(F(x_0)\big)+A\ZZ^2}.  
		\label{eq. 6. inclusion}
	\end{align}
	
\begin{claim}\label{5 claim gamma inclusion}
	Let $\Gamma=\big\{x\in\RR^2\ |\ H^{-1}\circ H(x)=\{x\} \big\}$ be the set of $H$-injection points.
	Then 
	\[
	\bar{\Gamma}\subset \overline{\bigcup_{n\in\ZZ^2}\mcf^u_F(y+A^kn)}, 
	\quad \forall\, y\in\RR^2, k\in\NN.  
	\]
\end{claim}
\begin{proof}[Proof of Claim \ref{5 claim gamma inclusion}]
	The proof is the same as Claim \ref{3 claim closure}. 
	Replace \eqref{3 eq. closure} by the following:  
	the set
	\[
	H\big(\bigcup_{n\in\ZZ^2}\mcf^u_F(y+A^kn) \big)
	=\bigcup_{n\in\ZZ^2}\tildeL^u\big(H(y+A^kn)\big)
	\]
	is dense in $\RR^2$. 
	And this is just a corollary of 
	Proposition \ref{2 prop semiconj on stable} and Proposition \ref{2 prop minimal foliation}.
\end{proof}

	By Claim \ref{5 claim gamma inclusion} and \eqref{eq. prop 3.4 kn}, 
	for every $1\leq i\leq l$,
	$$\emptyset \neq \bar{\Gamma}\subset \overline{\mcf^u_F(x_0+n_i)+A\ZZ^2} \ \bigcap\  \overline{\mcf^u_F\big(F(x_0)\big)+A\ZZ^2}.$$
	Let $x\in \bar{\Gamma}$. There are  sequences $\{m^1_k\}_{k\in\NN}$ and $\{m^2_k\}_{k\in\NN}$ with $m^j_k\in A\ZZ^2$ for all $k\in\NN$ and $j=1,2$ such that 
	\[ \mcf^u_F(x_0+n_i)+m^1_k\to \mcf^u_F(x) \quad {\rm and} \quad \mcf^u_F\big(F(x_0)\big)+m^2_k\to \mcf^u_F(x),\]
	as $k\to+\infty$.
	It follows that 
	$ \mcf^u_F\big(F(x_0)\big)+m^2_k-m^1_k\to  \mcf^u_F(x_0+n_i)$.
	Since $(m^2_k-m^1_k)\in A\ZZ^2$, 
	$$\mcf^u_F(x_0+n_i)\subset  \overline{\mcf^u_F\big(F(x_0)\big)+A\ZZ^2}.$$
	Hence this proves \eqref{eq. 6. inclusion}. 
 
	To prove the $F$-invariance of $\bar{\mathbf{F}}_0$, we show $\bar{\mathbf{F}}_0=\bar{\mathbf{F}}$. 
	By Claim \ref{5 claim gamma inclusion} and above argument, 
	we have  $\bar{\mathbf{F}}_k\subset \bar{\mathbf{F}}_0$ for every $k\in\NN$.  
	Indeed, let $x\in \bar{\Gamma}\subset \bar{\mathbf{F}}_k\cap \bar{\mathbf{F}}_0$. 
	There are $n^1_j\in\ZZ^2$ and $n^2_j\in A^k\ZZ^2$ such that  as $j\to+\infty$,
	\[ \mcf^u_F(x_0)+n^1_j\to \mcf^u_F(x) \quad {\rm and} \quad \mcf^u_F\big(F^k(x_0)\big)+n^2_j\to \mcf^u_F(x).\]
	Hence $\mcf^u_F(x_0)+n^1_j-n^2_j \to  \mcf^u_F\big(F^k(x_0)\big)$ and $\bar{\mathbf{F}}_k\subset \bar{\mathbf{F}}_0$.
	Since $\mathbf{F}=\cup_{k\in\NN}\mathbf{F}_k$ and $\bar{\mathbf{F}}_k\subset \bar{\mathbf{F}}_0$, 
	we have $\bar{\mathbf{F}}=\bar{\mathbf{F}}_0$.
	
	This completes the proof of Lemma \ref{5 lem Finvariant}.
\end{proof}
	
	For any $k\in\NN$ and $n\in\ZZ^2$, define  
	\[ H_{k,n}:\mcf_F^u\big(F^k(x_0)+A^kn\big) \to \tildeL^u\big(A^k\circ H(x_0)+A^kn\big)\] 
	by $H_{k,n}(x)=H(x-A^kn)+A^kn$.
	By \eqref{eq. prop 3.4 kn}, the map $H_{k,n}$ is well-defined.  
	
\begin{claim}\label{5 claim barH}
	If $x\in \mcf_F^u\big(F^{k_1}(x_0)+A^{k_1}n_1\big) \cap \mcf_F^u\big(F^{k_2}(x_0)+A^{k_2}n_2\big) $ 
	for some $k_1\geq k_2\geq 0$ and $n_1, n_2\in\ZZ^2$ with $k_1\neq k_2$ or $n_1\neq n_2$, 
	then $H_{k_1,n_1}(x)=H_{k_2,n_2}(x)$.
\end{claim}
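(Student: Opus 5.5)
The idea is to compare the two candidate values through the linear map $A$ and the stable-direction estimate in Proposition \ref{2 prop semiconj on stable}. Two points in the same leaf of $\tildeL^u$ that also lie in the same leaf of $\tildeL^s$ must coincide, by the global product structure of the linear foliations. So it suffices to show that $H_{k_1,n_1}(x)$ and $H_{k_2,n_2}(x)$ lie in a common unstable leaf of $A$ and in a common stable leaf of $A$.

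\emph{Common unstable leaf.} By \eqref{eq. prop 3.4 kn}, the leaf $\mcf_F^u(F^{k_1}(x_0)+A^{k_1}n_1)$ equals $\mcf^u_F(F^{k_1}(x_0))+A^{k_1}n_1$, and by hypothesis it coincides with $\mcf_F^u(F^{k_2}(x_0)+A^{k_2}n_2)$. This gives a single leaf $L$ containing $x$. The map $H$ sends $\mcf^u_F$-leaves to $\tildeL^u$-leaves, since $H$ semi-conjugates $F$ to $A$ (the $g=A$ case of Proposition \ref{2 prop foliation on R2}). Hence $H_{k_j,n_j}(L)=H(L-A^{k_j}n_j)+A^{k_j}n_j$ is the $\tildeL^u$-leaf through $A^{k_j}H(x_0)+A^{k_j}n_j$, and it is also the leaf $\tildeL^u(H(x))+\big(H(x-A^{k_j}n_j)+A^{k_j}n_j-H(x)\big)$. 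Since $\tildeL^u$ is a linear foliation, its translates are again leaves. I would instead show directly that both images equal $\tildeL^u(H(x))$. This amounts to checking that the translation vector $H(y-m)+m-H(y)$ lies in $\widetilde L^s$ by item (1) of Proposition \ref{2 prop semiconj on stable}, and that the leaf $L-m$ is an $\mcf^u_F$-leaf.

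\emph{Common stable leaf.} By item (1) of Proposition \ref{2 prop semiconj on stable}, $H(x-m)+m-H(x)\in\widetilde L^s$ for every $m\in\ZZ^2$, so both $H_{k_j,n_j}(x)$ lie in $\tildeL^s(H(x))$. Combining this with the unstable statement, each $H_{k_j,n_j}(x)$ lies in $\tildeL^s(H(x))$ and in the unstable leaf $H_{k_j,n_j}(L)$.

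\emph{Identifying the two unstable leaves.} The leaf $H_{k_j,n_j}(L)$ is $\tildeL^u(A^{k_j}(H(x_0)+n_j))$. I would push the second description forward by $A^{k_1-k_2}$ so that both are expressed at the same level $k_1$. Using $H\circ F=A\circ H$ and \eqref{eq. prop 3.4 kn}, I would show that the leaves $\tildeL^u(A^{k_1}(H(x_0)+n_1))$ and $\tildeL^u(A^{k_2}(H(x_0)+n_2))$ coincide. The tool is that $H(F^k(x_0)+A^kn)-A^kn-A^k H(x_0)$ lies in the stable direction and is of size $\le 2C\|A|_{L^s}\|^k$ by item (2). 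Iterating backwards, the leaf equality $L=\mcf^u_F(F^{k}(x_0))+A^{k}n$ at level $k_1$ pulls back under $F^{-k_2}$ to an equality of translates of $\mcf^u_F(F^{k_1-k_2}(x_0))$ and $\mcf^u_F(x_0)$ differing by $A^{k_1-k_2}n_1-n_2$. I expect this step to be the main obstacle: showing that the \emph{stable} offsets of $H$ at different levels $k_1\neq k_2$ agree, rather than merely being small.

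The strategy I would try first is to apply $F^{m}$ for large $m$. Both $H_{k_j,n_j}$ intertwine $F$ and $A$ in the sense that $A\circ H_{k,n}=H_{k+1,n}\circ F$ on the relevant leaves. The difference $H_{k_1,n_1}(x)-H_{k_2,n_2}(x)$ is a stable vector, and under the shift $k\mapsto k+m$ its norm is bounded by $4C\|A|_{L^s}\|^{k_2+m}$ by item (2). At the same time it is multiplied exactly by $A^m|_{L^s}$, so its norm is at least $\mu_s^m$ times the original. Comparing these, the factor $\|A|_{L^s}\|^{k_2}$ forces the difference to vanish, which yields $H_{k_1,n_1}(x)=H_{k_2,n_2}(x)$.
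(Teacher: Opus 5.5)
\textbf{The decisive step fails.} Your overall frame is the paper's: show that $H_{k_1,n_1}(x)$ and $H_{k_2,n_2}(x)$ lie on a common leaf of $\tildeL^s$ and a common leaf of $\tildeL^u$. Your stable-leaf step via item (1) of Proposition \ref{2 prop semiconj on stable} is correct, and so is the intertwining $A\circ H_{k,n}=H_{k+1,n}\circ F$. But the unstable-leaf coincidence, which you flag as the main obstacle, is never established, and the argument you propose for it does not work.

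Put $v=H_{k_1,n_1}(x)-H_{k_2,n_2}(x)\in\widetilde L^s$ and $\mu_s=\|A|_{L^s}\|$. Your two estimates combine to $\mu_s^m|v|=|A^mv|\le 4C\mu_s^{k_2+m}$, that is, $|v|\le 4C\mu_s^{k_2}$. The factor $\mu_s^m$ cancels, and $k_2$ is fixed by the hypothesis on $x$, so nothing forces $v=0$. This cannot be repaired within your approach. Item (2) is just the $A^k$-image of the trivial bound $|H(y-n)+n-H(y)|<2C$ that comes from $\|H-{\rm Id}\|<C$. Pushing forward by $F^m$ therefore never yields more than what is already known at level $k_2$.

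Your side plan to show that both images equal $\tildeL^u(H(x))$ is also wrong. Translating a $\tildeL^u$-leaf by a nonzero vector of $\widetilde L^s$ gives a different leaf. Moreover, that equality would force $H(x-A^{k_j}n_j)+A^{k_j}n_j=H(x)$, which is exactly the deck-commutativity the appendix is trying to prove.

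\textbf{The missing idea.} You need an exact input, obtained by iterating a translation rather than $F$.
\begin{itemize}
\item As you note, pulling the hypothesis back by $F^{-k_2}$ gives $\mcf^u_F(F^{k_1-k_2}(x_0))+A^{k_1-k_2}n_1=\mcf^u_F(x_0)+n_2$. Hence $\pi(\mcf^u_F(x_0))$ is a periodic leaf. The paper then takes $x_0$ fixed, which reduces to $k_1=k_2=0$ with $n_1\neq n_2$.
\item The hypothesis now says $\mcf^u_F(x_0)+(n_1-n_2)=\mcf^u_F(x_0)$. Hence $x_0+l(n_1-n_2)\in\mcf^u_F(x_0)$ for every $l\in\ZZ$.
\item Since $H(\mcf^u_F(x_0))=\tildeL^u(H(x_0))$ and $\|H-{\rm Id}\|<C$, the points $H(x_0)+l(n_1-n_2)$ stay within $2C$ of the straight line $\tildeL^u(H(x_0))$ for all $l$. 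Their distance to it grows linearly in $|l|$ unless it is zero, so $H(x_0)+n_1-n_2\in\tildeL^u(H(x_0))$.
\item Consequently $H_{0,n_2}(x)\in\tildeL^u(H(x_0))+n_2=\tildeL^u(H(x_0)+n_1)=\tildeL^u(H_{0,n_1}(x))$. Intersecting with $\tildeL^s(H(x))$ gives $H_{0,n_1}(x)=H_{0,n_2}(x)$.
\end{itemize}
This works because, along the translation iteration, the error $2C$ stays bounded while the displacement grows. Your forward iteration under $F$ lacks exactly this: there the error and $v$ scale at the same rate.
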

\begin{proof}[Proof of Claim \ref{5 claim barH}]
	Note that the assumption of this claim can only happen in the case of fixed point $x_0$. 
	Indeed, if $\mcf_F^u\big(F^{k_1}(x_0)+A^{k_1}n_1\big) \cap \mcf_F^u\big(F^{k_2}(x_0)+A^{k_2}n_2\big)\neq \emptyset$, 
	by \eqref{eq. prop 3.4 kn},   
	$x_0+n_2-A^{k_1-k_2}n_1\in \mcf^u_F\big(F^{k_1-k_2}(x_0) \big)$. 
	Then  $\pi (\mcf^u_F(x_0))$ is an $f$-periodic leaf. 
	By the assumption above, $x_0$ is an $F$-fixed point.  
	
	In the case that $x_0$ is a fixed point and $f$ is special, 
	Claim \ref{5 claim barH} has been proved in \cite[Lemma 3.1]{GX2023}. 
	Here we replace the special property of $f$ by the one of a leaf $\mcf^u_F(x_0)$ 
	and list the key points of the proof. 	
	Since $x_0$ is fixed, we consider the case that $k_1=k_2=0$. 
	Let $x\in \mcf^u_F(x_0+n_1)\cap \mcf^u_F(x_0+n_2)$. 
	Then 
	\begin{enumerate}[label=(\roman*)]
		\item $H_{0,n_1}(x), H_{0,n_2}(x)\in \tildeL^s(H(x))$; 
		See the first item of Proposition \ref{2 prop semiconj on stable}.
		
		\item  $x_0+l(n_1-n_2)\in \mcf^u_F(x_0)$, for all $l\in \ZZ$. 
		This follows from $\mcf^u_F(x_0+n)=\mcf^u_F(x_0)+n$ for all $n\in\ZZ^2$.
		
		\item $H(x_0)+n_1-n_2\in \tildeL^u(H(x_0))$. 
		This follows from the item (ii). 
		Indeed, by $\|H-{\rm Id}_{\RR^2}\|_{C^0}<C_0$ and $x_0+l(n_1-n_2)\in \mcf^u_F(x_0)$,  
		$$H(x_0)+l(n_1-n_2)\in B_{2C_0}\big(\tildeL^u(H(x_0))\big).$$ 
		If $H(x_0)+n_1-n_2\notin \tildeL^u(H(x_0))$, 
		then $d\big(H(x_0)+l(n_1-n_2),  \tildeL^u(H(x_0) \big)$ is larger than $2C_0$ 
		by taking $l$ large enough. 
		It is a contradiction.
	\end{enumerate}
    Now we have 
    \begin{align*}
    	H_{0,n_2}(x)=H(x-n_2)+n_2 & \in H\big( \mcf^u_F (x_0+n_1-n_2) \big)+n_2,
    	\quad (x\in \mcf^u_F(x_0+n_1))\\
    	&=H\big( \mcf^u_F (x_0) \big)+n_2=\tildeL^u(H(x_0))+n_2, 
    	\quad (\text{the item (ii)})\\
    	& =\tildeL^u\big(H(x_0)+n_1-n_2\big)+n_2, 
    	\quad (\text{the item (iii)})\\
    	& = \tildeL^u\big( H(x_0)+n_1  \big)=\tildeL^u\big( H_{0,n_1}(x) \big). 
    \end{align*}
It follows from item (i) that $H_{0,n_2}(x)=\tildeL^u\big( H_{0,n_1}(x) \big)\cap \tildeL^s\big(H(x)\big)=H_{0,n_1}(x)$.
\end{proof}

By Claim \ref{5 claim barH}, the map
\[ 
\overline{H}:	\bigcup_{k\in\NN, n\in\ZZ^2} \mcf^u_F\big(F^k(x_0)+A^kn\big) \to \bigcup_{k\in\NN, n\in\ZZ^2}\tildeL^u\big(A^k\circ H(x_0)+A^kn\big)
\] 
given by $\overline{H}(x)=H_{k,n}(x)$, for any $x\in \mcf^u_F\big(F^k(x_0)+A^kn\big)$, is well-defined.

\begin{lem}\label{5 lem H property}
	The map $\overline{H}$ (defined on $\mathbf{F}$) satisfies:
	\begin{enumerate}
		\item There exists $C>0$ such that $\| \overline{H}-{\rm Id}_{\mathbf{F}} \|_{C^0}<C$;
		\item $\overline{H}\circ F(x)= A\circ \overline{H}(x)$, for all $x\in\mathbf{F}$;
		\item $\overline{H}(x+n)= \overline{H}(x)+n$, for all $x\in \mathbf{F}_0$ and $n\in\ZZ^2$;
		\item $\overline{H}$ is uniformly continuous on $\mathbf{F}_0$; 
		\item  $\overline{H}$ uniquely  continuously extends  to $\bar{\mathbf{F}}_0$, 
			moreover $\overline{H}|_{\bar{\mathbf{F}}_0}=H|_{\bar{\mathbf{F}}_0}$.
	\end{enumerate}
\end{lem}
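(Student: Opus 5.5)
I will verify the five items in order, working leaf by leaf on $\mathbf{F}=\bigcup_{k,n}\mcf^u_F\big(F^k(x_0)+A^kn\big)$ and using the definition $\overline{H}(x)=H(x-A^kn)+A^kn$ on $\mcf^u_F\big(F^k(x_0)+A^kn\big)$. Claim \ref{5 claim barH} guarantees that this is well defined.

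\textbf{Items (1) and (2).} For (1), note $\overline{H}(x)-x=H(x-A^kn)-(x-A^kn)$. Proposition \ref{2 prop semi-conj in R2} bounds this by the constant $C$, uniformly in $k$ and $n$. For (2), take $x\in\mcf^u_F\big(F^k(x_0)+A^kn\big)$. By \eqref{eq. prop 3.4 kn} we have $F(x)\in\mcf^u_F\big(F^{k+1}(x_0)+A^{k+1}n\big)$. Moreover $F(x)-A^{k+1}n=F(x-A^kn)$, since $F(y+m)=F(y)+Am$ for $m\in\ZZ^2$. Therefore
\[
\overline{H}(F(x))=H\big(F(x-A^kn)\big)+A^{k+1}n=A\big(H(x-A^kn)+A^kn\big)=A\,\overline{H}(x).
\]

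\textbf{Item (3).} Restrict to $\mathbf{F}_0$, so $k=0$. If $x\in\mcf^u_F(x_0+m)$, then $x+n\in\mcf^u_F(x_0+m+n)$ by the hypothesis on $x_0$. Hence
\[
\overline{H}(x+n)=H(x-m)+m+n=\overline{H}(x)+n.
\]

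\textbf{Item (4): the main obstacle.} Uniform continuity on $\mathbf{F}_0$ is where I expect the real difficulty. Two nearby points of $\mathbf{F}_0$ may lie on different leaves $\mcf^u_F(x_0+m_1)$ and $\mcf^u_F(x_0+m_2)$ with $|m_1-m_2|$ huge, so the translates $x-m_1$ and $y-m_2$ are far apart. My plan has three steps.
\begin{itemize}
\item \emph{Reduce to one leaf.} Write $\overline{H}(x)=H(x-m_1)+m_1$ and $\overline{H}(y)=H(y-m_2)+m_2$. Set $m=m_2-m_1$ and $y'=y-m_1$. Then $\overline{H}(y)-\overline{H}(x)=H(y'-m)+m-H(x-m_1)$, with $x-m_1$ and $y'$ close, and both $x-m_1$ and $y'-m$ lie in $\mcf^u_F(x_0)$.
\item \emph{Place the difference in a stable leaf.} By Proposition \ref{2 prop semiconj on stable}(1), $H(y'-m)+m-H(y')\in\tildeL^s\big(H(y')\big)$. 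By the argument of item (iii) in Claim \ref{5 claim barH}, it also lies in $\tildeL^u\big(H(x_0)\big)+m$, which equals $H(\mcf^u_F(x_0+m))=H(\mcf^u_F(y'))$. Now use the uniform continuity of $H$ and the global product structure of $\tildeL^s,\tildeL^u$. The point $\overline{H}(y)$ is the intersection of $\tildeL^s(H(y'))$ with the $\tildeL^u$-leaf through a point of $\tildeL^u\big(H(x_0)+m\big)$. I then want to show this intersection is close to $H(y')$, which is itself close to $H(x-m_1)$.
\item \emph{Exploit the constraint from the bound.} Simply pinning it down via intersections is circular. The intended fix uses item (1): $\overline{H}(y)$ is within $C$ of $y$. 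So it is the unique point of $\tildeL^s(H(y'))$ within bounded distance of $y$ that lies on $H(\mcf^u_F(y))$. Unstable leaves of $A$ through $C$-close points, when they meet a short stable arc, meet at nearby points uniformly. The issue is that the relevant $\tildeL^u$-leaf might differ from the one through $H(x)$. I would try to show the two $\tildeL^u$-leaves must be close using the continuity of $\mcf^u_F$ in $C^1$ on compacts together with uniform continuity of $H$. If this fails, I would instead argue by contradiction with a compactness and subsequence argument via item (3), translating back to a fundamental domain.
\end{itemize}

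\textbf{Item (5).} The extension follows from (4). A uniformly continuous map on $\mathbf{F}_0$ extends uniquely to $\bar{\mathbf{F}}_0$. To identify it with $H$, observe that on the dense-in-$\bar\Gamma$ points the extension satisfies (1) and (2) on the $F$-invariant set $\bar{\mathbf{F}}_0$ (Lemma \ref{5 lem Finvariant}). The map $G:=\overline{H}-H$ is therefore bounded and satisfies $G\circ F=A\circ G$ on $\bar{\mathbf{F}}_0$. Decompose $G$ along $L^s\oplus L^u$:
\begin{itemize}
\item the $L^u$-component vanishes by iterating forward, since it would grow without bound;
\item the $L^s$-component vanishes by iterating backward along preimages inside $\bar{\mathbf{F}}_0$, which exist because $F$ is a diffeomorphism and $\bar{\mathbf{F}}_0$ is invariant;
\item alternatively, since both $\overline{H}$ and $H$ map $\mcf^u_F$-leaves into the same $\tildeL^u$-leaf, $G(x)\in L^u$ directly, and forward iteration alone concludes.
\end{itemize}
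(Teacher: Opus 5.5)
Items (1)--(3) are correct and coincide with the paper's computations. The first two bullets of your item (5) are also the paper's argument. The gap is in item (4), which carries the lemma.

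\textbf{The false step in (4).} You claim that $H(y'-m)+m$ lies in $\tildeL^u(H(x_0))+m$, ``which equals $H(\mcf^u_F(x_0+m))$''. The equality is false in general. Indeed $H(\mcf^u_F(x_0+m))=\tildeL^u(H(x_0+m))$, and by Proposition~\ref{2 prop semiconj on stable}(1) we have $H(x_0+m)-m\in\tildeL^s(H(x_0))$. So the two lines coincide if and only if $H(x_0+m)=H(x_0)+m$. That is exactly the deck-commutativity this appendix is trying to prove; if it held for all $m$, then $\overline H=H$ on $\mathbf F_0$ and (4) would be trivial.

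The item (iii) argument of Claim~\ref{5 claim barH} gives this only when $x_0+m\in\mcf^u_F(x_0)$, i.e.\ when the leaf is exactly invariant under translation by $m$. Your third step repeats the same conflation: $\overline H(y)$ lies on $\tildeL^u(H(x_0))+m_2$, not on $H(\mcf^u_F(y))$.

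\textbf{Why the fallbacks fail.} After reducing to $x,y$ on one center leaf, $d(\overline H(x),\overline H(y))$ is governed by the distance between the parallel lines $\tildeL^u(H(x_0))$ and $\tildeL^u(H(x_0))+m$. That is the $\widetilde L^s$-component of $m=m_2-m_1$, and $m$ is unbounded. Local $C^1$-continuity of $\mcf^u_F$ and uniform continuity of $H$ do not see $m$ at all. Translating to a fundamental domain via item (3) does not bound $m$ either, so no subsequence stabilizes.

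\textbf{The missing idea} is a quantitative version of that item (iii) scaling argument, which is what the paper does. Suppose $x_t\in\mcf^u_F(x_0+n_t)$, $y_t\in\mcf^u_F(x_0+m_t)\cap\mcf^c_F(x_t)$, $d(x_t,y_t)<\delta_t\to 0$, but $d(\overline H(x_t),\overline H(y_t))\geq\e_0$.
\begin{enumerate}
\item Since $\overline H$ sends center leaves into $\tildeL^s$-leaves, the lines $\tildeL^u(H(x_0))+(n_t-m_t)$ and $\tildeL^u(H(x_0))$ are at distance at least some $\eta_0(\e_0)>0$.
\item By linearity, $\tildeL^u(H(x_0))+l(n_t-m_t)$ is then at distance at least $l\eta_0>3C$ from $\tildeL^u(H(x_0))$, for a fixed $l$.
\item On the other side, $\delta_t$-closeness of $\mcf^u_F(x_0)$ and $\mcf^u_F(x_0)+(n_t-m_t)$ propagates to $l\delta_t$-closeness of $\mcf^u_F(x_0)$ and $\mcf^u_F(x_0)+l(n_t-m_t)$ (see \cite[Formula (3.4)]{GX2023}).
\item The bound $\|H-{\rm Id}_{\RR^2}\|_{C^0}<C$ then bounds the distance between the two $A$-lines by $l\delta_t+2C<3C$, a contradiction.
\end{enumerate}
Using multiples of $m$ on both sides, together with the uniform $C^0$ bound, is what converts local closeness into smallness of the $\widetilde L^s$-component of $m$.

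\textbf{Item (5).} Drop the ``alternative'' third bullet. Since $\overline H(x)\in\tildeL^s(H(x))$, the difference $\overline H-H$ points along $\widetilde L^s$, not $\widetilde L^u$. The claim that both maps send a $\mcf^u_F$-leaf to the same $\tildeL^u$-leaf is the same circular statement as above. Forward iteration gives nothing here. Backward iteration inside the $F^{\pm1}$-invariant set $\bar{\mathbf F}_0$ is essential, and that requires (4).
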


\begin{proof}[Proof of Lemma \ref{5 lem H property}]
	
	This lemma is adapted from \cite[Lemma 3.3]{GX2023} 
	where the authors proved it in the case that $x_0$ is a fixed point.  
	For completeness, we briefly prove this lemma here.  
	
	For the first two items, 
	let $x\in \mcf^u_F\big(F^k(x_0)+A^kn\big)$.  
	Then $\overline{H}(x)=H_{k,n}(x)=H(x-A^kn)+A^kn$. Hence 
	$d\big(\overline{H}(x),x\big)=d\big( H(x-A^kn), x-A^kn\big)<C$,
	where $C$ is given by Proposition \ref{2 prop semi-conj in R2}.
	So the first item holds. 
	Moreover,
	$F(x)\in \mcf_F^u\big(F\big(F^{k}(x_0)+A^{k}n\big)\big)=\mcf_F^u\big(F^{k+1}(x_0)+A^{k+1}n\big)$,
	and 
	\begin{align*}
		\overline{H}\circ F(x)=H_{k+1,n}(F(x))&=H\big(F(x)-A^{k+1}n   \big) +A^{k+1}n
		=H\circ F(x-A^kn)+A^{k+1}n\\&=A\circ H(x-A^kn)+A^{k+1}n
		=A\circ (H(x-A^kn)+A^kn)=A\circ\overline{H}(x).
	\end{align*}
	Hence the second item holds. 
	
	For the third item, let $x\in \mathbf{F}_0$, namely, 
	there exists $n_x\in\ZZ^2$ such that $x\in\mcf^u_F(x_0+n_x)$.  
	For any $n\in\ZZ^2$, by the choice of $x_0$,
	$x+n\in \mcf^u_F(x_0+n_x)+n=\mcf^u_F(x_0+n_x+n)$.
	Hence
	\[
	\overline{H}(x+n) = H\big(x+n-(n_x+n)\big)+(n_x+n)
	=\big(H(x-n_x)+n_x\big)+n=\overline{H}(x)+n.
	\]
	
	Then we prove the forth item. The proof is the same as \cite{GX2023}.
	By the definition and the uniform continuity of $H$ on $\RR^2$, 
	$\overline{H}$ is uniformly continuous along each leaf of $\mcf^u_F(x)\subset \mathbf{F}_0$.   
	Then, it suffices to prove the uniform continuity of $\overline{H}|_{\mathbf{F}_0}$ along each leaf of $\mcf^c_F$. 
	By contradiction, we assume that 
	there exist $\e_0>0$, $\delta_t> 0$ with $\delta_t\to 0$, $n_t\in\ZZ^2$, $m_t\in\ZZ^2$ 
	and points $x_t\in \mcf^u_F(x_0+n_t)$, $y_t\in \mcf^u_F(x_0+m_t) \cap \mcf^c_F(x_t)$ 
	such that
	$d(x_t,y_t)<\delta_t$ and $d\big(\overline{H}(x_t),\overline{H}(y_t)\big)\geq\e_0$.
	For given $\e_0>0$, there exists $\eta_0$ such that 
	if $d(x,y)\geq\e_0$ and $y\in \tildeL^s(x)$, 
	then $d\big( \tildeL^u(x),\tildeL^u(y) \big)\geq \eta_0$.
	Let $C>0$ be taken as above, i.e., $\|H-{\rm Id}_{\RR^2}\|_{C^0}<C$.
	For this $\eta_0$ and $C$, let $l\in\NN$ such that $l\eta_0>3C$. 
	Then take $\delta_t\to 0$ small enough such that $l\delta_t<C$. 
	By the first item of Proposition \ref{2 prop semiconj on stable},  
	$\overline{H}$ maps $\mcf^c_F$ to $\tildeL^s$. 
	Since $d\big(\overline{H}(x_t),\overline{H}(y_t)\big)\geq\e_0$, 
	we have
	\[ d\big( \tildeL^u(H(x_0)) +n_t-m_t\ ,\ \tildeL^u(H(x_0))  \big) =
	d\big( \tildeL^u(H(x_0)+n_t)\ ,\ \tildeL^u(H(x_0)+m_t)      \big)
	\geq \eta_0 .\]
	Hence, for $l\in\NN$ given as above, 
	\begin{align}
		d\big(\tildeL^u(H(x_0)) +l(n_t-m_t)\ ,\ \tildeL^u(H(x_0))     \big) \geq l\eta_0>3C . 
		\label{eq. 3. to infinity}
	\end{align}
	On the other hand, since $d(x_t,y_t)<\delta_t$, we have 
	\begin{align}
			d\big( \mcf^u_F(x_0+n_t)\ ,\ \mcf^u_F(x_0+m_t)   \big) 
		=  d\big( \mcf^u_F(x_t)\ ,\ \mcf^u_F(y_t)  \big) <\delta_t. \label{eq. A.1}
	\end{align}
  This deduces that	for $l\in\NN$ given above, 
  \begin{align}
  		d\big( \mcf^u_F(x_0)+l(n_t-m_t),\mcf^u_F(x_0)  \big)  <l\delta_t. \label{eq. A.2} 
  \end{align}
See \cite[Formula(3.4)]{GX2023} for  a complete proof of \eqref{eq. A.1} implying \eqref{eq. A.2}. 
	Since $\|H-{\rm Id}_{\RR^2}\|_{C^0}<C$,  we have
	\begin{align}
		d\big( \tildeL^u(H(x_0))+l(n_t-m_t)\ ,\ \tildeL^u(H(x_0))  \big)  <l\delta_t+2C<3C.
		\label{eq. 3. to small 2}
	\end{align}
	Then, \eqref{eq. 3. to small 2} contradicts \eqref{eq. 3. to infinity}.  
	
	Finally, we prove the fifth item.   
	By the forth item, the map $\overline{H}$ can be extended to $\bar{\mathbf{F}}_0$. 
	By the first item, we can assume that $\|H-{\rm Id}_{\RR^2}\|_{C^0}\leq C$ 
	and $\|\overline{H}-{\rm Id}_{\bar{\mathbf{F}}_0}\|_{C^0}\leq C$. 
	By the second item,
	$\overline{H}\circ F(x)= A\circ \overline{H}(x)$, for all $x\in \bar{\mathbf{F}}_0$.
	Recall that $\bar{\mathbf{F}}_0$ is $F^{\pm}$-invariant (Lemma \ref{5 lem Finvariant}),
	hence for every $x\in\bar{\mathbf{F}}_0$ and $k\in\ZZ$,
	\begin{align}
		d\big(A^k\circ H(x),A^k\circ\overline{H}(x)\big)= d\big(H\circ F^k(x),
		\overline{H}\circ F^k(x)\big) \leq 2C,
		\label{eq. 3. barH 1}
	\end{align}
	It follows from \eqref{eq. 3. barH 1} that $H(x)=\overline{H}(x)$ for every $x\in\bar\Gamma$.
	
	This ends the proof of Lemma \ref{5 lem H property}.
\end{proof}

By Claim \ref{5 claim gamma inclusion}, 
the closure of the set of  $H$-injection points has  $\bar{\Gamma}\subset \bar{\mathbf{F}}_0$. 
Then applying Lemma \ref{5 lem H property}, 
one can get the following property whose proof is exactly the same as \cite[Proposition 3.6]{GX2023}.

\begin{lem}\label{5 lem semi-h}
The set $\Gamma$ satisfies that $\bar{\Gamma}+\ZZ^2=\bar{\Gamma}$ and
	 $H(x+n)=H(x)+n$, for any $x\in \bar{\Gamma}$ and $ n\in\ZZ^2$.
\end{lem}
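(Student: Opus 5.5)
The plan has two steps. First prove the commutation identity $H(x+n)=H(x)+n$ for $x\in\bar\Gamma$. Then deduce $\bar\Gamma+\ZZ^2=\bar\Gamma$ from it, exactly as in the final paragraph of the proof of Proposition \ref{2 prop foliation on R2}.

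\textbf{Step 1: the commutation identity.} By Claim \ref{5 claim gamma inclusion} (with $k=0$, $y=x_0$) we have $\bar\Gamma\subset\bar{\mathbf{F}}_0$. By Lemma \ref{5 lem H property}(5), $\overline{H}=H$ on $\bar{\mathbf{F}}_0$.

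Before using this, I will recheck that the last line of that proof really gives the equality on all of $\bar{\mathbf F}_0$, not only on $\bar\Gamma$. The argument is as follows. Estimate \eqref{eq. 3. barH 1} says that $H(x)$ and $\overline{H}(x)$ have $A^k$-orbits staying $2C$-close for all $k\in\ZZ$. Since $A$ is hyperbolic on $\RR^2$, this forces $H(x)=\overline{H}(x)$ for every $x\in\bar{\mathbf{F}}_0$. Here the $F^{-1}$-invariance of $\bar{\mathbf F}_0$ (Lemma \ref{5 lem Finvariant}, with $F$ a diffeomorphism of $\RR^2$) is what allows negative $k$.

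Next, $\mathbf F_0$ is $\ZZ^2$-invariant by construction, so $\bar{\mathbf{F}}_0+\ZZ^2=\bar{\mathbf{F}}_0$. By Lemma \ref{5 lem H property}(3),(4) and continuity, $\overline{H}(x+n)=\overline{H}(x)+n$ holds on the closure $\bar{\mathbf F}_0$. Hence for $x\in\bar\Gamma\subset\bar{\mathbf F}_0$ and $n\in\ZZ^2$,
\[
H(x+n)=\overline{H}(x+n)=\overline{H}(x)+n=H(x)+n .
\]
In fact this holds for all $x\in\bar{\mathbf F}_0$.

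\textbf{Step 2: $\ZZ^2$-invariance of $\bar\Gamma$.} It suffices to show $\Gamma+\ZZ^2\subset\Gamma$. Take $x\in\Gamma$, $n\in\ZZ^2$, and $z\in H^{-1}(H(x+n))$.

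If I knew $z-n\in\bar{\mathbf F}_0$, I could conclude directly: applying Step 1 with $-n$ gives $H(z-n)=H(z)-n=H(x+n)-n=H(x)$, so $z-n=x$ by injectivity at $x$, i.e.\ $z=x+n$. But $z$ is arbitrary, so this hypothesis is not available.

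Instead I will use the fibre structure. By Proposition \ref{2 prop foliation on R2}, $H^{-1}(H(x+n))$ is either a point or a compact center segment $I$ containing $x+n$, and the endpoints of $I$ lie in $\bar\Gamma$. These endpoints are limits of injection points, because $H$ is not locally constant there.

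Suppose $I$ is nontrivial. Its endpoints $a,b$ lie in $\bar\Gamma\subset\bar{\mathbf F}_0$, so by Step 1 $H(a-n)=H(a)-n=H(x)$, and likewise for $b$. Since $x\in\Gamma$, this gives $a-n=x=b-n$, contradicting $a\neq b$. Hence the fibre is a single point and $x+n\in\Gamma$.

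\textbf{Main obstacle.} The delicate point is the claim that the endpoints of a nontrivial fibre lie in $\bar\Gamma$. I would justify it using the monotonicity of $H$ on center leaves together with the statement in Proposition \ref{2 prop foliation on R2} that $H|_{\mcf^c_F}$ is not locally constant exactly at points of $\bar\Gamma$. Once that is settled, taking closures yields $\bar\Gamma+\ZZ^2=\bar\Gamma$.
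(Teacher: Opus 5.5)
Step 1 is exactly the route the paper indicates. The paper gets $\bar\Gamma\subset\bar{\mathbf F}_0$ from Claim \ref{5 claim gamma inclusion}, applies Lemma \ref{5 lem H property}, and defers the remaining details to \cite[Proposition 3.6]{GX2023}. Your recheck is sound: item (5) gives $\overline H=H$ on all of $\bar{\mathbf F}_0$, via two-sided bounded $A$-orbits and the $F^{\pm1}$-invariance of $\bar{\mathbf F}_0$. Extending item (3) to the closure is also fine.

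Step 2 also works. You correctly see that the argument at the end of the proof of Proposition \ref{2 prop foliation on R2} cannot be copied, because commutation at an arbitrary $z$ is unavailable. Testing commutation only at the endpoints $a,b$ of the fiber then gives $a-n=x=b-n$, as required.

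The one step that is not justified as you wrote it is ``the endpoints of a nondegenerate fiber lie in $\bar\Gamma$.'' Proposition \ref{2 prop foliation on R2} says only that $H|_{\mcf^c_F}$ is not locally constant at points of $\bar\Gamma$; it does not say ``exactly.'' You need the converse: that $H$ failing to be locally constant at $b$ forces $b\in\bar\Gamma$. This has to be proved. Here is one way.

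Suppose the endpoint $b$ of $I=[a,b]$ has a neighbourhood $U$ with $U\cap\Gamma=\emptyset$. Take a center arc $[b,c]\subset U\cap\mcf^c_F(b)$ extending past $b$. Since $c\notin I$, we have $H(c)\neq H(b)$, so $H([b,c])$ is a nondegenerate arc in a leaf of $\tildeL^s$.

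For each $p$ strictly between $H(b)$ and $H(c)$, the fiber $H^{-1}(p)$ is a connected subset of the leaf $\mcf^c_F(b)$ that meets $[b,c]$ but contains neither $b$ nor $c$. Hence it lies in $(b,c)\subset U$. It is therefore nondegenerate, since $U$ contains no injection points.

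This produces uncountably many pairwise disjoint nondegenerate subarcs of $[b,c]$, which is impossible. So $b\in\bar\Gamma$, and likewise $a\in\bar\Gamma$. With this inserted, your proof is complete.
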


By Lemma \ref{5 lem semi-h} and  $H(\bar{\Gamma})=\RR^2$, 
$H(x+n)=H(x)+n$ for $x\in \RR^2\setminus\bar{\Gamma}$ 
and hence (by Lemma \ref{5 lem semi-h} again) for $x\in\RR^2$, see \cite[Corollary 3.7]{GX2023}. This completes the proof of Proposition \ref{5 prop special to semi-conj}.
\end{proof}

\section{DA local diffeomorphisms with stable bundles}\label{sec sc DA}
\label{sec-app-b}

In this appendix, we consider the $s$-DA maps for completeness of the study on $\TT^2$.
Note that a DA local diffeomorphism with a stable bundle is called an $s$-DA map here.  We notice that the DA maps in Theorem \ref{main-thm-conjugacy} 
and the $s$-DA map have different rigidity phenomenon in non-invertible setting, 
see for example \cite{GX2023}. Nonetheless,
Proposition \ref{2 prop foliation on R2} also holds for an $s$-DA map $f:\mtt$ 
where one can replace $\mcf^u_F$ by $\mcf^s_F$, the stable foliation of the lift $F:\mrr$
(see \cite{GX2023}). 
Then, $F$ admits invariant splitting $E^s_F\oplus E^c_F$, and the center bundle $E^c_F$ is integrable.

First, we also have a dichotomy on the accessibility as Theorem \ref{3  thm dichotomy}. 

\begin{defn}\label{2 def c-accessible}\label{2 def c-acc}
	Let $f:\mtt$ be an $s$-DA map and $F:\mrr$ be a lift. 
	We say that $f$ is \textit{$c$-accessible}, 
	if for any $x,y\in \TT^2$, 
	there exist points $y_i \ (0\leq i\leq k)$ with $y_0=x, y_k=y$ and lifts $y_i'\in\RR^2$ of $y_i$ 
	such that  
	\[ y_{i+1}\in \pi\big(\mcf_F^c(y_i')\big), \quad \forall \ 0\leq i\leq k-1.\]
\end{defn}

The following result is adapted from Theorem \ref{3  thm dichotomy}. 
Different from the previous case of DA maps, 
its proof does not need an equivalent description of the special property 
stated in Theorem \ref{2 thm rigidity special}. 

\begin{prop}\label{7  prop dichotomy}
	Let $f:\mtt$ be a $C^1$-smooth  $s$-DA map. Then one has the following dichotomy:
	\begin{itemize}
		\item either $f$ is special.
		
		\item or $f$ is $c$-accessible. 
		Moreover, there exist constants  $R>0$ and $N\in\NN$ such that 
		any two points $x,y\in\TT^2$ can be connected by at most $N$ center manifolds with size no more than $R$. 
	\end{itemize}  
\end{prop}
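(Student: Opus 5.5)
The plan is to copy the structure of Subsections \ref{subsec 3.1} and \ref{subsec 3.2}, with the roles of the two foliations swapped. Here the stable bundle $E^s_F$ is deck-invariant because it depends only on forward orbits. The bundle that may depend on negative orbits is $E^c_F$, so "special" should mean that $\mcf^c_F$ commutes with the deck transformations. Set
\[
S:=\big\{x\in\RR^2\ |\ \mcf^c_F(x+n)=\mcf^c_F(x)+n,\ \forall\, n\in\ZZ^2\big\}.
\]
By the continuity of center leaves, $S$ is closed. It is $\ZZ^2$-periodic, and it is saturated by $\mcf^c_F$: if $x\in S$ and $y\in\mcf^c_F(x)$, then $\mcf^c_F(y+n)=\mcf^c_F(x)+n=\mcf^c_F(y)+n$. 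If $S=\RR^2$ then $f$ is special, so assume $W:=\RR^2\setminus S\neq\emptyset$.

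For $x_0\in W$, the argument of Claim \ref{3 claim y_0} and Claim \ref{3 claim int point} applies verbatim with $\mcf^u$ replaced by $\mcf^c$, using unique integrability of $E^c_F$ from the version of Proposition \ref{2 prop foliation on R2} for $s$-DA maps. It gives a point $y_0\in\mcf^c_F(x_0)$ where $E^c_F(y_0)$ and $D\mathcal{T}_{-n_0}E^c_F(y_0+n_0)$ are transverse. Hence $\pi(x_0)$ is an interior point of its $c$-accessibility class. Every projected center leaf through a point of $\pi(S)$ lifts into $S$, so $\pi(S)$ and $\pi(W)$ are each unions of accessibility classes, and every class inside $\pi(W)$ is open. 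Consequently, if $S=\emptyset$, connectivity of $\TT^2$ gives $c$-accessibility. It therefore remains to show that $S\neq\emptyset$ forces $S=\RR^2$.

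That step is the main obstacle. I would attack it as in Claim \ref{3 claim closure} and Lemma \ref{3 lemma xn}, via the semi-conjugacy $H$ to $A$ from Proposition \ref{2 prop semi-conj in R2}. For an $s$-DA map, $H$ should send $\mcf^s_F$ to $\tildeL^s$ and $\mcf^c_F$ to $\tildeL^u$, collapsing at most uniformly compact center segments. Fix $x_0\in S$. By Proposition \ref{2 prop semiconj on stable}, $H(\mcf^c_F(x_0)+n)$ lies within bounded distance of $\tildeL^u(H(x_0)+n)$. The union of these leaves over $n\in\ZZ^2$ is dense by Proposition \ref{2 prop minimal foliation}, so $\bar\Gamma\subset\overline{\mcf^c_F(x_0)+\ZZ^2}$. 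The limit argument of Lemma \ref{3 lemma xn} then puts $\bar\Gamma\subset S$. To reach all of $\RR^2$, I would argue that each connected component of $W$ projects to an open set that is saturated by projected center leaves. Its boundary lies in $\pi(S)$ and is again center-saturated. Meanwhile, the stable leaves through points of $\bar\Gamma$, whose $H$-image is dense, must cross any such open center-saturated region. Combining the density of $H(\mcf^s_F)$-leaves with the $\ZZ^2$-invariance of the leaves through $S$ should produce a point of $W$ whose lifted leaf coincides with its translates, a contradiction. If this direct route fails, the fallback is to rerun the proof of Proposition \ref{5 prop special to semi-conj} with $\mcf^c_F$ in place of $\mcf^u_F$, so that one $\ZZ^2$-invariant center leaf yields a $\ZZ^2$-equivariant $H$. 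An equivariant $H$ maps center leaves onto the deck-invariant $\tildeL^u$-foliation, and because $H$ is injective off compact center segments, this would force $\mcf^c_F$ to be deck-invariant, so $f$ is special.

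For the quantitative part, I would follow Subsection \ref{subsec 3.2} almost verbatim. Start from the open set $U=\pi(U_0)$ around $\pi(y_0)$, where two transverse families of projected local center leaves give $U\subset{\rm Acc}^c(y,2,R_0)$. The sets $\bigcup_{x\in U}{\rm Acc}^c(x,n,R_0)$ are open and increasing, so compactness yields $N_1$ with $\TT^2=\bigcup_{x\in U}{\rm Acc}^c(x,N_1,R_0)$. Then the density of projected $F$-orbits (Remark \ref{2 rmk universal to limit}) replaces each center plaque by a $C^1$-close plaque projected from $\RR^2$ of size at most $2R_0$. This gives $N=N_1+2$ and $R=2R_0$; connecting two points through the base point $y$ at most doubles $N$.
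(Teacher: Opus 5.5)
Your reduction is sound up to the step you flag yourself. $S$ is closed, $\ZZ^2$-periodic and $\mcf^c_F$-saturated, and points of $W$ are interior points of their classes. So everything hinges on showing that $S\neq\emptyset$ forces $S=\RR^2$, and that step is not proved. The ``direct route'' (components of $W$, stable leaves through $\bar\Gamma$ crossing them, which ``should produce'' a contradiction) is a heuristic, not an argument. The fallback asks you to redo all of Appendix~\ref{sec-app-a} with $\mcf^c_F$ in place of $\mcf^u_F$ and $\mcf^s_F$ as the transversal. That means re-checking the $F$-invariance of $\overline{\mathbf F}_0$, the well-definedness and uniform continuity of $\overline H$, and the $\bar\Gamma$-statements imported from \cite{GX2023} for DA maps, and you verify none of it. So $c$-accessibility is not established, and your quantitative part, which starts from it, inherits the gap.

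The missing idea is that for an $s$-DA map every center leaf is dense modulo $\ZZ^2$. This follows from ingredients you already cite. You have $\mcf^c_F(z)=H^{-1}\big(\tildeL^u(H(z))\big)$, and $H$ is injective on stable plaques, since its fibers are center arcs. Propositions~\ref{2 prop semiconj on stable} and \ref{2 prop minimal foliation} make $\bigcup_{n}\tildeL^u(H(z+n))$ dense, and this density pulls back:
\begin{itemize}
\item For an open $V$, pick a stable plaque $T^*\subset V$; then $H(T^*)$ is a nondegenerate $\tildeL^s$-segment.
\item Some line $\tildeL^u(H(z+n))$ crosses $H(T^*)$ at a point $H(w)$ with $w\in T^*$.
\item Then $w\in\mcf^c_F(z+n)\cap V$.
\end{itemize}
Mimicking Claim~\ref{3 claim closure} only gave you $\bar\Gamma\subset\overline{\mcf^c_F(x_0)+\ZZ^2}$. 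That weaker conclusion is what the DA case forces, because there $H$ collapses center arcs transverse to $\mcf^u_F$. Here the collapsed arcs lie inside the center leaves, so nothing is lost. With this, $x_0\in S$ gives $S\supset\overline{\mcf^c_F(x_0)+\ZZ^2}=\RR^2$ at once.

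The paper goes further and proves a uniform version. It chooses $k_0$ with $T+A^{k_0}\ZZ^2\subset H(U^*+A^{k_0}\ZZ^2)$, so that every center plaque of length $R_1$ meets $U^*+\ZZ^2$. This removes the need for $S$ and for the compactness and lifting steps of Subsection~\ref{subsec 3.2}. Start from one non-special point $w_0$ whose neighborhood $U$ carries two transverse families of projected center plaques. Any point reaches $U$ by one plaque of length at most $R_1$, and then reaches $w_0$ by two more, giving $N=3$ directly.
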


\begin{proof}[Proof of Proposition \ref{7  prop dichotomy}]
	It is clear that if $f$ is special, then $f$ is not $c$-accessible. 
	In the following, we show that the non-special property guarantees the $c$-accessibility, 
	and the quantitative estimation of the center paths. 
	This proof is a modification of the one for Anosov cases shown in \cite{GS22}. 
	
	Let $f$ be non-special, and fix $w_0\in\TT^2$ a non-special point. 
	By Remark \ref{2 rmk universal to limit}, 
	there exist two lifts $w^*, w_*\in\RR^2$ of $w_0$ such that $D\pi\big(E_F^c(w^*) \big)\neq D\pi\big(E_F^c(w_*) \big)$. 
	Let $U\subset \TT^2$ be a small neighborhood of $w_0$, 
	and denote its two lifts containing $w^*, w_*$ by $U^*, U_*\subset \RR^2$, respectively. 
	Denote the local center foliations of $U^*$ and $U_*$ by $\mcf^*$ and $\mcf_*$ respectively. 
	By the continuity of center bundle with respect to orbits, 
	the projections $\pi(\mcf^*)$ and $\pi(\mcf_*)$ are transverse in $U$ when $U$ is small enough. 
	Assume that the sizes of the leaf of $\pi(\mcf^*)$ and $\pi(\mcf_*)$ are both smaller than $R_0>0$.
	
	\begin{claim}\label{3  claim c is minimal}
		There exists $R_1>0$ such that for any $z \in\RR^2$,   
		$$\mcf_F^c(z,R_1) \cap \big( U^* +\ZZ^2 \big) \neq \emptyset.$$
		In particular, $\pi\big(  \mcf_F^c(z) \big)$ is dense in $\TT^2$.
	\end{claim}
	
	\begin{proof}[Proof of Claim \ref{3  claim c is minimal}]
		Let $H:\mrr$ be the semi-conjugacy between $F$ and $A$. 
		We claim that there is a constant $R_2>0$ such that for any $y\in\RR^2$,
		\begin{align}
			\tildeL^u(y,R_2)\cap H\big( U^*+\ZZ^2 \big) \neq \emptyset. \label{eq. 4. 1}
		\end{align}
		Indeed, let $T^*$ be a local  leaf of $\mcf_F^s$ contained in $U^*$. 
		By Proposition \ref{2 prop foliation on R2}, 
		$H$ restricted to each  leaf of $\mcf_F^s$ is homeomorphic to a leaf of $\tildeL^s$.  
		Hence $H(T^*)$ has a positive length. 
		It follows from both items of Proposition \ref{2 prop semiconj on stable} that 
		there are a curve $T\subset H(T^*)$ and $k_0\in\NN$ large such that 
		\[T+A^{k_0}\ZZ^2\subset H\big( U^*+A^{k_0}\ZZ^2\big). \]
		Then by Proposition \ref{2 prop minimal foliation}, 
		there exists a constant $R_2>0$ such that 
		$\big(\tildeL^u(y,R_2)+A^{k_0}\ZZ^2\big) \cap T\neq \emptyset$ 
		for every $y\in\RR^2$, and hence \eqref{eq. 4. 1} holds.
		
		In particular, for any $z\in\RR^2$,  
		we apply \eqref{eq. 4. 1} to $y=H(z)$ and get 
		\[ \tildeL^u\big(H(z),R_2\big)   \cap H\big( U^*+\ZZ^2 \big)\neq \emptyset.\]
		Since $H$ maps $\mcf^c_F$ to $\tildeL^u$ and its restriction to stable leaf is homeomorphic, 
		\[H^{-1} \big(   \tildeL^u\big(H(z),R_2\big)  \big) \subset \mcf^c_F(z,R_1),\] 
		where the constant $R_1>0$ is guaranteed by 
		the quasi-isometry of $\mcf^c_F$ and $H$ being bounded from Id$_{\RR^2}$. 
		Therefore we get $\mcf_F^c(z,R_1) \cap \big( U^* +\ZZ^2 \big) \neq \emptyset$, 
		by the homeomorphism $H|_{\mcf^s_F}$ again.		
	\end{proof}
	
	Let $x\in\TT^2$. By Claim \ref{3  claim c is minimal}, 
	there exists a lift $x_0'\in\RR^2$ of $x=x_0$ such that 
	$\pi\big(\mcf_F^c(x_0',R_1)\big)$ intersects with $U$. 
	Let $x_1\in \pi\big(\mcf_F^c(x_0',R_0)\big)\cap U$. 
	Then by the choice of $U$, 
	the local leaves $\pi(\mcf_*)(x_1)$ and $\pi(\mcf^*)(w_0)$ have intersection point $x_2$. 
	Namely, the lift $x'_1\in U_*$ of $x_1$ satisfies $x_2\in \pi\big( \mcf^c_F(x'_1,R_0) \big)\cap \pi(\mcf^*)(w_0)$. 
	Finally, let $x_2'\in U^*$ be a lift of $x_2$.  
	We have
	$x_3=w_0\in \pi\big( \mcf^c_F(x'_2,R_0) \big)$. 
	
	Hence, $R=\max\{R_0,R_1\}$ and $N=3$ are the constants satisfying Proposition \ref{7  prop dichotomy}.
\end{proof}

Then we consider the rigidity of $s$-DA maps. 
Note that the authors get the following result in \cite{GX2023}.
\begin{prop}[\!\cite{GX2023}\,]\label{7 prop special rigidty}
	Let $f:\mtt$ be a $C^{1}$-smooth 
	$s$-DA map with linearization $A:\mtt$.
	Then $f$ is semi-conjugate to $A$ via $h:\mtt$, if and only if, $f$ is special.
	In particular, each of them implies that 
	$\Lambda:=\overline{\big\{x\in\TT^2~ |~  h^{-1}\circ h(x)=\{x\}\big\}}$ is invariant,  
	$\overline{{\rm Per}(f|_{\Lambda})}=\Lambda$, 
	and $\lambda^s(p,f)\equiv\lambda^s(A)$ for any $p\in {\rm Per}(f|_{\Lambda})$. 
\end{prop}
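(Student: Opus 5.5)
The plan is to prove the two implications separately, working on the universal cover with the lifted semi-conjugacy $H:\mrr$ from Proposition \ref{2 prop semi-conj in R2} and using Remark \ref{2 rmk H to h}. For an $s$-DA map, the analogue of Proposition \ref{2 prop foliation on R2} holds with the roles exchanged. Concretely, $H$ maps each leaf of $\mcf^s_F$ homeomorphically onto a leaf of $\tildeL^s$, and $y\in\mcf^c_F(x)$ if and only if $H(y)\in\tildeL^u(H(x))$. In particular, center leaves are exactly the $H$-preimages of $\tildeL^u$-leaves.

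\emph{Semi-conjugacy implies special.} If $h$ exists, then $H(x+n)=H(x)+n$ for all $n\in\ZZ^2$. Using the preimage characterization,
\[
\mcf^c_F(x+n)=H^{-1}\big(\tildeL^u(H(x)+n)\big)=H^{-1}\big(\tildeL^u(H(x))\big)+n=\mcf^c_F(x)+n .
\]
So $E^c_F$ is invariant under deck transformations, and by Remark \ref{2 rmk universal to limit} $f$ is special.

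\emph{Special implies semi-conjugacy.} Let $d(x,n):=H(x+n)-n-H(x)$. By Proposition \ref{2 prop semiconj on stable}, $d(x,n)\in\tildeL^s$ and $|d|\le 2C$. The key steps are as follows.
\begin{enumerate}
\item Since $\mcf^c_F$ is $\ZZ^2$-invariant, $H$ maps $\mcf^c_F(x)+n$ onto the single line $\tildeL^u(H(x+n))$. For $z\in\mcf^c_F(x)$, the point $H(z+n)-n$ therefore lies on the fixed line $\tildeL^u(H(x+n))-n$, while $H(z)$ lies on $\tildeL^u(H(x))$. Hence the $\tildeL^s$-vector $d(z,n)$ is constant along center leaves.
\item The function $d(\cdot,n)$ is continuous and $\ZZ^2$-periodic. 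The argument of Claim \ref{3  claim c is minimal} (which uses only Propositions \ref{2 prop minimal foliation} and \ref{2 prop semiconj on stable}) shows that projected center leaves are dense in $\TT^2$. Consequently $d(x,n)=c_n$ is independent of $x$.
\item From $\mathcal{T}_n\circ\mathcal{T}_m=\mathcal{T}_{n+m}$ we get additivity $c_{n+m}=c_n+c_m$, and from $H\circ F=A\circ H$ together with $F(x+n)=F(x)+An$ we get $c_{An}=Ac_n$. Thus $c$ extends to a real-linear map $\RR^2\to \widetilde L^s$ commuting with $A$.
\item Such a map must send the $\mu_u$-eigenline to $0$, because $A$ acts on $\widetilde L^s$ only by $\mu_s$. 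Its kernel is therefore nonzero, and it is also the real span of a rational subspace (the kernel of $c$ on $\ZZ^2$). Since the eigenlines of the hyperbolic $A$ are irrational, the kernel is everything, so $c\equiv0$ and $H$ descends by Remark \ref{2 rmk H to h}.
\end{enumerate}

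\emph{Additional properties.} $\Gamma$ is $F$-invariant because $H\circ F=A\circ H$ and $F$ is a diffeomorphism of $\RR^2$; since $H$ now commutes with deck transformations, $\Gamma$ is $\ZZ^2$-periodic and projects to an $f$-invariant set with invariant closure $\Lambda$. For density of periodic points, I will show that preimages of $A$-periodic points under $h$ that are injection points are $f$-periodic. Every fiber $h^{-1}(q)$ is a point or a compact center arc, and its endpoints lie in $\Lambda$ and are periodic for $f$ because $f$ permutes these finitely many arcs over a periodic orbit of $A$. Density of ${\rm Per}(A)$ in $\TT^2$ then gives $\overline{{\rm Per}(f|_\Lambda)}=\Lambda$.

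\emph{Main obstacle.} The hardest step is the $C^1$ equality $\lambda^s(p,f)=\lambda^s(A)$ for $p\in{\rm Per}(f|_\Lambda)$, since $H$ is only a homeomorphism along stable leaves. Here I would follow the argument of \cite{AGGS23}. Take $n\in A^k\ZZ^2$; the translates $p+n$ lie on different center leaves, and by the unstable-holonomy-free structure they can be slid back to $\mcf^s_F(p)$. By Proposition \ref{2 prop semiconj on stable}(2), the $\tildeL^s$-displacement $H(p+n)-H(p)-n$ is of order $\mu_s^k$, and the stable displacement of $p+n$ itself after pulling back by $F^{-k}$ is governed by $\|Df^{km}|_{E^s}\|$. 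Comparing these two rates along the periodic orbit, using the compactness of the fibers of $H$ and the fact that $p$ is an injection point, should force the two exponents to coincide.
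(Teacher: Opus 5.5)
Your proof of ``semi-conjugate $\Rightarrow$ special'' and your Step~1 (constancy of $d(\cdot,n)$ along center leaves) are correct. Step~2, however, has a genuine gap: nothing you have proved makes $d(\cdot,n)$ $\ZZ^2$-periodic. One has
\[
d(x+m,n)-d(x,n)=H(x+m+n)-H(x+m)-H(x+n)+H(x),
\]
and the only identity available a priori is the cocycle relation $d(x,m+n)=d(x,m)+d(x+m,n)$. Through your Steps~2--4, periodicity for all $n$ is equivalent to $H\circ\mathcal{T}_n=\mathcal{T}_n\circ H$, which is the conclusion itself. Without periodicity, leafwise constancy only says that $d(\cdot,n)$ is a continuous function on the leaf space of $\mcf^c_F$, which is a line. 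Density of projected center leaves in $\TT^2$ then tells you nothing.

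This is the point where the special property has to be used quantitatively. In the analogous $u$-DA setting the paper does this in Appendix~\ref{sec-app-a} (following \cite{GX2023}), and the same scheme works here:
\begin{itemize}
\item Take $\ell_0=\mcf^c_F(z_0)$ with $F(z_0)=z_0$, and set $\overline H(y)=H(y-n)+n$ on $\ell_0+n$. This is well defined: $\ell_0+n=\ell_0$ forces the $\widetilde L^s$-component $\pi_s(n)$ of $n$ to vanish, hence $n=0$.
\item Prove transverse uniform continuity by a linear-drift estimate. If $\ell_0+n$ and $\ell_0+m$ pass $\delta$-close, then $\ell_0+l(n-m)$ stays $\lesssim l\delta$-close to $\ell_0$. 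On the other hand, $H(\ell_0)+l(n-m)$ is at $\widetilde L^s$-distance $l|\pi_s(n-m)|$ from $H(\ell_0)$. This contradicts $\|H-{\rm Id}\|<C$ unless $|\pi_s(n-m)|$ is small.
\item By the density of projected center leaves you already invoke, $\overline H$ extends continuously to $\RR^2$. It satisfies $\overline H\circ F=A\circ\overline H$ (since $F(\ell_0+n)=\ell_0+An$) and is $C$-close to the identity. Hence $\overline H=H$ by the uniqueness in Proposition~\ref{2 prop semi-conj in R2}.
\end{itemize}
Alternatively, your leaf-space picture can be completed with H\"older's theorem. $\ZZ^2$ acts freely and minimally on the center leaf space $\cong\RR$ by $\sigma_n(t)=t+\pi_s(n)+\epsilon_n(t)$, with $|\epsilon_n|\le 2C$ and $\sigma_{An}(\mu_s t)=\mu_s\sigma_n(t)$. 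The conjugacy $\Phi$ to $t\mapsto t+\pi_s(n)$ is at bounded distance from the identity and satisfies $\Phi(\mu_s t)=\mu_s\Phi(t)+b$. Together these force $\Phi$ to be a translation, i.e.\ $\epsilon\equiv 0$.

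There are three further points.
\begin{itemize}
\item In Step~4, the claim that the real kernel is spanned by integer vectors is false; consider $n\mapsto n_1+\sqrt2\,n_2$. It is also unnecessary: additivity and $|c_n|\le 2C$ give $k|c_n|=|c_{kn}|\le 2C$ for all $k$, so $c\equiv 0$.
\item Your last paragraph is not a proof of $\lambda^s(p,f)=\lambda^s(A)$. Once $H$ commutes with $\ZZ^2$, one has $H(p+n)-H(p)-n=0$ exactly. So the $\mu_s^k$ bound of Proposition~\ref{2 prop semiconj on stable} carries no information, and there is no second rate to compare with $\|Df^{km}|_{E^s}\|$.
\item ${\rm Per}(f|_\Lambda)$ contains periodic endpoints of nontrivial fibers, which are exactly what your density argument produces. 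These are not injection points, so you cannot assume $p\in\Gamma$. That such endpoints lie in $\Lambda$ also needs a short argument, e.g.\ Sierpi\'nski's theorem that an interval is not a countable disjoint union of two or more nonempty closed sets.
\end{itemize}
The missing ingredient for the exponent statement is a mechanism that turns the merely topological conjugacy $H|_{\mcf^s_F(p)}$, between $F|_{\mcf^s_F(p)}$ and multiplication by $\mu_s$, into an equality of exponents in $C^1$ regularity. This is what \cite{GX2023} takes from \cite{AGGS23}, and your sketch does not supply it.
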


Proposition \ref{7  prop dichotomy} and Proposition \ref{7 prop special rigidty} lead 
a general semi-conjugacy rigidity result of $s$-DA maps on $\TT^2$.

\begin{prop}\label{7 prop dich 2}
	Let $f:\mtt$ be a $C^1$-smooth $s$-DA map. 
	If $f$ is semi-conjugate to a $C^1$-smooth Anosov map $g:\mtt$ via $h:\mtt$, 
	then one has the following dichotomy:
	\begin{itemize}
		\item Either $f$ and $g$ are both special. 
		In this case, $\lambda^s(p,f)=\lambda^s(h(p),g)$ for each $f$-periodic point $p$ 
		on the closure of the set of $h$-injection points.
		
		\item Or $f$ and $g$ are both $c$-accessible. 
		In this case, $h$ is a homeomorphism.
	\end{itemize}
\end{prop}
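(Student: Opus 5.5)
We apply the dichotomy of Proposition \ref{7  prop dichotomy} to $f$ and transfer each alternative to $g$ through the lift $H$, as in the proof of Corollary \ref{2 cor both special}. Let $F,G$ be lifts of $f,g$ and $H$ a lift of $h$ with $H\circ F=G\circ H$; then $H(x+n)=H(x)+n$ for $n\in\ZZ^2$, and $f_*=g_*=A$. For $s$-DA maps, Proposition \ref{2 prop foliation on R2} holds with $\mcf^u_F$ replaced by $\mcf^s_F$. So $H$ maps $\mcf^s_F$ homeomorphically onto $\mcf^s_G$, and it maps center leaves of $F$ onto unstable leaves of $G$, monotonically. The point sets $H^{-1}(y)$ are points or uniformly compact center arcs.

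\emph{Special case.} If $f$ is special, then $\mcf^c_F(x+n)=\mcf^c_F(x)+n$. By the equivariance of $H$ and $H(\mcf^c_F(x))=\mcf^u_G(H(x))$, the computation in the proof of Corollary \ref{2 cor both special} gives $\mcf^u_G(y+n)=\mcf^u_G(y)+n$, so $g$ is special. A special Anosov map on $\TT^2$ is conjugate to its linearization $A$ by some $h_0$, and then $h_1:=h_0\circ h$ semi-conjugates $f$ to $A$.

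Proposition \ref{7 prop special rigidty} then gives that $\Lambda$, the closure of the $h_1$-injection points, is invariant, and that $\lambda^s(p,f)=\lambda^s(A)$ for $p\in{\rm Per}(f|_\Lambda)$. Since $h_0$ is a homeomorphism, the $h_1$-injection points are exactly the $h$-injection points. Applying Theorem \ref{2 thm rigidity special} to the special Anosov map $g$ and $h_0$ gives $\lambda^s(q,g)=\lambda^s(A)$ for every $q\in{\rm Per}(g)$. Combining the two, $\lambda^s(p,f)=\lambda^s(h(p),g)$ for each periodic $p$ in $\Lambda$.

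\emph{Accessible case.} If $f$ is non-special, then $f$ is $c$-accessible. The images under $H$ of the connecting center leaves are unstable leaves of $G$, so $g$ is $u$-accessible; this also shows that $g$ is not special. Injectivity of $h$ then follows as in Lemma \ref{4 lem conjugacy}. Suppose $H$ is not injective, and let $\Gamma$ be its set of injection points. Then $\bar\Gamma$ is nonempty, $\ZZ^2$-periodic, and saturated by center leaves; the last property is the analogue of the unstable saturation in Proposition \ref{2 prop foliation on R2}, with $u$ replaced by $c$. Now take $x\in\Gamma$ and $y\notin\bar\Gamma$, and connect them by finitely many projected center leaves using the quantitative part of Proposition \ref{7  prop dichotomy}. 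By periodicity and saturation, $y+n^*\in\bar\Gamma$ for some $n^*\in\ZZ^2$, hence $y\in\bar\Gamma$, a contradiction. Therefore $H$ is injective and $h$ is a homeomorphism.

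\emph{Main obstacle.} The hardest step is confirming that the $s$-DA version of Proposition \ref{2 prop foliation on R2} includes the $c$-saturation and the $\ZZ^2$-periodicity of $\bar\Gamma$ when the factor $g$ is a general Anosov map rather than $A$. I would argue as in the paper: write $H=H_G^{-1}\circ H_F$, where $H_F$ and $H_G$ are the semi-conjugacies of $F$ and $G$ to $A$ from Proposition \ref{2 prop semi-conj in R2}. Since $G$ is Anosov, $H_G$ is a homeomorphism preserving the invariant foliations, so both properties are inherited from the known case $g=A$. The periodicity argument from the end of the proof of Proposition \ref{2 prop foliation on R2} carries over verbatim, because it uses only that $H$ commutes with the deck transformations.
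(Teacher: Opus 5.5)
Your special case is fine. Composing $h$ with the conjugacy $h_0$ from the special Anosov map $g$ to $A$, and combining Proposition~\ref{7 prop special rigidty} for $f$ with Theorem~\ref{2 thm rigidity special} for $g$, correctly fills in what the paper calls immediate.

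\textbf{The gap is in the accessible case.} You assert that $\bar\Gamma$ is saturated by \emph{center} leaves, and this is not the $s$-DA analogue of Proposition~\ref{2 prop foliation on R2}. There, $\bar\Gamma$ is saturated by the foliation that $H$ maps leafwise homeomorphically and whose holonomy $H$ intertwines ($\mcf^u_F$ for DA maps), while the fibres $H^{-1}(y)$ are center arcs. For $s$-DA maps this gives saturation by $\mcf^s_F$, not by $\mcf^c_F$; the paper says exactly that the $s$-DA version is obtained by replacing $\mcf^u_F$ with $\mcf^s_F$.

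In fact center-saturation is false in general, and it is equivalent to the injectivity you want to prove. All fibres of $H$ lie in center leaves, and $H|_{\mcf^c_F(x)}$ is a monotone surjection onto $\mcf^u_G(H(x))$. So it collapses at most countably many arcs, and every center leaf meets $\Gamma$. Center-saturation would then force $\bar\Gamma=\RR^2$. Since $H$ is not locally constant along center leaves at points of $\bar\Gamma$, $H$ would be injective for \emph{every} $s$-DA map semi-conjugate to an Anosov map, with no use of accessibility.

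This already fails for special maps. Take a perturbation of $A$ as in Proposition~\ref{5 prop example}, but acting on the $L^u$-coordinate so as to turn a fixed point into a sink. It keeps the linear foliation $\mcl^u$ invariant, so it is a special $s$-DA map, semi-conjugate to $A$ by Proposition~\ref{7 prop special rigidty}, yet not conjugate to $A$. The mechanism of Lemma~\ref{4 lem conjugacy} needs the accessibility paths to run along the same foliation that saturates $\bar\Gamma$. For $s$-DA maps the paths run along $\mcf^c_F$, which is exactly the direction $H$ collapses. So the step you flag as the main obstacle cannot be settled by writing $H=H_G^{-1}\circ H_F$: what that decomposition transfers is $\mcf^s_F$-saturation, not $\mcf^c_F$-saturation.

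\textbf{What works instead (the paper's route)} is to show that non-injectivity forces $f$ to be special. Since $H$ commutes with deck transformations, $\Gamma+\ZZ^2=\Gamma$. If $I=H^{-1}\circ H(x)$ is a nontrivial center arc, then $I+n=H^{-1}\circ H(x+n)\subset\mcf^c_F(x+n)$ is again a collapsed center arc (Claim~\ref{4 claim 1}). Hence $D\mathcal{T}_nE^c_F(y)=E^c_F(y+n)$ at every non-injective point $y$. Together with density of the non-injective points in each center leaf, continuity gives $\ZZ^2$-invariance of $E^c_F$ everywhere. So $f$ is special, contradicting $c$-accessibility. This uses accessibility only through the dichotomy, not through connecting paths.
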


\begin{proof}[Proof of Proposition \ref{7 prop dich 2}]
	The proof of $f$ and $g$ being special or $c$-accessible simultaneously is 
	the same as the proof of Corollary \ref{2 cor both special}. 
	And the rigidity part of the special case follows from Proposition \ref{7 prop special rigidty}, immediately. 
	Now we assume that $f$ is $c$-accessible and we prove that the semi-conjugacy $h$ is in fact a homeomorphism.
	
	By contradiction, let $h$ be non-injective. 
	Let $F, G$ and $H:\mrr$ be lifts of $f, g$ and $h$, respectively. 
	Note that $H$ is commutative with the deck transformations. 
	It follows that the set of $H$-injective points 
	\[\Gamma=\{ w\in\RR^2\ |\  H^{-1}\circ H(w)=\{w\}\}, \]
	satisfies $\Gamma+\ZZ^2=\Gamma$. 
	Indeed, let $w\in\Gamma$, $n\in\ZZ^2$ and $y\in H^{-1}\circ H(w+n)$. 
	Then $H(y)=H(w+n)=H(w)+n$ and hence $H(y-n)=H(w), y=w+n\in\Gamma$.
	By the assumption that $h$ is not injective, $\RR^2\setminus \Gamma$ is non-empty.
	
	\begin{claim}\label{4 claim 1}
		Let $x\in\RR^2$. If $I=H^{-1}\circ H(x)\subset \mcf^c_F(x)$ is a center curve with a positive length.  
		Then $(I+n)\subset (\RR^2\setminus \Gamma) \cap \mcf^c_F(x+n)$ and $H^{-1}\circ H(I+n)=I+n$, for any $n\in\ZZ^2$.
	\end{claim}
	\begin{proof}[Proof of Claim \ref{4 claim 1}]
		It follows from $\Gamma+\ZZ^2=\Gamma$ that $I\subset \RR^2\setminus \Gamma$ 
		if and only if 
		$(I+n)\subset \RR^2\setminus \Gamma$. 
		Note that $H(I+n)=H(I)+n=H(x)+n=H(x+n)$. 
		Hence $I+n\subset H^{-1}\circ H(x+n)$. 
		By Proposition \ref{2 prop foliation on R2}, $(I+n)\subset H^{-1}\circ H(x+n)\subset \mcf^c_F(x+n)$. 
		Hence we get that $(I+n)\subset(\RR^2\setminus \Gamma) \cap \mcf^c_F(x+n)$.
		
		Now we take $J=H^{-1}\circ H(x+n)\subset  \mcf^c_F(x+n)$. 
		Since $(I+n)\subset J$, $J$ has a positive length and $J\subset (\RR^2\setminus \Gamma)$. 
		Applying the above proof to $(x+n)$, $J$ and $-n$,  
		$(J-n)\subset H^{-1}\circ H(x)=I$. Hence $J=I+n$.
	\end{proof}
	
	For any $x\in\RR^2$, denote $ \mcf^c_F(x)\cap (\RR^2\setminus \Gamma)$ by $I(x)$.  
	Notice that the set $\RR^2\setminus \Gamma$ is dense in $\RR^2$.
	Then $I(x)$ is dense in $\mcf^c_F(x)$. 
	Note that $I(x)$ is the union of countably many closed curves $I_k$ with $H^{-1}\circ H(I_k)=I_k$. 
	Let $x_k\in I_k$ and $n\in\ZZ^2$. 
	Then by Claim \ref{4 claim 1},  
	$I_k+n=H^{-1}\circ H(x_k+n)\subset \mcf^c_F(x_k+n)$. 
	It follows that $E^c_F(y+n)=D\mathcal{T}_nE^c_F(y)$ for every $y\in I(x)$, 
	so for all $y\in \mcf^c_F(x)$ by the denseness of $I(x)$ in $\mcf^c_F(x)$.  
	Hence $\mcf^c_F(x)+n= \mcf^c_F(x+n)$ for any $x\in\RR^2$ and $n\in\ZZ^2$. 
	Therefore, $f$ is special. 
	This contradicts the assumption of $c$-accessibility, 
	and ends the proof of Proposition \ref{7 prop dich 2}.
\end{proof}

\section{DA diffeomorphism on $\mathbb{T}^3$}
\label{sec-app-c}

In this appendix, we show Theorem \ref{thm appendix DA} 
i.e., the semi-conjugacy rigidity of DA diffeomorphisms on $\TT^3$. 
Recall that $\phi:\TT^3\to \TT^3$ is a diffeomorphism 
homotopic to an Anosov diffeomorphism $\psi:\TT^3\to \TT^3$. 
It is known that there is a semi-conjugacy $h:\TT^3\to \TT^3$, 
such that $h\circ \phi=\psi\circ h$ \cite{Franks1969}. 
We assume that they admit partially hyperbolic splittings:
\[T\TT^3=E^s_\phi\oplus E^c_\phi\oplus E^u_\phi\quad {\rm and}\quad  T\TT^3=E^{s}_\psi\oplus E^{c}_\psi\oplus E^u_\psi,\]
where $E^c_\psi$ and $E^s_\psi$ are weak stable and strong stable bundles of $\psi$. 
Denote $E^{cs/cu}_\sigma=E^c_\sigma\oplus E^{s/u}_\sigma$ for $\sigma=\phi, \psi$.
We prove that if the semi-conjugacy $h$ maps the foliation $\mcf_\phi^s$ to $\mcf_\psi^s$, 
i.e., $h(\mcf_\phi^{s}(x))=\mcf_\psi^{s}(h(x))$ for all $x\in\TT^3$, 
then $\phi$ is an Anosov diffeomorphism, and $h$ is a conjugacy smooth along each center manifold.

First, recall the following properties of $DA$ diffeomorphisms on $\TT^3$.
\begin{prop}[\cite{Potrie2015,HS21}]\label{prop appendix DA topo}
	Let $\phi$ and $\psi$ be $C^1$-smooth. Then $\phi$ and $\psi$ are coherent, i.e., there are foliations $\mcf_{\sigma}^{cs/c/cu}$ tangent to bundles $E^{cs/c/cu}_\sigma$, for $\sigma=\phi, \psi$. Moreover, the semi-conjugacy $h:\TT^3\to \TT^3$ satisfies that
	\begin{enumerate}
		\item For $\tau=u,c,cs,cu $, $h$ maps foliation $\mcf^{\tau}_\phi$ to foliation $\mcf^{\tau}_\psi$.
		\item For every $x\in\TT^3$, the restriction $h|_{\mcf^c_\phi}:\mcf^c_\phi(x)\to \mcf^c_\psi(h(x))$ is monotonic.
		\item For every $x\in\TT^3$, the set $h^{-1}\circ h(x)$ is 
		either a single point or a uniformly compact local center leaf.
	\end{enumerate}
\end{prop}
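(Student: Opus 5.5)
The statement to prove is Proposition~\ref{prop appendix DA topo}, which collects known facts about DA diffeomorphisms on $\TT^3$. I would reduce it to the semi-conjugacy with the linear part $B$, in the same way the proof of Proposition~\ref{2 prop foliation on R2} reduced the case of a general Anosov map $g$ to the linear case $A$.

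\textbf{Coherence.} I would take this from the literature. For $\phi$ partially hyperbolic and homotopic to a hyperbolic automorphism of $\TT^3$, Potrie \cite{Potrie2015} shows that $E^{cs}_\phi$ and $E^{cu}_\phi$ are tangent to invariant foliations $\mcf^{cs}_\phi,\mcf^{cu}_\phi$. The center foliation is then defined as
\[
\mcf^c_\phi(x)=\mcf^{cs}_\phi(x)\cap\mcf^{cu}_\phi(x).
\]
Lifted to $\RR^3$, these foliations are quasi-isometric. Each lifted leaf lies within bounded distance of the corresponding linear leaf $\widetilde{L}^{cs/c/cu}_B$. The lifted foliations also have global product structure. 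The same facts apply to $\psi$, viewed as a DA diffeomorphism that happens to be Anosov.

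\textbf{Semi-conjugacy and foliations.} Write $H_\phi$ and $H_\psi$ for the Franks semi-conjugacies of the lifts to $B$, which are bounded distance from the identity. Since $\psi$ is Anosov, $H_\psi$ is a homeomorphism, so the lift of $h$ is $H=H_\psi^{-1}\circ H_\phi$; this is the analogue of the identity $H=H_G^{-1}\circ H_F$ used earlier. For $\tau=cs$, $H_\phi$ sends a $\tau$-leaf of $\phi$ into the unique $B$-leaf of type $\tau$ at bounded distance. This follows by the usual shadowing/forward-iteration argument: points in one $cs$-leaf of $\phi$ have forward orbits whose $B$-images separate at most at the $cs$ rate, so they lie in one $\widetilde{L}^{cs}_B$-leaf. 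The case $\tau=cu$ uses backward orbits, and the $cs$ and $cu$ statements together give $\tau=c$. For $\tau=u$, I would use that the strong unstable direction of $\phi$ expands at least as fast as the dominating rate, so $u$-leaves are sent into $\widetilde{L}^u_B$-leaves. The same argument applies to $H_\psi$, and composing gives item (1).

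\textbf{Monotonicity and fibers.} This step is the main obstacle. On a center leaf, $H_\phi$ maps a line to a line and is proper, since it is bounded distance from the identity and the center foliation is quasi-isometric. To get monotonicity I would follow \cite{HS21,U12}. The sets $H_\phi^{-1}(y)$ are $\phi$-equivariant and, by the product structure, are contained in a single center leaf. If a fiber were not connected, some segment between two of its points would have image a nontrivial arc. Iterating this arc produces a contradiction with the bounded distance of $H_\phi$ from the identity: the image arc is expanded or contracted by $B$ along $\widetilde{L}^c_B$, while the preimage segment stays confined. It follows that fibers are intervals and $H_\phi|_{\mcf^c}$ is monotone. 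Uniform compactness of the fibers comes from the inclusion $H^{-1}(y)\subset B_C(y)\cap\mcf^c_\phi$ together with quasi-isometry of $\mcf^c_\phi$. Finally, composing with the homeomorphism $H_\psi^{-1}$, which preserves center leaves, transfers items (2) and (3) to $h$.
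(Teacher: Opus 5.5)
Your outline is correct and is essentially the standard argument behind the references the paper invokes here; the paper itself gives no proof beyond \cite{Potrie2015,HS21}. As you propose, one reduces to $B$ via $H=H_\psi^{-1}\circ H_\phi$, imports coherence, almost-parallelism, quasi-isometry and global product structure from Potrie, and obtains monotonicity and uniformly compact fibers by backward iteration along center leaves, and that last step works as you describe.

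Three justifications should be tightened:
\begin{enumerate}
\item The $cs/cu$ inclusions in item (1) come from the $\widetilde{\phi}^{\pm n}$-images of these leaves staying at bounded distance from translates of $L^s_B\oplus L^c_B$ and $L^c_B\oplus L^u_B$. They do not come from derivative rates, since $E^c_\phi$ may expand.
\item The $u$ inclusion comes from backward contraction of $u$-leaves together with $\dim L^u_B=1$, not from forward expansion rates.
\item Placing a whole fiber in one center leaf should use the uniform (bounded-holonomy) product structures of $(\mcf^{cs},\mcf^u)$ and $(\mcf^{cu},\mcf^s)$, applied to fiber points, which stay $2C$-close under all iterates. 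Arguing via images of strong leaves fails on the $cu$ side, because $H_\phi$ need not send $\mcf^s_\phi$ into linear strong stable leaves.
\end{enumerate}
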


We mainly get a rigidity of the semi-conjugacy for the $su$-accessible case.
\begin{prop}\label{prop appendix DA rigidity}
	Assume that  both $\phi$ and $\psi$ are $C^1$-smooth and $su$-accessible.  
	If $h$ maps the foliation $\mcf_\phi^s$ to $\mcf_\psi^{s}$. 
	Then, $h$ is actually a conjugacy and $\phi$ is an Anosov diffeomorphism. 
	Moreover, if both $\phi$ and $\psi$ are $C^{1+\alpha}$-smooth, the restriction $h|_{\mcf^c_\phi}$ is smooth.
\end{prop}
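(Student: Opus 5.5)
The plan follows the three-step scheme of Subsection \ref{subsec C1 case}: show that the semi-conjugacy is injective, then that it is bi-H\"older along center leaves, and finally that the H\"older conjugacy forces uniform contraction along $E^c_\phi$. The $C^{1+\alpha}$ part will copy Subsection \ref{subsec C1+ case}.

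\textbf{Injectivity.} Lift everything to $\RR^3$ and let $H$ be the lift of $h$, which commutes with $\ZZ^3$. Let $\Gamma=\{x: H^{-1}\circ H(x)=\{x\}\}$. As in the proof of Proposition \ref{2 prop foliation on R2}, $\Gamma+\ZZ^3=\Gamma$.

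The key point is that $\bar\Gamma$ is saturated by both $\mcf^u_\phi$ and $\mcf^s_\phi$. For $\mcf^u$ this follows from Proposition \ref{prop appendix DA topo}: $h$ maps $\mcf^u_\phi$ to $\mcf^u_\psi$ and $\mcf^c_\phi$ to $\mcf^c_\psi$, and $\mcf^u$-holonomy inside a $cu$-leaf conjugates the monotone maps $h|_{\mcf^c}$ on different center leaves. Hence a nontrivial fiber $h^{-1}h(y)$ with $y\in\mcf^u(x)$ transports to a nontrivial fiber through $x$, and $x\in\Gamma$ gives $\mcf^u(x)\subset\Gamma$.

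For $\mcf^s$ I use the extra hypothesis that $h$ maps $\mcf^s_\phi$ to $\mcf^s_\psi$. Combined with the preservation of $\mcf^{cs}$, the same holonomy argument applies inside $cs$-leaves: $\mcf^s_\psi$-holonomy between $\psi$-center leaves conjugates the restricted maps $h|_{\mcf^c}$. This is exactly where the hypothesis is needed. Without it, $h$ could collapse center segments on one center leaf but not on the strong-stable-related ones.

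With both saturations in hand, $su$-accessibility of $\phi$ gives $\bar\Gamma=\RR^3$ as soon as $\Gamma\neq\emptyset$. Here $\Gamma\neq\emptyset$ because the nontrivial fibers are at most countably many per center leaf. On the other hand, $\bar\Gamma$ misses the interior of any nontrivial fiber. So all fibers are points and $h$ is a homeomorphism.

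\textbf{H\"older regularity and hyperbolicity.} Next I show that $h|_{\mcf^c_\phi}$ and its inverse are uniformly H\"older. The restrictions of $h$ to $\mcf^s$ and $\mcf^u$ are bi-H\"older by the standard argument of \cite[Theorem 19.1.2]{KH95}, and the $s$- and $u$-holonomies are uniformly H\"older (Proposition \ref{2 prop Holder foliation} analogue). A monotone homeomorphism of a center leaf has a Lipschitz point. I then propagate H\"older continuity from that point along $su$-paths, as in Claims \ref{4 claim Holder Z2} and \ref{4 claim Holder uleaf}, writing
$$h|_{\mcf^c(y)}={\rm Hol}^{\psi}\circ h|_{\mcf^c(x)}\circ {\rm Hol}^{\phi}.$$
The step I expect to be the main obstacle is getting \emph{uniform} constants. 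This needs a quantitative $su$-accessibility: a bounded number $N$ of legs of bounded size $R$. I would obtain it by the compactness argument of Claim \ref{3 claim N0R0} (after \cite{D2003}), using that accessibility classes are open. Since $C^1$ $su$-accessibility gives open classes directly, the step should go through.

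With $h^{-1}|_{\mcf^c_\psi}$ H\"older with exponent $\alpha$, and $\psi$ contracting $E^c_\psi$, the computation at the end of Subsection \ref{subsec C1 case} gives $\lambda^c(p,\phi)\le\alpha\lambda^c_\psi<0$ for every periodic point $p$. Transitivity and shadowing are inherited from $\psi$ through the conjugacy. Then the standard argument of \cite[Claim 2.20]{AGGS23} gives uniform contraction of $E^c_\phi$, so $\phi$ is Anosov.

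\textbf{$C^{1+\alpha}$ case.} Here the $s$- and $u$-holonomies within $cs$/$cu$ leaves are uniformly $C^1$. The differentiable point of the monotone map $h|_{\mcf^c}$ then spreads, via the chain argument of Claims \ref{4 claim diff trans} and \ref{4 claim diff everywhere}, to derivatives bounded above and below everywhere. A point of continuity of the derivative, propagated in the same way, gives that $h|_{\mcf^c_\phi}$ is a $C^1$ diffeomorphism. Equal center periodic data plus \cite{dL92} then upgrades this to smoothness.
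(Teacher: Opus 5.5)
Your $C^1$ part is the paper's proof. Injectivity comes from $su$-saturation of the injectivity set together with $su$-accessibility. Bi-H\"older continuity comes from a Lipschitz point of the monotone map $h|_{\mcf^c_\phi}$, spread by H\"older $s$/$u$-holonomies inside $cs$/$cu$-leaves and quantitative accessibility \cite{D2003}. Hyperbolicity comes from the exponent comparison of Subsection~\ref{subsec C1 case}. Your holonomy argument already shows that $\Gamma$ itself, not just $\bar\Gamma$, is $\mcf^s_\phi$- and $\mcf^u_\phi$-saturated. Accessibility then gives $\Gamma=\RR^3$ directly, as in the paper. So you can drop the detour through $\bar\Gamma$ and the unproved claim that $\bar\Gamma$ avoids the interiors of nontrivial fibers.

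You genuinely diverge in the $C^{1+\alpha}$ part. The paper does not redo Subsection~\ref{subsec C1+ case} in dimension three. Once $\phi$ is Anosov and $h$ is a conjugacy preserving $\mcf^s$, it quotes the main theorem of \cite{GY24}. That theorem gives $C^{1+\text{H\"older}}$ regularity along center leaves and, as quoted, needs no accessibility.

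You instead transplant the chain argument of Claims~\ref{4 claim diff trans} and~\ref{4 claim diff everywhere}. This is legitimate here because $su$-accessibility is assumed, and it makes the proof self-contained and parallel to the surface case. The price is two inputs that dimension two gives for free. You should state and justify them rather than call them an analogue of Proposition~\ref{2 prop Holder foliation}.
\begin{enumerate}
\item For $C^{1+\alpha}$ systems, the $\mcf^s$-holonomies inside $cs$-leaves and the $\mcf^u$-holonomies inside $cu$-leaves must be uniformly $C^1$, with derivatives varying continuously. This is a center-bunching result of Pugh--Shub--Wilkinson/Burns--Wilkinson type, automatic here because $E^c$ is one-dimensional, but it has to be invoked.
\item The final upgrade from $C^1$ to $C^{1+\text{H\"older}}$ takes place along the weak-stable center leaves of $\psi$. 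The setting of \cite{dL92} is the full one-dimensional stable foliation of a surface map, so you need to check that its argument transfers. It does: use nonstationary linearization along the uniformly contracted $C^{1+\alpha}$ center leaves, together with a uniform modulus of continuity for $Dh|_{E^c}$. Your bounded-length $su$-chains supply that uniform modulus, provided the holonomy derivatives are uniformly equicontinuous.
\end{enumerate}
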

\begin{proof}[Proof of Proposition \ref{prop appendix DA rigidity}]
	The proof is similar to the proof of Theorem \ref{2 thm rigidity accessible}. 
	\begin{claim}\label{claim appendix DA homeo}
		The semi-conjugacy $h$ is indeed a conjugacy.
	\end{claim}
\begin{proof}[Proof of Claim \ref{claim appendix DA homeo}]
	The proof is similar but simpler than Lemma \ref{4 lem conjugacy}. 
	Let $\Lambda$ be the set of $h$-injection point. 
	It follows from the third item of Proposition \ref{prop appendix DA topo} that $\Lambda$ is nonempty.  
	Since $h$ maps foliations $\mcf^{s/u}_\phi$ to $\mcf^{s/u}_\psi$ 
	and $h^{-1}\circ h(x)$ lies in $\mcf^{c}_\phi(x)$, 
	the set $\Lambda$ is saturated by foliations $\mcf^s_\phi$ and $\mcf^u_\phi$. 
	By $su$-accessibility, $\Lambda=\TT^3$.
\end{proof}

\begin{claim}\label{claim appendix DA Holder}
	The conjugacy $h$ is bi-H\"older continuous.
\end{claim}
\begin{proof}[Proof of Claim \ref{claim appendix DA Holder}]
	The proof is similar to Lemma \ref{4 lem Holder conjugacy}.  Note that: 
	\begin{enumerate}[label=(\roman*)]
		\item The restriction $h|_{\mcf^{s/u}_\phi}:\mcf^{s/u}_\phi(x)\to \mcf^{s/u}_\psi(h(x))$ is 
			uniformly H\"older continuous \cite{KH95}.
		
		\item On each leaf $\mcf^{cs/cu}_{\sigma}(x)$, the holonomy of stable/unstable foliation $\mcf^{s/u}_\sigma$ is 	
			well-define and uniformly H\"older continuous, for $\sigma=\phi, \psi$, 
			since the invariant bundles are H\"older continuous \cite{Pesinbook04}.
		\item For any two points on $\TT^3$, one can connect them 
			by finitely many curves tangent to $E^s_\phi$ or $E^u_\phi$ with uniformly bounded lengths \cite{D2003}.
	\end{enumerate} 
	Since the restriction of $h$ along a center leaf is a monotonic homeomorphism, 
	there is a Lipschitz point $x_0$ of $h_{\mcf^c_\phi}$. 
	Since $h$ now preserves all foliations, the items (ii) and (iii) 
	allow us to send the H\"older continuity of $h_{\mcf^c_\phi}$ at $x_0$ to everywhere with uniform H\"older constants. Combining with the item (i), $h$ is H\"older continuous. 
	By the same argument, $h^{-1}$ is also H\"older continuous.
\end{proof}

From the same proof for the Anosov part of Subsection \ref{subsec C1 case}, we also get that $\phi$ is Anosov here.

The conclusion for the $C^{1+\alpha}$-regularity case follows from the main theorem of \cite{GY24}, 
where the authors showed that
when $\phi$ and $\psi$ are $C^{1+\alpha}$-smooth Anosov and the conjugacy $h$ preserves the strong stable foliation,  
$h$ is $C^{1+\text{H\"older}}$-smooth along each center manifold. 	Here, the H\"older exponent depends on the regularity of center manifolds. This ends the proof of Proposition \ref{prop appendix DA rigidity}.
\end{proof}

\begin{proof}[Proof of Theorem \ref{thm appendix DA}]
	Recall that by Theorem \ref{thm DA diffeo}, 
	$\phi$ is either $su$-accessible, or $su$-integrable.  
	Since $h$ maps $\mcf^{s/u}_\phi$ to $\mcf^{s/u}_\psi$, 
	$\phi$ and $\psi$ are simultaneously $su$-accessible or $su$-integrable. 
	In particular, the conclusion follows from Proposition \ref{prop appendix DA rigidity} for the $su$-accessible case, 
	and from Theorem \ref{thm DA diffeo} for the $su$-integrable case.
\end{proof}

\bibliographystyle{plain}
\bibliography{Bib-DAnoAF}

\end{document}